\numberwithin{equation}{section}
\newtheorem{thm}{Theorem}[section]
\newtheorem{lem}[thm]{Lemma}
\newtheorem{rem}[thm]{Remark}
\newtheorem{ass}[thm]{Assumption}
\newcommand{\be}{\begin{equation}}
\newcommand{\ee}{\end{equation}}
\newcommand{\bea}{\begin{eqnarray*}}
\newcommand{\eea}{\end{eqnarray*}}
\newcommand{\Rmnum}[1]{\expandafter\@slowromancap\romannumeral #1@}
\begin{document}

\begin{frontmatter}
\title{Canonical correlation coefficients of high-dimensional normal vectors: finite rank case}
\runtitle{High-dimensional canonical correlation}

\begin{aug}
\author{\fnms{Zhigang} \snm{Bao}\thanksref{t1}\ead[label=e1]{maomie2007@gmail.com}},
\author{\fnms{Jiang} \snm{Hu}\thanksref{t4}\ead[label=e4]{huj156@nenu.edu.cn}},
\author{\fnms{Guangming} \snm{Pan}\thanksref{t2}\ead[label=e2]{gmpan@ntu.edu.sg}}
\and
\author{\fnms{Wang} \snm{Zhou}\thanksref{t3}
\ead[label=e3]{stazw@nus.edu.sg}
\ead[label=u1,url]{http://www.sta.nus.edu.sg/~stazw/}}

\thankstext{t1}{Z.G. Bao was partially supported by the Ministry of Education, Singapore, under grant \# ARC 14/11}
\thankstext{t4}{J. Hu  was partially supported by CNSF 11301063}
\thankstext{t2}{G.M. Pan was partially supported by the Ministry of Education, Singapore, under grant \# ARC 14/11}
\thankstext{t3}{ W. Zhou was partially supported by the Ministry of Education, Singapore, under grant \# ARC 14/11,  and by a grant
R-155-000-116-112 at the National University of Singapore.}
\runauthor{Z.G. Bao et al.}

\affiliation{Nanyang Technological University\\ Northeast Normal
University \\and National University of
 Singapore}

\address{Division of Mathematical Sciences, \\School of Physical and Mathematical Sciences,\\ Nanyang Technological University, \\Singapore 637371\\
\printead{e1}\\
\phantom{E-mail:\ }}

\address{KLASMOE and School of Mathematics $\And$ Statistics\\ Northeast Normal
University \\P. R. China 130024\\
\printead{e4}\\
\phantom{E-mail:\ }}

\address{Division of Mathematical Sciences, \\School of Physical and Mathematical Sciences,\\ Nanyang Technological University, \\Singapore 637371\\
\printead{e2}\\
\phantom{E-mail:\ }}

\address{Department of Statistics and Applied Probability,\\ National University of
 Singapore,\\ Singapore 117546\\
\printead{e3}\\
\printead{u1}}
\end{aug}

\begin{abstract}
Consider a normal vector $\mathbf{z}=(\mathbf{x}',\mathbf{y}')'$, consisting of two sub-vectors $\mathbf{x}$ and $\mathbf{y}$ with dimensions $p$ and $q$ respectively. With $n$ independent observations of $\mathbf{z}$ at hand, we study the correlation between $\mathbf{x}$ and $\mathbf{y}$, from the perspective of the Canonical Correlation  Analysis, under the high-dimensional setting: both $p$ and $q$ are proportional to the sample size $n$. Denote by $\Sigma_{\mathbf{u}\mathbf{v}}$ (resp. $S_{\mathbf{u}\mathbf{v}}$) the population (resp. sample) cross-covariance matrix of any random vectors $\mathbf{u}$ and $\mathbf{v}$ (resp. their $n$ independent samples). The canonical correlation coefficients between $\mathbf{x}$ and $\mathbf{y}$ are known as the square roots of the nonzero eigenvalues of the canonical correlation matrix $\Sigma_{\mathbf{x}\mathbf{x}}^{-1}\Sigma_{\mathbf{x}\mathbf{y}}\Sigma_{\mathbf{y}\mathbf{y}}^{-1}\Sigma_{\mathbf{y}\mathbf{x}}$. In this paper, we focus on the case that $\Sigma_{\mathbf{x}\mathbf{y}}$ is of finite rank $k$, i.e. there are $k$ nonzero canonical correlation coefficients, whose squares are denoted by $r_1\geq\cdots\geq r_k>0$. Under the additional assumptions $(p+q)/n\to y\in (0,1)$ and $p/q\not\to 1$, we study the sample counterparts of $r_i,i=1,\ldots,k$, i.e. the largest k eigenvalues of the sample canonical correlation matrix $S_{\mathbf{x}\mathbf{x}}^{-1}S_{\mathbf{x}\mathbf{y}}S_{\mathbf{y}\mathbf{y}}^{-1}S_{\mathbf{y}\mathbf{x}}$, namely $\lambda_1\geq\cdots\geq \lambda_k$.  We show that there exists a threshold $r_c\in(0,1)$, such that for each $i\in\{1,\ldots,k\}$, when $r_i\leq r_c$, $\lambda_i$ converges almost surely to the right edge of the limiting spectral distribution of the sample canonical correlation matrix, denoted by $d_r$. When $r_i>r_c$, $\lambda_i$ possesses an almost sure limit in $(d_r,1]$, from which we can recover $r_i$ in turn, thus provide an estimate of the latter in the high-dimensional scenario.
\end{abstract}

\begin{keyword}[class=MSC]
\kwd{62H20, 60B20,60F99}
\end{keyword}

\begin{keyword}
\kwd{Canonical correlation analysis}
\kwd{ Random Matrices}
\kwd{ MANOVA ensemble}
\kwd{ High-dimensional data}
\kwd{ finite rank perturbation}
\kwd{ largest eigenvalues}
\end{keyword}

\end{frontmatter}
\section{Introduction} 
In multivariate analysis,  the most general and favorable method to investigate the relationship between two sets of random variables, arranged into the random vectors $\mathbf{x}$ and $\mathbf{y}$ respectively, is the Canonical Correlation Analysis (CCA), which was raised in the seminal work of Hotelling \cite{Hotelling1936}. CCA is aimed at seeking vectors $\mathbf{a}=\mathbf{a}_1$ and $\mathbf{b}=\mathbf{b}_1$ to maximize the correlation coefficient
\begin{eqnarray*}
\rho\equiv\rho(\mathbf{a},\mathbf{b}):=\frac{\text{Cov}(\mathbf{a}'\mathbf{x},\mathbf{b}'\mathbf{y})}{\sqrt{\text{Var}(\mathbf{a}'\mathbf{x})}\cdot\sqrt{\text{Var}(\mathbf{b}'\mathbf{y})}}.
\end{eqnarray*}
Conventionally,  $\rho_1:=\rho(\mathbf{a}_1,\mathbf{b}_1)$ is called the {\emph{first canonical correlation coefficient}}. Having obtained the first $m$ canonical correlation coefficients $\rho_i,i=1,\ldots,m$ and the corresponding vector pairs $(\mathbf{a}_i,\mathbf{b}_i),i=1\ldots,m$, one can proceed to seek vectors $(\mathbf{a}_{m+1},\mathbf{b}_{m+1})$ maximizing $\rho$ subject to the constraint that $(\mathbf{a}_{m+1}'\mathbf{x},\mathbf{b}_{m+1}'\mathbf{y})$ is uncorrelated with $(\mathbf{a}'_i\mathbf{x},\mathbf{b}_{i}'\mathbf{y})$ for all $i=1,\ldots,m$. Analogously, we call $\rho_i$ the {\emph{$i$th canonical correlation coefficient}} if it is nonzero. Denoting by $\Sigma_{\mathbf{u}\mathbf{v}}$ the population cross-covariance matrix of any random vectors $\mathbf{u}$ and $\mathbf{v}$, it is well known that 
$r_i:=\rho_i^2$ is the $i$th largest eigenvalue of the so-called {\emph{ (population) canonical correlation matrix}} \[\Sigma_{\mathbf{x}\mathbf{x}}^{-1}\Sigma_{\mathbf{x}\mathbf{y}}\Sigma_{\mathbf{y}\mathbf{y}}^{-1}\Sigma_{\mathbf{y}\mathbf{x}}.\] Drawing independently $n$ observations of the vector $\mathbf{z}:=(\mathbf{x}',\mathbf{y}')'\sim N(\boldsymbol{\mu},\Sigma)$ with mean vector $\boldsymbol{\mu}$ and covariance matrix
\begin{eqnarray*}
\Sigma=\left(\begin{array}{cc}
\Sigma_{\mathbf{x}\mathbf{x}} &\Sigma_{\mathbf{x}\mathbf{y}}\\
\Sigma_{\mathbf{y}\mathbf{x}} &\Sigma_{\mathbf{y}\mathbf{y}}
\end{array}\right),
\end{eqnarray*} 
namely $\mathbf{z}_i=(\mathbf{x}_i',\mathbf{y}_i')', i=1,\ldots,n$, we can study the canonical correlation coefficients via their sample counterparts. To be specific, we employ the notation $S_{\mathbf{u}\mathbf{v}}$ to represent the sample cross-covariance matrix for any two random vectors $\mathbf{u}$ and $\mathbf{v}$, where the implicit sample size of $(\mathbf{u}',\mathbf{v}')'$ is assumed to be $n$, henceforth. Then the square of the $i$th sample canonical correlation coefficient is defined as the $i$th largest eigenvalue of  the {\emph{sample canonical correlation matrix}} \[S_{\mathbf{x}\mathbf{x}}^{-1}S_{\mathbf{x}\mathbf{y}}S_{\mathbf{y}\mathbf{y}}^{-1}S_{\mathbf{y}\mathbf{x}},\] 
denoted by $\lambda_i$ in the sequel. 

Let $p$ and $q$ be the dimensions of the sub-vectors $\mathbf{x}$ and $\mathbf{y}$ respectively. In the classical low-dimensional setting, i.e., both $p$ and $q$ are fixed but $n$ is large, one can safely use $\lambda_i$ to estimate $r_i$, considering the convergence of the sample cross-covariance matrices towards their population counterparts, if $n$ is regarded to tend to $\infty$. However, nowadays, due to the increasing demand in the analysis of high-dimensional data springing up in various fields such as genomics, signal processing, microarray, finance and proteomics, putting forward a theory on high-dimensional CCA is much needed. In this paper, we will work with the following high-dimensional setting.
\begin{ass}[On the dimensions] \label{ass.070301} We assume that
$p:=p(n)$, $q:=q(n)$, and
\begin{eqnarray*}
p/n\to c_1\in(0,1),\quad q/n\to c_2\in (0,1),\quad q/p\not\to 1,\quad  \text{as}\quad n\to \infty,  \quad \text{s.t.} \quad c_1+c_2\in (0,1). 
\end{eqnarray*}
Without loss of generality, we always work with the additional assumption
\begin{eqnarray*}
p> q, \quad \text{thus} \quad c_1> c_2.
\end{eqnarray*}
\end{ass}
Let $\bar{\mathbf{x}}$ and $\bar{\mathbf{y}}$ be the sample means of $n$ samples $\{\mathbf{x}_i\}_{i=1}^n$ and $\{\mathbf{y}_i\}_{i=1}^n$ respectively, and use the notation $\mathring{\mathbf{x}}_i:=\mathbf{x}_i-\bar{\mathbf{x}}$ and $\mathring{\mathbf{y}}_i:=\mathbf{y}_i-\bar{\mathbf{y}}$ for $i=1,\ldots,n$.  We can then write
\begin{eqnarray*}
S_{\mathbf{x}\mathbf{x}}=\frac{1}{n-1}\sum_{i=1}^n\mathring{\mathbf{x}}_i\mathring{\mathbf{x}}_i',\quad S_{\mathbf{y}\mathbf{y}}=\frac{1}{n-1}\sum_{i=1}^n\mathring{\mathbf{y}}_i\mathring{\mathbf{y}}_i',\quad S_{\mathbf{x}\mathbf{y}}=\frac{1}{n-1}\sum_{i=1}^n\mathring{\mathbf{x}}_i\mathring{\mathbf{y}}_i',\quad S_{\mathbf{y}\mathbf{x}}=\frac{1}{n-1}\sum_{i=1}^n\mathring{\mathbf{y}}_i\mathring{\mathbf{x}}_i'.
\end{eqnarray*}
It is well known that there exist $n-1$ i.i.d. normal vectors $\tilde{\mathbf{z}}_i=(\tilde{\mathbf{x}}_i', \tilde{\mathbf{y}}_i')'
\sim N(\mathbf{0},\Sigma),
$
such that
\begin{eqnarray*}
S_{\mathbf{x}\mathbf{x}}=\frac{1}{n-1}\sum_{i=1}^{n-1}\tilde{\mathbf{x}}_i\tilde{\mathbf{x}}'_i,\quad S_{\mathbf{y}\mathbf{y}}=\frac{1}{n-1}\sum_{i=1}^{n-1}\tilde{\mathbf{y}}_i\tilde{\mathbf{y}}'_i,\quad S_{\mathbf{x}\mathbf{y}}=\frac{1}{n-1}\sum_{i=1}^{n-1}\tilde{\mathbf{x}}_i\tilde{\mathbf{y}}'_i,\quad S_{\mathbf{y}\mathbf{x}}=\frac{1}{n-1}\sum_{i=1}^{n-1}\tilde{\mathbf{y}}_i\tilde{\mathbf{x}}'_i.
\end{eqnarray*}
For simplicity, we recycle the notation $\mathbf{x}_i$ and $\mathbf{y}_i$ to replace $\tilde{\mathbf{x}}_i$ and $\tilde{\mathbf{y}}_i$,  and work with $n$ instead of $n-1$, noticing that such a replacement on sample size is harmless to Assumption \ref{ass.070301}. Hence, we can and do assume that $\mathbf{z}$ is centered in the sequel. By our assumption $p>q$, there are at most $q$ non-zero canonical correlations, either population ones or sample ones. An elementary fact is that $\lambda_{i},r_i\in[0,1]$ for all $i=1,\ldots, q$. Note that $\lambda_i,i=1,\ldots,q$ are also eigenvalues of the $q$ by $q$ matrix $S_{\mathbf{y}\mathbf{y}}^{-1}S_{\mathbf{y}\mathbf{x}}S_{\mathbf{x}\mathbf{x}}^{-1}S_{\mathbf{x}\mathbf{y}}$, whose empirical spectral distribution (ESD) will be denoted by 
\begin{eqnarray*}
F_n(x):=\frac{1}{q}\sum_{i=1}^q\mathbf{1}_{\{\lambda_i\leq x\}}.
\end{eqnarray*}
We will also write the data matrices 
\begin{eqnarray*}
X:=(\mathbf{x}_1,\ldots,\mathbf{x}_n),\quad Y:=(\mathbf{y}_1,\ldots, \mathbf{y}_n).
\end{eqnarray*}
Our aim, in this work, is to study the limit of $\lambda_i$ for any fixed positive integer $i$, when there is some fixed nonnegative integer $k$, such that
\begin{eqnarray*}
r_1\geq\ldots\geq r_k>r_{k+1}=\ldots=r_q=0.
\end{eqnarray*}
Formally, we make the following assumption throughout the work.
\begin{ass}[On the rank of the population matrix] \label{ass.070302} We assume that $\mathrm{rank}(\Sigma_{\mathbf{x}\mathbf{y}})=k$ for some fixed positive integer $k$. Furthermore, setting $r_0=1$, we denote by $k_0$ the nonnegative integer satisfying
\begin{eqnarray}1\geq\ldots \geq r_{k_0}> r_c\geq r_{k_{0}+1}\geq \ldots r_k> r_{k+1}=0, \label{062806}
\end{eqnarray}
where
\begin{eqnarray}
r_c\equiv r_c(c_1,c_2):=\frac{c_1c_2+\sqrt{c_1c_2(1-c_1)(1-c_2)}}{(1-c_1)(1-c_2)+\sqrt{c_1c_2(1-c_1)(1-c_2)}}. \label{062805}
\end{eqnarray}
\end{ass} In Section 1.2 we will state our main results. Before that, we introduce in Section 1.1 some known results in the {\emph{null case}}, i.e. $k=0$, which will be the starting point of our discussion.
\subsection{The null case: MANOVA ensemble} At first, we introduce some known results on the limiting behavior of $\{\lambda_i\}_{i=1}^q$ in the null case,  
i.e. $\mathbf{x}$ and $\mathbf{y}$ are independent, or else,  $r_i=0$ for all $i=1,\ldots,q$. It is elementary to see that the canonical correlation coefficients are invariant under the block diagonal transformation $(\mathbf{x}_i,\mathbf{y}_i)\to(A\mathbf{x}_i,B\mathbf{y}_i)$, for any $p\times p$ matrix $A$ and $q\times q$ matrix $B$, as long as both of them are nonsingular. For simplicity, in this section, we temporally assume that $\Sigma_{\mathbf{x}\mathbf{x}}=I_p$ and $\Sigma_{\mathbf{y}\mathbf{y}}=I_q$. Under our high-dimensional setting, i.e. Assumption \ref{ass.070301}, it is known that $\lambda_i$'s do not converge to $0$ even in the null case, instead, they typically spread out in an interval contained in $[0,1]$. Specifically, we have the following theorem on $F_n(x)$, essentially due to Wachter \cite{Wachter1980}.
\begin{thm} \label{thm.070301}When $\mathbf{x}$ and $\mathbf{y}$ are independent and Assumption \ref{ass.070301} holds, almost surely, $F_n$ converges weakly to  $F(x)$ possessing density
\begin{eqnarray*}
\rho(x)=\frac{1}{2\pi c_2}\frac{\sqrt{(d_r-x)(x-d_\ell)}}{x(1-x)}\mathbf{1}_{\{d_\ell\leq x\leq d_r\}},
\end{eqnarray*}
with
\begin{eqnarray*}
d_r=c_1+c_2-2c_1c_2+2\sqrt{c_1c_2(1-c_1)(1-c_2)},\quad d_\ell=c_1+c_2-2c_1c_2-2\sqrt{c_1c_2(1-c_1)(1-c_2)}.
\end{eqnarray*}
 \end{thm}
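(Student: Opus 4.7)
The plan is to reduce the null-case problem, via orthogonal invariance, to the classical MANOVA (Jacobi) ensemble, whose limiting spectral distribution was explicitly derived by Wachter \cite{Wachter1980}, and to then transport the density through the reduction.

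First, I would use the invariance of the sample canonical correlation eigenvalues under $(\mathbf{x}_i,\mathbf{y}_i)\mapsto(A\mathbf{x}_i,B\mathbf{y}_i)$ with any non-singular $A,B$ to normalize $\Sigma_{\mathbf{xx}}=I_p$ and $\Sigma_{\mathbf{yy}}=I_q$. Independence of $\mathbf{x}$ and $\mathbf{y}$ then makes the two data matrices $X\in\mathbb{R}^{p\times n}$ and $Y\in\mathbb{R}^{q\times n}$ independent with i.i.d.\ $N(0,1)$ entries. Next, I would rewrite the nonzero eigenvalues of $S_{\mathbf{yy}}^{-1}S_{\mathbf{yx}}S_{\mathbf{xx}}^{-1}S_{\mathbf{xy}}$ as the nonzero eigenvalues of $(YY')^{-1}YP_XY'$, where $P_X:=X'(XX')^{-1}X$ is the orthogonal projector onto the row span of $X$, of rank $p$ almost surely since $p<n$.

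Conditioning on $X$, the distribution of $Y$ is orthogonally invariant, so in law I may replace $P_X$ by any fixed rank-$p$ projector, for instance $\tilde{P}=\mathrm{diag}(I_p,0_{n-p})$. Splitting $Y=(Y_1\mid Y_2)$ with $Y_1\in\mathbb{R}^{q\times p}$ and $Y_2\in\mathbb{R}^{q\times (n-p)}$ independent Gaussian, this yields
\begin{eqnarray*}
YP_XY'\;\stackrel{d}{=}\;Y_1Y_1',\qquad Y(I-P_X)Y'\;\stackrel{d}{=}\;Y_2Y_2',
\end{eqnarray*}
so the ESD $F_n$ is that of the MANOVA matrix $(Y_1Y_1'+Y_2Y_2')^{-1}Y_1Y_1'$ with independent Wishart blocks of parameters $(q,p)$ and $(q,n-p)$. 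The dimensional ratios satisfy $p/n\to c_1$, $(n-p)/n\to 1-c_1$, $q/n\to c_2$, all bounded away from $0$ and $1$ under Assumption \ref{ass.070301}, and $q/p\not\to 1$ ensures the two Wishart components do not coincide in aspect ratio.

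Finally, I would identify the eigenvalues of this MANOVA matrix with those of the multivariate $F$-type matrix $F:=(Y_2Y_2')^{-1}Y_1Y_1'$ via the bijection $\lambda=\mu/(1+\mu)$, and invoke the known LSD of $F$. One route is to write the Stieltjes transform $m_F(z)$ via a self-consistent equation obtained from the Marchenko--Pastur laws of $Y_1Y_1'/n$ and $Y_2Y_2'/n$ (free multiplicative convolution, or Bai--Silverstein's deterministic equivalent), solve the resulting quadratic for $m_F$, and then push forward through $\mu\mapsto \mu/(1+\mu)$. After simplification, the endpoints of the support become $d_\ell$ and $d_r$ as stated, and the Jacobian of the change of variable produces exactly the density $\rho(x)$. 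The almost sure mode of convergence follows from standard truncation and concentration arguments for Wishart matrices with Gaussian entries, or equivalently from Wachter's original proof of the MANOVA LSD which establishes a.s.\ convergence directly. The only genuinely non-trivial step is the identification of the density and its support from the Stieltjes transform; once this (classical) calculation is granted, the proof is essentially the chain of reductions described above.
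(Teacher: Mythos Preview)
Your proposal is correct and mirrors the paper's treatment: the paper does not prove this theorem from scratch but attributes it to Wachter \cite{Wachter1980} and sketches precisely the MANOVA reduction you describe---writing $S_{\mathbf{yy}}^{-1}S_{\mathbf{yx}}S_{\mathbf{xx}}^{-1}S_{\mathbf{xy}}=(Y(I-P_{\mathbf{x}})Y'+YP_{\mathbf{x}}Y')^{-1}YP_{\mathbf{x}}Y'$ and invoking Cochran's theorem to obtain the two independent Wishart blocks. Your conditioning-plus-orthogonal-invariance argument is a minor variant of the Cochran step, and your additional details (the $\lambda=\mu/(1+\mu)$ change of variable to the $F$-matrix and the Stieltjes-transform identification of the density) go beyond what the paper writes out, but the essential route is the same.
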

 \begin{rem} In the null case, the convergence of the ESD of the canonical correlation matrix actually holds under a more general distribution assumption as well, see \cite{YP2012}. 
 \end{rem}
 Conventionally, we call $F(x)$ in Theorem \ref{thm.070301} the limiting spectral distribution (LSD) of $S_{\mathbf{y}\mathbf{y}}^{-1}S_{\mathbf{y}\mathbf{x}}S_{\mathbf{x}\mathbf{x}}^{-1}S_{\mathbf{x}\mathbf{y}}$.
 One might note that $F(x)$ is just the LSD of the so-called MANOVA ensemble with appropriately chosen parameters, which is widely studied in the Random Matrix Theory (RMT). Actually, when $\mathbf{x}$ and $\mathbf{y}$ are normal and independent, the canonical correlation matrix $S_{\mathbf{y}\mathbf{y}}^{-1}S_{\mathbf{y}\mathbf{x}}S_{\mathbf{x}\mathbf{x}}^{-1}S_{\mathbf{x}\mathbf{y}}$ is exactly a MANOVA matrix, which can be seen by regarding $P_\mathbf{x}:=X'(XX')^{-1}X$ as a projection matrix independent of $Y$ thus
 \begin{eqnarray*}
S_{\mathbf{y}\mathbf{y}}^{-1}S_{\mathbf{y}\mathbf{x}} S_{\mathbf{x}\mathbf{x}}^{-1}S_{\mathbf{x}\mathbf{y}}=(Y(I-P_\mathbf{x})Y'+YP_\mathbf{x}Y')^{-1}YP_\mathbf{x}Y'
 \end{eqnarray*}
 and using Cochran's theorem to see that $Y(I-P_\mathbf{x})Y'$ and $YP_\mathbf{x}Y'$ are independent and 
 \begin{eqnarray*}
 Y(I-P_\mathbf{x})Y'\sim \text{Wishart}_q(I_q, n-p),\quad YP_\mathbf{x}Y'\sim\text{Wishart}_q(I_q, p).
 \end{eqnarray*}
Consequently, $\lambda_i,i=1,\ldots,q$ are known to possess the following joint density function,
 \begin{eqnarray*}
 p_n(\lambda_1,\ldots,\lambda_q)=C_n\prod_{i=1}^q(1-\lambda_i)^{(n-p-q-1)/2}\lambda_i^{(p-q-1)/2}\prod_{i<j}^q|\lambda_i-\lambda_j|,\quad \lambda_{i}\in [0,1], \quad i=1,\ldots,q,
 \end{eqnarray*}
 where $C_n$ is the normalizing constant, see Muirhead \cite{Muirhead1982}, page 112, for instance. Or else, one can refer to \cite{Johnstone2008}, for more related discussions.  In the language of RMT, after the change of variables $\lambda_i\to (1-\lambda_i)/2$, the point process possessing the above joint density turns out to be a Jacobi ensemble. 
 
 Throughout the paper, we will say that an $n$-dependent event $\mathbf{E}_n$ holds with {\emph{overwhelming probability}} if  for any positive number $\ell$, there exists
 \begin{eqnarray*}
 \mathbb{P}(\mathbf{E}_n)\geq 1-n^{-\ell}
 \end{eqnarray*}
 when $n$ is sufficiently large.
  The next crucial known result concerns the  convergence of the largest eigenvalues.
 \begin{thm} \label{thm.071501} When $\mathbf{x}$ and $\mathbf{y}$ are independent and Assumption \ref{ass.070301} holds, we have 
 \begin{eqnarray}
 \lambda_i\to d_r,\quad \text{a.s.} \label{071501}
 \end{eqnarray}
 for any fixed positive integer $i$. More precisely, for $\lambda_1$, one has that for any small constant $\eta>0$, 
 \begin{eqnarray}
\lambda_1\leq d_r+\eta \label{071502}
 \end{eqnarray}
 holds with overwhelming probability.
 \end{thm}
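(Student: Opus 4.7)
The plan is to exploit the MANOVA representation already recorded just before the theorem and reduce the edge problem to a classical result for Fisher-type matrices. Under the reduction $\Sigma_{\mathbf{x}\mathbf{x}}=I_p$, $\Sigma_{\mathbf{y}\mathbf{y}}=I_q$, write $S_{\mathbf{y}\mathbf{y}}^{-1}S_{\mathbf{y}\mathbf{x}}S_{\mathbf{x}\mathbf{x}}^{-1}S_{\mathbf{x}\mathbf{y}}=(A+B)^{-1}B$ with $A=Y(I-P_{\mathbf{x}})Y'$ and $B=YP_{\mathbf{x}}Y'$. Since $P_{\mathbf{x}}$ is a rank-$p$ orthogonal projection independent of $Y$, rotational invariance of $Y$ (which has i.i.d.\ $N(\mathbf{0},I_q)$ columns) yields $A=W_{2}W_{2}'$ and $B=W_{1}W_{1}'$, where $W_{1}\in\mathbb{R}^{q\times p}$ and $W_{2}\in\mathbb{R}^{q\times(n-p)}$ are independent Gaussian matrices with i.i.d.\ $N(0,1)$ entries. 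The nonzero eigenvalues $\lambda_i$ then satisfy $\lambda_i=\mu_i/(1+\mu_i)$, where $\mu_i$ are the eigenvalues of the F-matrix $F:=A^{-1}B$, so because $\mu\mapsto\mu/(1+\mu)$ is monotone increasing, the edge problems for $\lambda_1$ and $\mu_1$ are equivalent.

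For the lower direction, Theorem~\ref{thm.070301} already implies $\liminf_n \lambda_i\geq d_r$ a.s.\ for every fixed $i$, since the weak limit $F$ places positive mass on every left neighbourhood of $d_r$ and $\lambda_i$ dominates the $i$-th largest of $q$ sample points whose empirical distribution converges to $F$. The substantive task is therefore the upper bound $\limsup_n\lambda_1\leq d_r$, which I would obtain by invoking the classical strong convergence of the largest eigenvalue of a Fisher matrix (Yin--Bai--Krishnaiah; see the Bai--Silverstein monograph): in the proportional regime $p/n\to c_1$, $(n-p)/n\to 1-c_1$, $q/n\to c_2$, one has $\mu_1(F)\to\mu_+$ a.s., with $\mu_+$ the right endpoint of the limiting spectrum of $F$. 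A direct algebraic check (equivalent to inverting the Wachter density of Theorem~\ref{thm.070301} under $\lambda\mapsto\lambda/(1-\lambda)$) shows $\mu_+/(1+\mu_+)=d_r$, producing \eqref{071501}.

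To upgrade the a.s.\ statement to the overwhelming-probability bound \eqref{071502}, I would appeal to Gaussian concentration on a good event. Set
\[
\Omega_\kappa:=\bigl\{\sigma_{\min}(W_2/\sqrt{n-p})\geq\kappa\bigr\}\cap\bigl\{\|W_1/\sqrt{p}\|_{\mathrm{op}}\leq K\bigr\},
\]
which, for suitable constants $\kappa,K$, holds with overwhelming probability by standard Gaussian tail estimates for the extreme singular values of Wishart matrices. On $\Omega_\kappa$, the functional $(W_1,W_2)\mapsto\lambda_1((A+B)^{-1}B)$ is Lipschitz in the Hilbert--Schmidt norm with a constant controlled by $\kappa$ and $K$, so Gaussian isoperimetry yields sub-Gaussian concentration of $\lambda_1$ around a median, and the a.s.\ limit established above forces that median into $[d_r-\eta/2,d_r+\eta/2]$ for large $n$. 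The principal obstacle throughout is exactly the presence of the inverse $A^{-1}$, which makes naive Lipschitz constants blow up; the truncation onto $\Omega_\kappa$ is what salvages the concentration argument, and a routine bookkeeping step is required to check that the truncated and untruncated medians differ by $o(1)$.
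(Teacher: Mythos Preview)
Your approach is correct but proceeds quite differently from the paper. The paper does not give a self-contained argument at all: it simply observes (in the remark immediately following the statement) that \eqref{071502} is a direct consequence of the small-deviation estimates for the largest eigenvalue of the Jacobi ensemble established in \cite{Katz2012}, and that \eqref{071501} then follows from \eqref{071502} together with Theorem~\ref{thm.070301} via Borel--Cantelli. In other words, the paper treats the overwhelming-probability bound as the primary input (borrowed from the Jacobi literature) and deduces the almost-sure statement from it.

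You invert this logic: you first obtain the almost-sure edge convergence by passing to the Fisher matrix $F=A^{-1}B$ and invoking the classical Bai--Yin--Krishnaiah/Bai--Silverstein strong convergence $\mu_1(F)\to\mu_+$, together with the algebraic identity $\mu_+/(1+\mu_+)=d_r$; only afterwards do you upgrade to \eqref{071502} via Gaussian concentration on the truncated event $\Omega_\kappa$. This is a legitimate and in some ways more robust route, since it relies on general-purpose tools (F-matrix edge asymptotics and Lipschitz concentration) rather than Jacobi-specific small-deviation theory. The cost is the extra bookkeeping you correctly flag: constructing a globally Lipschitz extension off $\Omega_\kappa$ (or equivalently composing with a Lipschitz retraction), verifying the $O(n^{-1/2})$ Lipschitz constant on $\Omega_\kappa$, and reconciling the truncated median with the almost-sure limit. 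None of this is problematic, but it is genuinely more work than the paper's one-line citation.
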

 \begin{rem} The estimate (\ref{071502}),  can actually be implied by some existing results in the literature directly. For example, one can refer to the small deviation estimate of the largest eigenvalue of the Jacobi ensemble in \cite{Katz2012} . Moreover, (\ref{071501}) is a direct consequence of (\ref{071502}) and Theorem \ref{thm.070301}. 
 \end{rem}
 \begin{rem} It is believed that, on the fluctuation level,  $\lambda_1$  possesses a Type 1 Tracy-Widom limit after appropriate normalization. Such a result has been partially established in \cite{Johnstone2008}.
 \end{rem}
\subsection{Finite rank case} We now turn to the case we are interested in: the finite rank case. To wit, Assumption \ref{ass.070302} holds. It will be clear that the sample canonical correlation matrix in such a finite rank case can be viewed as a finite rank perturbation of that in the null case. Consequently, the global behavior, especially the LSD, turns out to coincide with the null case. However, finite rank perturbation may significantly alter the behavior of the extreme eigenvalues, especially when the perturbation is strong enough. Similar problems have been studied widely for various random matrix models, not trying to be comprehensive, we refer to the spiked sample covariance matrices (\cite{BS2006, Paul2007, BY2008, BBP2005, FP2009}), the deformed Wigner matrices (\cite{CDF2009, peche2006, FP2009, CDF2012, KY2013}), the deformed unitarily invariant matrices (\cite{BBCF2012, Kargin2014}), and some more general models (\cite{BN2011, BGM2011}), either on the limit level or fluctuation level. In this work, for our sample canonical correlation matrix $S_{\mathbf{x}\mathbf{x}}^{-1}S_{\mathbf{x}\mathbf{y}}S_{\mathbf{y}\mathbf{y}}^{-1}S_{\mathbf{y}\mathbf{x}}$, we take the first step to study the limits of its largest eigenvalues, i.e. squares of the largest sample canonical correlation coefficients, under Assumption \ref{ass.070302}.
Our main result is the following theorem.
\begin{thm} \label{thm.061901} Under Assumptions \ref{ass.070301} and \ref{ass.070302}, the squares of the largest canonical correlation coefficients exhibit the following convergence as $n\to\infty$.
\begin{itemize}
\item[(i):] For $1\leq i\leq k_0$, we have
\begin{eqnarray*}
\lambda_i\stackrel{a.s.}\longrightarrow \gamma_i:=r_i(1-c_1+c_1r_i^{-1})(1-c_2+c_2r_i^{-1}).
\end{eqnarray*} 
\item[(ii):] For  each fixed $i\geq k_0+1$, we have
\begin{eqnarray*}
\lambda_i \stackrel{a.s.}\longrightarrow d_r.
\end{eqnarray*}
\end{itemize}
\end{thm}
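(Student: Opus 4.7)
The plan is to set up a finite-dimensional secular equation governing the outlier eigenvalues and then analyze it deterministically, in the spirit of the BBP-type analyses for spiked models. By the invariance of canonical correlations under the block-diagonal transformation $(\mathbf{x},\mathbf{y})\mapsto(A\mathbf{x},B\mathbf{y})$ with nonsingular $A,B$, I would first reduce to the canonical form $\Sigma_{\mathbf{x}\mathbf{x}}=I_p$, $\Sigma_{\mathbf{y}\mathbf{y}}=I_q$, $\Sigma_{\mathbf{x}\mathbf{y}}=\begin{pmatrix}D&0\\0&0\end{pmatrix}$ with $D=\mathrm{diag}(\sqrt{r_1},\ldots,\sqrt{r_k})$. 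Writing $X=\begin{pmatrix}X_1\\X_2\end{pmatrix}$, the Gaussian data can be generated as $Y=\widetilde Y+D_0X$, where $D_0=\begin{pmatrix}D&0\\0&0\end{pmatrix}$ is $q\times p$ and $\widetilde Y$ is independent of $X$ with i.i.d.\ columns of covariance $\mathrm{diag}(I_k-D^2,\,I_{q-k})$. The dependence between $X$ and $Y$ is thus squeezed into a rank-$k$ signal controlled by $D$.

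Next, I would use the identity that any nonzero $\lambda$ is an eigenvalue of $S_{\mathbf{x}\mathbf{x}}^{-1}S_{\mathbf{x}\mathbf{y}}S_{\mathbf{y}\mathbf{y}}^{-1}S_{\mathbf{y}\mathbf{x}}$ iff $\det\bigl(Y(\lambda I_n-P_X)Y'\bigr)=0$, where $P_X=X'(XX')^{-1}X$. Substituting $Y=\widetilde Y+D_0X$ and using $XP_X=X$, this determinant becomes
\[
\det\!\Bigl(H(\lambda)+(\lambda-1)\bigl(D_0X\widetilde Y'+\widetilde YX'D_0'+D_0XX'D_0'\bigr)\Bigr)=0,
\]
where $H(\lambda):=\widetilde Y(\lambda I_n-P_X)\widetilde Y'$ is the null operator (whose kernel encodes the null-case eigenvalues via the MANOVA structure recalled in \S1.1). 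The additive term has rank at most $2k$ and factors as $LR^{\top}$ with $L,R$ of size $q\times 2k$, so by the matrix determinant lemma any $\lambda$ outside the spectrum of the null operator satisfies
\[
\det\bigl(I_{2k}+R^{\top}H(\lambda)^{-1}L\bigr)=0. \qquad(\star)
\]

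To pass to the deterministic limit of $(\star)$, one needs to concentrate the bilinear forms of type $\tfrac1n X_1H(\lambda)^{-1}X_1'$, $\tfrac1n\widetilde Y_1H(\lambda)^{-1}\widetilde Y_1'$, their cross products, and $\tfrac1n X_1X_1'$. Decomposing the projection as $P_X=P_{X_2}+\widetilde X_1'(\widetilde X_1\widetilde X_1')^{-1}\widetilde X_1$ with $\widetilde X_1=X_1(I-P_{X_2})$ uncouples the $X_1$-dependence of $H(\lambda)$ from the ambient null MANOVA matrix built out of $(X_2,\widetilde Y)$, whose resolvent has a deterministic anisotropic limit expressible through the Stieltjes transform $m_F$ of the LSD of Theorem~\ref{thm.070301}. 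Plugging the limits of these bilinear forms into $(\star)$ and using the diagonal structure of $D$, the equation factorizes into $k$ scalar equations; solving the $i$th yields the candidate outlier location
\[
\gamma_i=r_i\bigl(1-c_1+c_1r_i^{-1}\bigr)\bigl(1-c_2+c_2r_i^{-1}\bigr),
\]
and a direct monotonicity check shows $\gamma_i>d_r$ iff $r_i>r_c$, with equality at $r_i=r_c$. For $i\le k_0$ a continuity/inverse-function argument applied to $(\star)$ produces a genuine eigenvalue near $\gamma_i$; for $k_0<i\le k$ no root of the limiting equation lies in $(d_r,1]$, and combined with Theorem~\ref{thm.071501} applied to the null MANOVA matrix associated with $(X_2,\widetilde Y)$ (whose eigenvalues interlace with the $\lambda_i$ up to a rank-$2k$ correction) this forces $\lambda_i\to d_r$ almost surely.

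The main obstacle is the concentration step. The resolvent $H(\lambda)^{-1}$ is a function of both $X$ and $\widetilde Y$, and in particular is not independent of the vectors $X_1,\widetilde Y_1$ populating $L$ and $R$; one must therefore carry out the conditioning via the Schur decomposition of $P_X$ above and establish what amounts to an entrywise (or anisotropic) local law for the MANOVA ensemble, valid uniformly in $\lambda$ in compact subsets of $(d_r,1]$ and extending continuously up to the edge. Upgrading the resulting high-probability estimates to almost-sure statements on a countable net in $\lambda$, using overwhelming-probability bounds as in Theorem~\ref{thm.071501} and Borel--Cantelli, then closes the argument; the remaining algebra — identifying $m_F$, computing $\gamma_i$, and checking the threshold condition $r_c$ — is straightforward.
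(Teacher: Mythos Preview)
Your high-level strategy matches the paper's: reduce to canonical form, exhibit the sample matrix as a finite-rank perturbation of a null MANOVA, derive a low-dimensional determinant equation for the outliers, pass to a deterministic limit, and read off $\gamma_i$ and the threshold $r_c$. The substantive difference is in the decomposition. You write $Y=\widetilde Y+D_0X$ with $\widetilde Y$ independent of $X$ but carrying the non-identity covariance $\mathrm{diag}(I_k-D^2,I_{q-k})$; the paper instead performs several further block-diagonal transformations to reach $X=W+TY$ where \emph{both} $W$ and $Y$ have identity covariance and are independent (with $T$ diagonal, $r_i=t_i^2/(1+t_i^2)$). The payoff of that extra algebra is that the left singular vectors of $W$ are Haar-distributed and independent of everything else in the resolvent $\Phi(\lambda)=(S_{\mathbf{wy}}S_{\mathbf{yy}}^{-1}S_{\mathbf{yw}}-\lambda S_{\mathbf{ww}})^{-1}$, so the bilinear forms in the determinant equation concentrate by elementary quadratic-form estimates for Haar columns (the paper's Lemma~\ref{lem.071604}), and the diagonal limits are then computed via R-transforms from free probability; no anisotropic local law is needed. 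Your route is quicker to set up but, as you correctly flag, runs into a genuine dependence problem: even after the Schur splitting $P_X=P_{X_2}+P_{\widetilde X_1}$, the rows of $X_1$ appear both outside and (through $\widetilde X_1$) inside $H(\lambda)^{-1}$, and controlling the cross-terms uniformly on $(d_r,1]$ really does require an entrywise/anisotropic law for the MANOVA resolvent. Both approaches are viable; the paper trades a less transparent parametrization for a much lighter concentration step, while yours is conceptually cleaner but leans on heavier random-matrix input. Two minor points you should also address: your $\widetilde Y$ degenerates when some $r_i=1$ (the paper handles this case by a separate continuity argument), and your interlacing claim for part~(ii) should be stated against the null matrix $S_{\mathbf{ww}}^{-1}S_{\mathbf{wy}}S_{\mathbf{yy}}^{-1}S_{\mathbf{yw}}$ rather than one built from $(X_2,\widetilde Y)$, since the latter has shifted dimension ratios.
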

The different limiting behavior of $\lambda_i$ in (i) and (ii) of Theorem \ref{thm.061901} can be observed in Figure \ref{fig1} below.
\begin{figure}[h]
\begin{center}
\includegraphics[width=10cm]{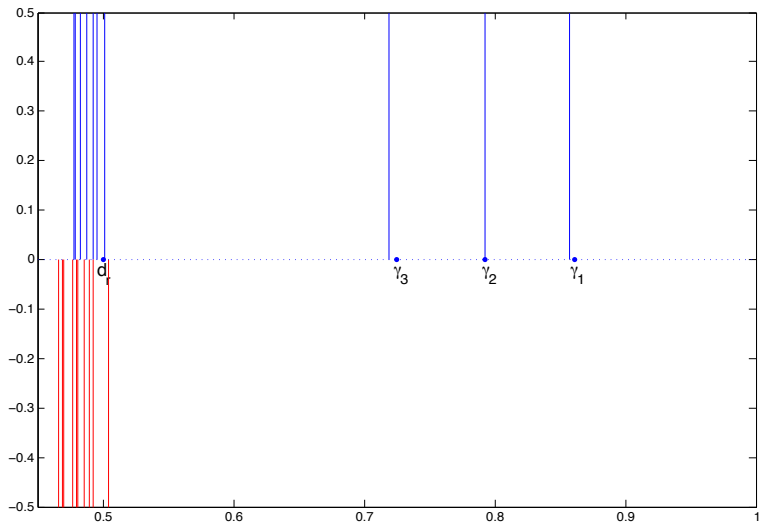}
\end{center}
\caption{The simulation was done in Matlab. We chose a normal vector $\mathbf{z}=(\mathbf{x}',\mathbf{y}')'$ with $p=500$ and $q=1000$. The sample size is $n=5000$. Hence, $c_1=0.1$, $c_2=0.2$. Then $r_c\approx 0.17$ and $d_r=0.5$. We chose $k=5$ and $(r_1, r_2, r_3, r_4, r_5)=(0.8, 0.7, 0.6, 0.16, 0.15)$. Then $\gamma_1\approx 0.86$, $\gamma_2\approx 0.79$, $\gamma_3\approx 0.73$. The abscises of the vertical segments represent the eigenvalues. The blue ones, above the dotted line, correspond to the largest $10$ eigenvalues of the sample canonical correlation matrices under the above setting, and the red ones, under the dotted line, correspond to the  largest $10$ eigenvalues in the null case.} \label{fig1}
\end{figure}
\subsection{Organization and notations}
Our paper is organized as follows. We introduce in Section 2 some necessary preliminaries. In Section 3, we will reformulate the sample canonical correlation matrix in the finite rank case as a perturbation of that in the null case, thereby obtaining a determinant equation for the largest eigenvalues, in the spirit of \cite{BN2011}. By solving the limiting equation of the determinant equation, we can get the limits of the largest eigenvalues in Section 4, i.e. prove Theorem \ref{thm.061901}, with some technical lemmas granted, especially those concerning the convergence of the determinant equation to the limiting one.  Sections 5, 6 and 7 are then devoted to proving these technical lemmas.

Throughout the paper,  $C$, $C_1$, $C_2$, $C_3$ and $C_4$ represent some generic constants whose values may vary from line to line. The notation $\mathbf{0}_{k_1\times k_2}$ is used to denote the $k_1$ by $k_2$ null matrix, which will be abbreviated to $\mathbf{0}_{k_1}$ if $k_1=k_2$. For any matrix $A$, its $(i,j)$th entry will be written as $A(i,j)$. When $A$ is square, we use $\text{Spec}(A)$ to denote its spectrum. For a function $f:\mathbb{C}\to\mathbb{C}$ and an Hermitian matrix $A$ with spectral decomposition $U_A\Lambda_AU^*_A$, we define $f(A)$ as usual, in the sense of functional calculus, to wit, $f(A)=U_Af(\Lambda_A)U^*_A$, where $f(\Lambda_A)$ is the diagonal matrix obtained via replacing the eigenvalues of $A$ by their images under $f$.
We will conventionally adopt the notation $||A||$ to represent the operator norm of a matrix $A$. While $||\mathbf{b}||$ stands for the Euclidean norm of $\mathbf{b}$ when it is a vector.
Let $\mathcal{P}=(\Omega, \mathcal{B}, \mu)$ be the underlying probability space of all random variables in this paper and set $\text{Mat}_m(\mathcal{P}):=\cap_{p=1}^\infty L^p(\mathcal{P}, \text{Mat}_m(\mathbb{C}))$, where $L^p(\mathcal{P}, \text{Mat}_m(\mathbb{C}))$ represents the set of all $m$ by $m$ random matrices whose entries are complex random variables on $\mathcal{P}$, possessing $p$th moments. For convenience, we say an $s$ by $t$ random matrix $S\sim N(\mathbf{0}, A\otimes B)$ if its vectorization follows a multivariate normal distribution with mean zero and covariance $A\otimes B$.
\section{Prelimenaries}
In this section, we introduce some basic notions and technical tools escorting our proofs and calculations in the subsequent sections. It turns out that in a key step, we need to provide the limits of the normalized trace of the matrices of the form $(A+B)^{-1}$, also known as the Stieltjes transform of the ESD of $A+B$ at origin when it is well-defined,  where $A$ and $B$ can be independent Wishart matrices or independent inverse-Wishart matrices, or their slight variants. As a sum of two independent orthogonally invariant matrices, its limiting global spectral property can usually be easily figured out by pursuing the tools from the Free Probability Theory, especially the commonly-used calculation scheme based on the Stieltjes transform and R-transform. We briefly review them below.
\begin{itemize}
\setlength{\itemindent}{-.2in}
\item {\emph{Stieltjes transform}}
\end{itemize}
For any given probability distribution $\sigma(\lambda)$, its Stieltjes transform is known as
\begin{eqnarray*}
s_{\sigma}(\omega):=\int\frac{1}{\lambda-\omega}d\sigma(\lambda),\quad \omega\in \mathbb{C}^+:=\{z\in\mathbb{C}: \Im z>0\}.
\end{eqnarray*}
From the definition, we can immediately get the fact that $\Im s_{\sigma}(\omega)>0$ for $\omega\in\mathbb{C}^+$. Actually,
the definition of $s_{\sigma}(\omega)$ can be extended to the domain $\mathbb{C}\setminus\text{supp}(\sigma)$ when $\sigma$ is compactly supported, by setting $s_\sigma(\bar{\omega})=\overline{s_\sigma(\omega)}$, where $\text{supp}(\sigma)$ represents the support of $\sigma(\lambda)$. Then $s_\sigma(\omega)$ is holomorphic on $\mathbb{C}\setminus \text{supp}(\sigma)$. More specifically, for our purpose, we focus on the Stieltjes transform of the spectral distribution of the sample covariance matrices in the sequel. For simplicity, we only state the result for a Wishart matrix $S\sim \text{Wishart}_p(I_p, n)$, as an example. For full generality, one can refer to the the monograph of Bai and Silverstein \cite{BS2009}.  
Let $F_{p,n}(\lambda)$ be the ESD of $n^{-1}S$. It is well known that, almost surely, $F_{p,n}$ converges weakly to its LSD, denoted by $F_{c_1}$, which follows the famous Marchenko-Pastur law (MP law), with density given by
\begin{eqnarray*}
\rho_{c_1}(\lambda)=\frac{1}{2\pi\lambda c_1}\sqrt{(d'_r-\lambda)(\lambda-d'_\ell)}\mathbf{1}_{\{d'_\ell\leq \lambda\leq d'_r\}},
\end{eqnarray*}
where $d'_r=(1+\sqrt{c_1})^2$ and $d'_\ell=(1-\sqrt{c_1})^2$. Now, correspondingly, we have the almost sure convergence of the Stieltjes transform. Denoting by $s_{p,n}(\omega)$ and $s_{c_1}(\omega)$ the Stieltjes transforms of $F_{p,n}$ and $F_{c_1}$ respectively, it is well known that $s_{p,n}(\omega)\to s_{c_1}(\omega)$ for all $\omega\in\mathbb{C}\setminus[d'_\ell,d'_r]$ almost surely. And we have 
\begin{eqnarray*}
s_{c_1}(\omega)=\frac{(1-c_1)-\omega+\sqrt{(\omega-1-c_1)^2-4c_1}}{2c_1\omega},
\end{eqnarray*}
where the square root is chosen to satisfy $\sqrt{(\omega-1-c_1)^2-4c_1}/\omega\to 1$ when $\omega\to\infty$.

\begin{itemize}
\setlength{\itemindent}{-.2in}
\item {\emph{R-transform}} 
\end{itemize}
 The R-transform for free additive convolution in the context of the Free Probability Theory, is just the analogue of the logarithm of the Fourier transform for the additive convolution in the classical probability theory. Here, we adopt an analysis-friendly 
definition of the R-transform in terms of the Stieltjes transform. 
For a probability distribution $\sigma(x)$, we define its Blue function $K_\sigma(\omega)$ as the formal functional inverse of $-s_\sigma(\omega)$, i.e. 
\begin{eqnarray}
-s_\sigma(K_\sigma(\omega))=K_\sigma(-s_\sigma(\omega))=\omega. \label{061802}
\end{eqnarray}
Then the R-transform of $\sigma(x)$ is defined by 
\begin{eqnarray}
R_\sigma(\omega)=K_\sigma(\omega)-\frac{1}{\omega} \label{061803}
\end{eqnarray}
which is holomorphic at $\omega=0$. 
Actually, there is
\begin{eqnarray}
R_\sigma(0)=\int xd\sigma(x) \label{062001}
\end{eqnarray}
when $\sigma(x)$ is compactly supported. A main result concerning the R-transform we need, due to Voiculescu \cite{Voiculescu1986}, is the well-known identity on free additive convolution, namely
\begin{eqnarray*}
R_{\sigma_\xi\boxplus\sigma_\zeta}=R_{\sigma_\xi}+R_{\sigma_\zeta},
\end{eqnarray*}
if $\xi$ and $\zeta$ are two free bounded operators in some noncommutative probability space $(\mathcal{A},\phi)$, where $\sigma_\xi$ and $\sigma_\zeta$ are the distributions of $\xi$ and $\zeta$ respectively, while $\sigma_\xi\boxplus\sigma_\zeta$  is the distribution of $a+b$. Here ``$+$" stands for the addition in the algebra $\mathcal{A}$. According to the seminal work of Voiculescu \cite{Voiculescu1991}, we know that in the noncommutative probability space $(\text{Mat}_{n}(\mathcal{P}), n^{-1}\mathbb{E}\text{tr})$, two independent $n$ by $n$ random matrices $A$ and $B$, both possessing LSDs, are asymptotically free if at least one of them is orthogonally invariant, thus the LSD of $A+B$ can be obtained via those of $A$ and $B$. To be specific, denoting by $R_{A+B}(\omega)$ the R-transform of the LSD of $A+B$, and by $R_A(\omega)$ and $R_B(\omega)$ those of $A$ and $B$ respectively, we have
\begin{eqnarray}
R_{A+B}(\omega)=R_A(\omega)+R_B(\omega). \label{061801}
\end{eqnarray}
Consequently, (\ref{061801}) provides us a tractable way to obtain the Stieltjes transform of the LSD of $A+B$ via those of $A$ and $B$, in light of the relation between the R-transform and the Stieltjes transform, to wit,  (\ref{061802}) and (\ref{061803}). 
\section{Determinant equation for the outliers} In this section, we derive a determinant equation for the outliers, i.e. the eigenvalues larger than $b_r$. Firstly, we will show that the canonical correlation matrix in the finite rank case can be viewed as a finite rank perturbation of that in the null case. To this end, technically, we will need the additional assumption
\begin{eqnarray*}
r_i\in(0,1), \quad \text{for all}\quad i=1,\ldots,k.
\end{eqnarray*}
 The case of  $r_i=1$ for at least one $i$ can be made up via a continuity argument in the proof of Theorem \ref{thm.061901}, presented in Section 4. 

\subsection{Reformulation: a perturbation of the null case}
As mentioned above, the canonical correlation coefficients are invariant under the block diagonal transformation $(\mathbf{x}_i,\mathbf{y}_i)\to(A\mathbf{x}_i,B\mathbf{y}_i)$, for any $p\times p$ matrix $A$ and $q\times q$ matrix $B$, as long as both of them are nonsingular. Hence, to study $\lambda_i$, we can start with the following setting
\begin{eqnarray*}
\left(
\begin{array}{cc}
X\\
Y
\end{array}
\right)=\Sigma^{1/2}
\left(
\begin{array}{cc}
W_1\\
W_2
\end{array}
\right),\quad \Sigma=\left(
\begin{array}{ccc}
I_p & R\\
R' & I_q
\end{array}
\right),
\end{eqnarray*}
where $W_1\sim N(\mathbf{0}, I_n\otimes I_p)$ and $W_2\sim  N(\mathbf{0}, I_n\otimes I_q)$ are independent, and \[R=\text{diag}(\sqrt{r_1},\ldots, \sqrt{r_k})\oplus \mathbf{0}_{(p-k)\times (q-k)},\] e.g. see Muirhead \cite{Muirhead1982}, page 530, formula (7). Embarking from the above setting, we need to take several steps of block diagonal transformations further, which will finally lead us to a perturbation formula. Setting 
\begin{eqnarray*}
\alpha_i:=\frac{\sqrt{1+\sqrt{r_i}}+\sqrt{1-\sqrt{r_i}}}{2},\quad \beta_i:=\frac{\sqrt{1+\sqrt{r_i}}-\sqrt{1-\sqrt{r_i}}}{2},
\end{eqnarray*}
we can obtain after elementary calculation that
\begin{eqnarray*}
\left(
\begin{array}{ccc}
I_p & R\\
R' & I_q
\end{array}
\right)^{1/2}
=\left(\begin{array}{ccc}
P_1 & P_3\\
P_3' & P_2
\end{array}
\right),
\end{eqnarray*}
where $P_1=\text{diag}(\alpha_1,\ldots,\alpha_k)\oplus I_{p-k}$, $P_2=\text{diag}(\alpha_1,\ldots,\alpha_k)\oplus I_{q-k}$, and $P_3=\text{diag}(\beta_1,\ldots,\beta_k)\oplus \mathbf{0}_{(p-k)\times (q-k)}$. Taking the transformation $(X,Y)\to (P^{-1}_1X, P^{-1}_2Y)$ yields the setting
\begin{eqnarray}
\left(
\begin{array}{cc}
X\\
Y
\end{array}
\right)=\left(\begin{array}{ccc}
I_p & P\\
P' & I_q
\end{array}
\right)
\left(
\begin{array}{cc}
W_1\\
W_2
\end{array}
\right),  \label{071510}
\end{eqnarray}
where
\begin{eqnarray*}
P=\text{diag}(\beta_1/\alpha_1,\ldots, \beta_k/\alpha_k)\oplus \mathbf{0}_{(p-k)\times (q-k)}=: \text{diag}(\tau_1,\ldots, \tau_k)\oplus \mathbf{0}_{(p-k)\times (q-k)}.
\end{eqnarray*}
Note that $\tau_i\neq 1$ in case $r_i\neq 1$, according to the definitions of $\alpha_i,\beta_i$ and $\tau_i$. Now, we perform a further transformation.
Denoting the matrix $Q=\text{diag}(2\tau_1/(1+\tau_1^2),\ldots,2\tau_k/(1+\tau_k^2))\oplus \mathbf{0}_{(p-k)\times (q-k)}$, and defining
\begin{eqnarray*}
W:=(I-QP')W_1+(P-Q)W_2,
\end{eqnarray*}
 it is easy to check that $W$ and $Y$ are independent, and
 \begin{eqnarray*}
 X=W+QY.
 \end{eqnarray*}
Moreover, observe that 
 \begin{eqnarray*}
 &&R_1:=n^{-1}\mathbb{E}WW'=I_p+\text{diag}\left(\frac{\tau_1^4-3\tau_1^2}{1+\tau_1^2},\ldots,\frac{\tau_k^4-3\tau_k^2}{1+\tau_k^2}\right)\oplus\mathbf{0}_{p-k},\nonumber\\
&& R_2:=n^{-1}\mathbb{E}YY'=I_q+\text{diag}(\tau_1^2,\ldots,\tau_k^2)\oplus\mathbf{0}_{q-k}.
 \end{eqnarray*}
Since $\tau_i\neq 1$ for all $i=1,\ldots,k$, it is clear that $R_1$ is nonsingular.
Relying on this fact, we  can introduce our final transformation $(X,Y)\to (\tilde{X},\tilde{Y})$ with
 \begin{eqnarray*}
\tilde{Y}:=R_2^{-1/2}Y,\quad \tilde{X}:=R_1^{-1/2}X=\tilde{W}+R_1^{-1/2}QR_2^{1/2}\tilde{Y},
 \end{eqnarray*}
 where $\tilde{W}:=R_1^{-1/2}W$. Consequently, we have
 \begin{eqnarray*}
\tilde{W}\sim N(\mathbf{0},I_n\otimes I_p),\quad \tilde{Y}\sim N(\mathbf{0},I_n\otimes I_q),\quad \tilde{X}\sim N(\mathbf{0},I_n\otimes (I_p+R_1^{-1/2}QR_2Q'R_1^{-1/2})).
 \end{eqnarray*}
By construction, we see that $\tilde{W}$ and $\tilde{Y}$ are independent and
 \begin{eqnarray*}
 R_1^{-1/2}QR_2^{1/2}=\text{diag}(\frac{2\tau_1}{1-\tau_1^2},\ldots,\frac{2\tau_k}{1-\tau_k^2})\oplus \mathbf{0}_{(p-k)\times(q-k)}.
 \end{eqnarray*}
 Denoting $t_i=2\tau_i/(1-\tau_i^2)$ for $i=1,\ldots,n$, we can easily get the relation
 \begin{eqnarray}
 r_i=\frac{t_i^2}{1+t_i^2}, \label{062804}
 \end{eqnarray}
according to the definitions of $\alpha_i$, $\beta_i$ and $\tau_i$ above.
 Hence, it suffices to study the limiting behaviour of $\lambda_i$'s in terms of $t_i$'s instead. For simplicity, we recycle the notation $X$, $W$ and $Y$ to denote $\tilde{X}$, $\tilde{W}$ and $\tilde{Y}$ respectively. In addition, we use the notation \[T:= R_1^{-1/2}QR_2^{1/2}=\text{diag}(t_1,\ldots,t_k)\oplus \mathbf{0}_{(p-k)\times (q-k)}\] in the sequel. Therefore, we can start with the following setting: 
\begin{eqnarray}
\left(\begin{array}{ccc}
X\\
Y
\end{array}
\right)=
\left(\begin{array}{ccc}
W+TY\\
Y
\end{array}
\right). \label{052203}
\end{eqnarray}
By the above construction, we have
\begin{eqnarray*}
\Sigma_{\mathbf{x}\mathbf{x}}=I_p+TT',\quad \Sigma_{\mathbf{y}\mathbf{y}}=I_q,\quad \Sigma_{\mathbf{x}\mathbf{y}}=T,\quad \Sigma_{\mathbf{y}\mathbf{x}}=T'.
\end{eqnarray*}
 Now, analogously, we introduce the notation
\begin{eqnarray*}
S_{\mathbf{w}\mathbf{w}}=n^{-1}WW',\quad S_{\mathbf{w}\mathbf{y}}=n^{-1}WY',\quad S_{\mathbf{y}\mathbf{w}}=n^{-1}YW' .
\end{eqnarray*}
In light of (\ref{052203}), we have
\begin{eqnarray}
&S_{\mathbf{x}\mathbf{x}}=S_{\mathbf{w}\mathbf{w}}+TS_{\mathbf{y}\mathbf{w}}+S_{\mathbf{w}\mathbf{y}}T'+TS_{\mathbf{y}\mathbf{y}}T',&\nonumber\\
&S_{\mathbf{x}\mathbf{y}}=S_{\mathbf{w}\mathbf{y}}+TS_{\mathbf{y}\mathbf{y}},\quad S_{\mathbf{y}\mathbf{x}}=S_{\mathbf{y}\mathbf{w}}+S_{\mathbf{y}\mathbf{y}}T'.& \label{061809}
\end{eqnarray}
Hence, due to the assumption that $\text{rank}(T)=k$, $S_{\mathbf{x}\mathbf{x}}^{-1}S_{\mathbf{x}\mathbf{y}}S_{\mathbf{y}\mathbf{y}}^{-1}S_{\mathbf{y}\mathbf{x}}$ can be obviously viewed as a finite rank perturbation of $S_{\mathbf{w}\mathbf{w}}^{-1}S_{\mathbf{w}\mathbf{y}}S_{\mathbf{y}\mathbf{y}}^{-1}S_{\mathbf{y}\mathbf{w}}$.  Based on this fact, we can derive the crucial determinant equation for those $\lambda_i$'s which are not in the spectrum of $S_{\mathbf{w}\mathbf{w}}^{-1}S_{\mathbf{w}\mathbf{y}}S_{\mathbf{y}\mathbf{y}}^{-1}S_{\mathbf{y}\mathbf{w}}$ in the next subsection.
\subsection{The determinant equation}  Note that, with probability $1$, the eigenvalues of $S_{\mathbf{x}\mathbf{x}}^{-1}S_{\mathbf{x}\mathbf{y}}S_{\mathbf{y}\mathbf{y}}^{-1}S_{\mathbf{y}\mathbf{x}}$  are the solutions for $\lambda$ of the characteristic equation
\begin{eqnarray}
\det(S_{\mathbf{x}\mathbf{y}}S_{\mathbf{y}\mathbf{y}}^{-1}S_{\mathbf{y}\mathbf{x}}-\lambda S_{\mathbf{x}\mathbf{x}})=0. \label{072501}
\end{eqnarray}
In light of (\ref{061809}), it is equivalent to
\begin{eqnarray*}
\det(S_{\mathbf{w}\mathbf{y}}S_{\mathbf{y}\mathbf{y}}^{-1}S_{\mathbf{y}\mathbf{w}}-\lambda S_{\mathbf{w}\mathbf{w}}+(1-\lambda)\Delta)=0,
\end{eqnarray*}
where
\begin{eqnarray*}
\Delta=TS_{\mathbf{y}\mathbf{w}}+S_{\mathbf{w}\mathbf{y}}T'+TS_{\mathbf{y}\mathbf{y}}T'.
\end{eqnarray*}
Then, if $\lambda$ is not an eigenvalue of $S_{\mathbf{w}\mathbf{w}}^{-1}S_{\mathbf{w}\mathbf{y}}S_{\mathbf{y}\mathbf{y}}^{-1}S_{\mathbf{y}\mathbf{w}}$ but  an eigenvalue of $S_{\mathbf{x}\mathbf{x}}^{-1}S_{\mathbf{x}\mathbf{y}}S_{\mathbf{y}\mathbf{y}}^{-1}S_{\mathbf{y}\mathbf{x}}$, it must satisfy the following equation 
\begin{eqnarray}
\det(I_p+(1-\lambda)\Phi(\lambda)\Delta)=0,  \label{061901}
\end{eqnarray}
where 
\begin{eqnarray*}
\Phi(\lambda)\equiv\Phi_n(\lambda):=(S_{\mathbf{w}\mathbf{y}}S_{\mathbf{y}\mathbf{y}}^{-1}S_{\mathbf{y}\mathbf{w}}-\lambda S_{\mathbf{w}\mathbf{w}})^{-1}.
\end{eqnarray*}
Denoting $\boldsymbol{\theta}_1=\mathbf{0}_{p\times (p-q)}$ and $\boldsymbol{\theta}_2=\mathbf{0}_{(p-q)\times (p-q)}$ simply, we can write
\begin{eqnarray*}
TS_{\mathbf{y}\mathbf{w}}+S_{\mathbf{w}\mathbf{y}}T'+TS_{\mathbf{y}\mathbf{y}}T'&=&
\left(
\begin{array}{ccc}
T &\boldsymbol{\theta}_1
\end{array}
\right)
\left(
\begin{array}{c}
S_{\mathbf{y}\mathbf{w}}\\
\boldsymbol{\theta}_1'
\end{array}
\right)+\left(
\begin{array}{ccc}
S_{\mathbf{w}\mathbf{y}} &\boldsymbol{\theta}_1
\end{array}
\right)
\left(
\begin{array}{c}
T'\\
\boldsymbol{\theta}_2
\end{array}
\right)\nonumber\\
&&+\left(
\begin{array}{ccc}
T &\boldsymbol{\theta}_1
\end{array}
\right)
\left(
\begin{array}{ccc}
S_{\mathbf{y}\mathbf{y}} &\boldsymbol{\theta}_1\\
\boldsymbol{\theta}_1' & \boldsymbol{\theta}_2
\end{array}
\right)
\left(
\begin{array}{c}
T'\\
\boldsymbol{\theta}_1'
\end{array}
\right)
\end{eqnarray*}
Let $\mathbf{e}_i$ be the $p$-dimensional vector with zero coefficients except the $i$th coefficient equal to $1$, and define the vector
\begin{eqnarray*}
\mathbf{u}_i:=\left(
\begin{array}{ccc}
S_{\mathbf{w}\mathbf{y}} &\boldsymbol{\theta}_1
\end{array}
\right)\mathbf{e}_i,\quad i=1,\ldots,k.
\end{eqnarray*}
Then we can write
\begin{eqnarray*}
TS_{\mathbf{y}\mathbf{w}}+S_{\mathbf{w}\mathbf{y}}T'+TS_{\mathbf{y}\mathbf{y}}T'=\sum_{i=1}^k t_i\mathbf{e}_i\mathbf{u}_i'+\sum_{i=1}^kt_i\mathbf{u}_i\mathbf{e}_i'+\sum_{i=1}^kt_it_jS_{\mathbf{y}\mathbf{y}}(i,j) \mathbf{e}_i\mathbf{e}_j'
\end{eqnarray*}
Let
\begin{eqnarray*}
\chi_{ij}:=t_it_jS_{\mathbf{y}\mathbf{y}}(i,j), \quad i,j=1,\ldots,k.
\end{eqnarray*}
We have
\begin{eqnarray*}
\Delta=\sum_{i=1}^k(\chi_{ii}\mathbf{e}_i\mathbf{e}_i'+t_i\mathbf{e}_i\mathbf{u}_i'+t_i\mathbf{u}_i\mathbf{e}_i')+\sum_{i\neq j}\chi_{ij}\mathbf{e}_i\mathbf{e}_j'.
\end{eqnarray*}
Now we further introduce the following matrices
\begin{eqnarray}
&&\mathcal{A}_i=(\chi_{ii}\mathbf{e}_i, t_i\mathbf{e}_i, t_i\mathbf{u}_i),\quad \mathcal{B}_i=(\mathbf{e}_i,\mathbf{u}_i,\mathbf{e}_i),\quad \mathcal{C}_i=(\underbrace{\mathbf{e}_i,\ldots, \mathbf{e}_i}_{k-1}),i=1,\ldots,k, \nonumber\\
&&\mathcal{F}_1=(\chi_{12}\mathbf{e}_2,\ldots, \chi_{1k}\mathbf{e}_k), \quad \mathcal{F}_i=(\chi_{i1}\mathbf{e}_1,\ldots, \chi_{i,i-1}\mathbf{e}_{i-1},\chi_{i,i+1}\mathbf{e}_{i+1},\ldots,\chi_{ik}\mathbf{e}_k),\quad i= 2,\ldots,k. \label{071603}
\end{eqnarray}
Defining the matrices
\begin{eqnarray}
U:=(\mathcal{A}_1,\ldots, \mathcal{A}_k, \mathcal{F}_1,\ldots, \mathcal{F}_k), \quad V:=(\mathcal{B}_1,\ldots, \mathcal{B}_k, \mathcal{C}_1,\ldots, \mathcal{C}_k)', \label{071604}
\end{eqnarray}
one has the factorization
\begin{eqnarray*}
\Delta=UV.
\end{eqnarray*}
Now, recalling (\ref{061901}) and using the well-known identity $\det(I+AB)=\det(I+BA)$ which is valid for any matrices $A$ and $B$ as long as both $AB$ and $BA$ are square, yields the determinant equation
\begin{eqnarray}
\det\left(I_{k^2+2k}+(1-\lambda)V\Phi(\lambda)U
\right)=0. \label{0604}
\end{eqnarray}
Hence, if we want to locate the eigenvalues outside the interval $[d_\ell,d_r]$, it suffices to solve (\ref{0604}) and find the limits of its solutions when $n\to\infty$. 
\section{Proof of Theorem \ref{thm.061901}} In this section, we provide the proof of Theorem \ref{thm.061901} based on several lemmas, whose proofs will be postponed to the subsequent sections. Our discussion consists of two thoroughly different parts, aimed at (i) and (ii) of Theorem \ref{thm.061901} respectively: 
(1) for the outliers, we locate them by deriving the limits of the solutions of the equation (\ref{0604}), which requires several steps of calculations, along with some reduction techniques; (2) for the eigenvalues sticking to $d_r$, we simply use Weyl's interlacing property to get the conclusion. 
\begin{itemize}
\item {\emph{The outliers}}
\end{itemize}
As claimed, to locate the outliers, we start from the equation (\ref{0604}). For simplicity, let
\begin{eqnarray}
M_n(z):=I_{k^2+2k}+(1-z)V\Phi(z)U \label{071801}
\end{eqnarray}
which is a well defined matrix-valued function for all $z\in \mathbb{C}\setminus \text{Spec}(S_{\mathbf{w}\mathbf{w}}^{-1}S_{\mathbf{w}\mathbf{y}}S_{\mathbf{y}\mathbf{y}}^{-1}S_{\mathbf{y}\mathbf{w}})$. 
Intuitively, if $M_n(z)$ is {\emph{close}} to some deterministic matrix-valued function $M(z)$ in some sense, it is reasonable to expect that the solutions of (\ref{0604}) are close to those of the equation $\det M(z)=0$. Such an implication can be explicitly formulated in the {\emph {location lemma}} given later, see Lemma \ref{lem.062902}. Before stating it, some notation should be introduced, in order to put forward our limiting target $M(z)$. Set for  any positive constant $\eta$ two domains
\begin{eqnarray*}
&&\mathcal{D}_1\equiv\mathcal{D}_1(\eta):=\{z\in\mathbb{C}: -\eta\leq \Re z\leq d_r+\eta, |\Im z|\leq \eta\},\\
&&\mathcal{D}_2\equiv\mathcal{D}_2(\eta):=\{z\in \mathbb{C}: d_r+\frac{3}{2}\eta< \Re z< 2, |\Im z|<1\}.
\end{eqnarray*}
Observe that $\mathcal{D}_2\subset\mathbb{C}\setminus\mathcal{D}_1$. Define the functions $\ell(z), h(z), f(z): \mathbb{C}\setminus\mathcal{D}_1\to \mathbb{C}$ as
\begin{eqnarray*}
&&\ell(z):=\sqrt{(z-d_\ell)(z-d_r)}=\sqrt{z^2+(4c_1c_2-2c_1-2c_2)z+(c_2-c_1)^2},\nonumber\\\\
&&h(z):=\frac{c_1+c_2-z+\ell(z)}{2c_2},\qquad f(z):=\frac{(2c_1-1)z+(c_2-c_1)+\ell(z)}{2c_1(1-c_1)z},
\end{eqnarray*}
where the square root  for $\ell(z)$ is taken to satisfy $\ell(z)/z\to1$ as $z\to\infty$. It is elementary to see that $\ell(z),h(z)$ and $f(z)$ are all holomorphic on $\mathbb{C}\setminus\mathcal{D}_1$. Now , let
\begin{eqnarray*}
G_i(z):=\left(
\begin{array}{ccc}
t_i^2f(z) &t_i f(z) & 0\\
0 &0 & t_ih(z)\\
t_i^2 f(z) & t_i f(z) &0
\end{array}
\right)
\end{eqnarray*}
and define
\begin{eqnarray*}
M(z):=I_{k^2+2k}+G_1(z)\oplus\cdots\oplus G_k(z)\oplus \mathbf{0}_{k^2-k}.
\end{eqnarray*}
Denoting for any given $\eta>0$ and $\varepsilon>0$ two events (in the underlying $\sigma$-algebra $\mathcal{B}$):
\begin{eqnarray*}
&&\Xi_1\equiv\Xi_1(n,\eta):=\bigg{\{} ||S_{\mathbf{w}\mathbf{w}}^{-1}S_{\mathbf{w}\mathbf{y}}S_{\mathbf{y}\mathbf{y}}^{-1}S_{\mathbf{y}\mathbf{w}}||\leq d_r+\frac{\eta}{2}\bigg{\}},\nonumber\\\\
&&\Xi_2\equiv\Xi_2(n,\eta,\varepsilon):=\bigg{\{}\sup_{z\in \mathcal{D}_2}\sup_{i,j=1,\ldots,k^2+2k}|M_n(z)(i,j)-M(z)(i,j)|\leq \varepsilon\bigg{\}},
\end{eqnarray*}
Note that, Theorem \ref{thm.071501} tells us that $\Xi_1$ holds with overwhelming probability.  Moreover, 
we emphasize here, in $\Xi_1$, $M_n(z)$ is obviously holomorphic on $\mathbb{C}\setminus \mathcal{D}_1$.  For $\Xi_2$, we have the following lemma.
\begin{lem} \label{lem.062802}
For any given $\eta>0$ and $\varepsilon>0$, $\Xi_2$ holds with overwhelming probability.
\end{lem}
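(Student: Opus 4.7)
The plan is to reduce the uniform-in-$z$ estimate to a pointwise one via analyticity, and then to prove pointwise concentration of the finitely many entries of $M_n(z)$ using Gaussian concentration combined with a careful decomposition of the randomness in $W$ and $Y$. On the intersection of the event $\Xi_1$ of Theorem \ref{thm.071501} with the standard overwhelming-probability bound $\|S_{\mathbf{w}\mathbf{w}}^{-1}\|\le C$ (valid because $c_1\in(0,1)$), the spectrum of $S_{\mathbf{w}\mathbf{w}}^{-1}S_{\mathbf{w}\mathbf{y}}S_{\mathbf{y}\mathbf{y}}^{-1}S_{\mathbf{y}\mathbf{w}}$ is separated from $\mathcal{D}_2$ by a distance $\ge \eta/2$, so $\Phi(z)$ is holomorphic on $\mathcal{D}_2$ with $\|\Phi(z)\|\le C/\eta$. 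Since the columns of $U$ and the rows of $V$ are polynomially bounded in norm (being built from unit vectors, the columns $\mathbf{u}_i$ of $S_{\mathbf{w}\mathbf{y}}$, and the scalars $\chi_{ij}$), the matrix $M_n(z)$ is uniformly bounded on $\mathcal{D}_2$, and the Cauchy integral formula yields a polynomial bound on its entrywise $z$-derivative. Covering $\mathcal{D}_2$ by a net of size $n^{C}$ and taking a union bound reduces $\Xi_2$ to controlling $|M_n(z)(i,j)-M(z)(i,j)|\le \varepsilon/2$ for each fixed $z\in\mathcal{D}_2$ with overwhelming probability.

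For the pointwise estimate, each entry of $M_n(z)$ decomposes either into a coefficient $\chi_{ij}=t_it_jS_{\mathbf{y}\mathbf{y}}(i,j)$ (which concentrates to $t_i^2\delta_{ij}$ by classical Gaussian concentration for sample covariance entries) or into a bilinear form of the form $\mathbf{e}_i'\Phi(z)\mathbf{e}_j$, $\mathbf{e}_i'\Phi(z)\mathbf{u}_j$, or $\mathbf{u}_i'\Phi(z)\mathbf{u}_j$. The key observation is that $\mathbf{u}_i$, being the $i$-th column of $S_{\mathbf{w}\mathbf{y}}=n^{-1}WY'$, lies in the range of $WP_Y$ with $P_Y:=Y'(YY')^{-1}Y$, and the splitting $W=WP_Y+W(I-P_Y)$ makes $A:=n^{-1}WP_YW'$ and $B_1:=n^{-1}W(I-P_Y)W'$ conditionally independent given $Y$, with $\Phi(z)=((1-z)A-zB_1)^{-1}$. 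Conditional on $Y$ and on $WP_Y$, the matrix $B_1$ is an independent Wishart-type matrix whose distribution is invariant under orthogonal conjugation in the orthogonal complement of $\mathrm{span}(\mathbf{e}_1,\ldots,\mathbf{e}_k,\mathbf{u}_1,\ldots,\mathbf{u}_k)$. This rotational invariance, combined with the Hanson--Wright inequality applied to the conditionally Gaussian vector $\mathbf{u}_j\mid Y\sim N(0,n^{-1}S_{\mathbf{y}\mathbf{y}}(j,j)I_p)$, replaces each bilinear form by a normalized trace involving $\Phi(z)$ and $Y$, up to an error of size $n^{-1/2+o(1)}$ with overwhelming probability.

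The main obstacle is the algebraic identification of these limiting normalized traces with the specific functions $f(z)$ and $h(z)$ appearing in $M(z)$. The trace $p^{-1}\operatorname{tr}\Phi(z)$ is not directly the Stieltjes transform of the null canonical correlation matrix because of the extra $S_{\mathbf{w}\mathbf{w}}$ factor; writing $\Phi(z)=S_{\mathbf{w}\mathbf{w}}^{-1/2}(H-zI)^{-1}S_{\mathbf{w}\mathbf{w}}^{-1/2}$ with $H:=S_{\mathbf{w}\mathbf{w}}^{-1/2}S_{\mathbf{w}\mathbf{y}}S_{\mathbf{y}\mathbf{y}}^{-1}S_{\mathbf{y}\mathbf{w}}S_{\mathbf{w}\mathbf{w}}^{-1/2}$ and applying a resolvent identity couples this trace to the explicit Stieltjes transform of the MANOVA LSD from Theorem \ref{thm.070301} and that of the Marchenko--Pastur law of $S_{\mathbf{w}\mathbf{w}}$; algebraic simplification then produces $(1-z)^{-1}f(z)$. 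The trace arising in $\mathbf{u}_i'\Phi(z)\mathbf{u}_i$ is instead the Stieltjes transform at the origin of a free additive convolution of an inverse-Wishart-type matrix with a related Wishart-type matrix, which is exactly the setting for the R-transform apparatus of Section 2; carrying out the free-convolution algebra identifies the limit with $(1-z)^{-1}h(z)$. Once these two identifications are in hand, the vanishing of the off-diagonal cross terms and of the contributions indexed by $\mathcal{C}_i$ and $\mathcal{F}_i$ follows from the same rotational-invariance computation, as the relevant off-diagonal traces are $O(n^{-1/2})$ with overwhelming probability.
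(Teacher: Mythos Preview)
Your overall architecture---reduce to a pointwise estimate via analyticity and a net, then establish entrywise concentration and identify the limits via Stieltjes/R-transforms---matches the paper's, and your treatment of $\mathbf{e}_i'\Phi\mathbf{e}_j$ and $\mathbf{e}_i'\Phi\mathbf{u}_j$ can be made to work (the paper phrases the invariance more cleanly via the SVD $W/\sqrt{n}=U_{\mathbf{w}}\Lambda_{\mathbf{w}}V_{\mathbf{w}}'$, noting that $U_{\mathbf{w}}$ is Haar and independent of $(\Lambda_{\mathbf{w}},V_{\mathbf{w}},Y)$, so these bilinear forms become quadratic or linear forms in Haar columns against independent matrices). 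There is, however, a genuine gap in your handling of $\mathbf{u}_i'\Phi(z)\mathbf{u}_j$. You invoke Hanson--Wright for the conditionally Gaussian vector $\mathbf{u}_j\mid Y$, but $\Phi(z)=((1-z)A-zB_1)^{-1}$ depends on $W$ through \emph{both} $A=n^{-1}WP_YW'$ and $B_1$, while $\mathbf{u}_j=n^{-1}WY'\mathbf{e}_j$ is determined by $WP_Y$; hence $\mathbf{u}_j$ and $\Phi(z)$ are dependent through $A$ under any of the conditionings you propose, and Hanson--Wright does not apply. The rotational invariance of $B_1$ alone does not help either, since conditioning on $WP_Y$ fixes $A$ and destroys the orthogonal invariance of $\Phi$.

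The paper's way around this is a purely algebraic identity: setting $\mathcal{E}=n^{-1}YP_{\mathbf{w}}Y'$, $\mathcal{H}=n^{-1}Y(I-P_{\mathbf{w}})Y'$ and $\Psi(z)=((1-z)\mathcal{H}^{-1}-z\mathcal{E}^{-1})^{-1}$, one checks that $(1-z)S_{\mathbf{yw}}\Phi(z)S_{\mathbf{wy}}=\mathcal{E}+\Psi(z)$. This transfers the quadratic form $\mathbf{u}_i'\tilde\Phi\mathbf{u}_j$ to the $Y$-side, where the \emph{left} orthogonal invariance of $Y$ (not of $W$) now gives concentration of the $(i,j)$ entry of $\Psi$, and the R-transform computation for the free sum $(1-z)\mathcal{H}^{-1}-z\mathcal{E}^{-1}$ of two independent inverse-Wishart matrices yields the limit $\varrho(z)=h(z)-c_1$. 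Without this identity (or an isotropic deterministic-equivalent result for resolvents of free sums), your concentration step for $\mathbf{u}_i'\Phi\mathbf{u}_j$ is incomplete. A secondary point: to take expectations and run a normal-families argument on $\mathcal{D}_2\setminus\mathcal{D}_3$, the paper needs \emph{deterministic} operator-norm bounds on the resolvents, which it obtains by introducing $\varepsilon$-regularized and spectrally truncated variants $\Phi^\varepsilon,\Psi^\varepsilon,\mathring\Phi,\mathring\Psi$; your sketch relies only on the overwhelming-probability bound from $\Xi_1$, which is not enough to justify taking expectations of the unbounded functionals.
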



The proof of Lemma \ref{lem.062802} is our main technical task, which will be postponed to the next three sections. The following  location lemma is a direct consequence of Lemma \ref{lem.062802}. At first, we remark here, it will be clear that the solutions of the equation $\det M(z)=0$ can only be real.
\begin{lem}[The location lemma] \label{lem.062902} For any given $\eta>0$,  let $z_1>\cdots>z_{s}$ be the solutions in $(d_r+\eta,1)$ of the equation $\det M(z)=0$, with multiplicities $m_1,\ldots, m_{s}$ respectively. Then for any fixed $\varepsilon>0$ and each $i\in\{1,\ldots,s\}$, with overwhelming probability, there exists $z_{n,i,1}>\cdots>z_{n,i,\tilde{s}_i}$ with multiplicities $m_{i,1},\ldots,m_{i,\tilde{s}_i}$ respectively, satisfying $\sum_{j=1}^{\tilde{s}_i}m_{i,j}=m_i$, such that
\begin{eqnarray}
\sup_{j=1\ldots, \tilde{s}_i}|z_{n,i, j}- z_i|\leq \varepsilon,  \label{062807}
\end{eqnarray}
and $\cup_{i=1}^{s}\{z_{n,i,j},j=1,\ldots, \tilde{s}_i\}$ is the collection of all solutions (multiplicities counted) of the equation $\det M_n(z)=0$ in $(d_r+\eta,1)$.
\end{lem}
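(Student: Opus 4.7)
The plan is to deduce the location lemma from the entrywise uniform convergence $M_n \to M$ supplied by Lemma~\ref{lem.062802} via a standard Rouch\'e argument, using the structure of the problem to bypass the usual concern about spurious complex zeros.

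I will work on the event $\Xi_1(n,\eta/2) \cap \Xi_2(n,\eta/2,\varepsilon')$, which holds with overwhelming probability by Theorem~\ref{thm.071501} and Lemma~\ref{lem.062802} applied with $\eta/2$ in place of $\eta$. This substitution ensures that the target real interval $(d_r+\eta,1)$ sits strictly inside the domain of uniform convergence $\mathcal{D}_2(\eta/2)$, with positive distance from its boundary. On $\Xi_1(n,\eta/2)$, the matrix $\Phi(z) = (S_{\mathbf{w}\mathbf{y}} S_{\mathbf{y}\mathbf{y}}^{-1} S_{\mathbf{y}\mathbf{w}} - z S_{\mathbf{w}\mathbf{w}})^{-1}$, hence $M_n(z)$, is holomorphic on $\mathbb{C} \setminus \mathcal{D}_1(\eta/2)$; the explicit formulas for $\ell, h, f$ render $M(z)$ holomorphic on the same domain, so both $\det M_n$ and $\det M$ are holomorphic on a complex neighborhood of $(d_r+\eta,1)$.

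A key observation short-circuits the typical conjugate-pair concern: by the derivation preceding (\ref{0604}), for any $z \notin \mathrm{Spec}(S_{\mathbf{w}\mathbf{w}}^{-1} S_{\mathbf{w}\mathbf{y}} S_{\mathbf{y}\mathbf{y}}^{-1} S_{\mathbf{y}\mathbf{w}})$ the identity $\det M_n(z) = 0$ is equivalent to $z$ being a solution of the characteristic equation (\ref{072501}), i.e.\ an eigenvalue of $S_{\mathbf{x}\mathbf{x}}^{-1} S_{\mathbf{x}\mathbf{y}} S_{\mathbf{y}\mathbf{y}}^{-1} S_{\mathbf{y}\mathbf{x}}$, which is a real number in $[0,1]$. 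On $\Xi_1(n,\eta/2)$ the spectrum of the $\mathbf{w}$-matrix is confined to $\mathcal{D}_1(\eta/2)$ and so disjoint from $\mathcal{D}_2(\eta/2)$; hence every zero of $\det M_n$ in $\mathcal{D}_2(\eta/2)$ automatically lies on the real axis, with no need to argue via Schwarz reflection and then exclude conjugate pairs.

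With reality in hand I carry out the Rouch\'e step. For each zero $z_i \in (d_r+\eta,1)$ of $\det M$ of multiplicity $m_i$, choose $\delta \in (0,\varepsilon)$ small enough that the closed disks $\overline{B(z_i,\delta)}$ are pairwise disjoint, contained in $\mathcal{D}_2(\eta/2)$, and contain no other zero of $\det M$. By continuity and compactness there is $c > 0$ with $|\det M(z)| \geq c$ on $\bigcup_i \partial B(z_i,\delta)$ and on a compact subset of $\mathcal{D}_2(\eta/2)$ covering $[d_r+\eta,1] \setminus \bigcup_i B(z_i,\delta)$. Since the determinant is polynomial in the $(k^2+2k)^2$ entries, and the entries of $M_n$ differ from those of the bounded $M$ by at most $\varepsilon'$ on $\Xi_2$, the entrywise bound of Lemma~\ref{lem.062802} upgrades, for $\varepsilon'$ small, to $|\det M_n(z) - \det M(z)| < c$ uniformly on the compact set in question. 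Rouch\'e's theorem then yields exactly $m_i$ zeros of $\det M_n$ (counted with multiplicity) inside each $B(z_i,\delta)$ and no zeros on the complementary compact set; by the preceding paragraph these are real, so they can be enumerated as $z_{n,i,1} > \cdots > z_{n,i,\tilde{s}_i}$ with $\sum_j m_{i,j} = m_i$ and $|z_{n,i,j}-z_i| \leq \delta < \varepsilon$, giving (\ref{062807}) and exhausting the zeros of $\det M_n$ in $(d_r+\eta,1)$. The main obstacle in the overall program is of course Lemma~\ref{lem.062802} itself, whose proof occupies the bulk of Sections~5--7; once that uniform entrywise convergence is granted, the present deduction is the standard Rouch\'e packaging, made especially clean by the real-eigenvalue observation of the third paragraph.
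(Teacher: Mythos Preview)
Your proposal is correct and follows essentially the same route as the paper's own proof: both argue holomorphy of $M_n$ and $M$ on the relevant domain, upgrade the entrywise uniform convergence of Lemma~\ref{lem.062802} to uniform closeness of the determinants via the polynomial dependence on bounded entries, invoke the characteristic equation~(\ref{072501}) to see that all zeros of $\det M_n$ in the region must be real, and then apply Rouch\'e's theorem. Your version is somewhat more careful in managing the domain buffer (using $\eta/2$) and in spelling out the disk construction, but the argument is the same.
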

\begin{proof}[Proof of Lemma \ref{lem.062902} with Lemma \ref{lem.062802} granted] At first, as mentioned above, in $\Xi_1$, $M_n(z)$ is holomorphic on $\mathbb{C}\setminus \mathcal{D}_1$. Moreover, by (i) of Lemma  \ref{lem.071001} below and the definition of $M_n(z)$ in (\ref{071801}), one sees that $||M_n(z)||$ is bounded uniformly on $\mathcal{D}_2$ with overwhelming probability, hence, the entries of $M_n(z)$ are all bounded in magnitude with overwhelming probability as well. In addition, it is clear that $M(z)$ is holomorphic and its entries are also bounded in magnitude on $\mathcal{D}_2$.
Hence, Lemma \ref{lem.062802} implies that with overwhelming probability,
\begin{eqnarray*}
\sup_{z\in \mathcal{D}_2}|\det M_n(z)-\det M(z)|\leq C\varepsilon
\end{eqnarray*}
for some positive constant $C$, taking into account the fact that the determinant is a multivariate polynomial of the matrix entries. It is obvious that $\det M_n(z)$  only has real roots since the equation (\ref{072501}) does. Then by using Rouche's theorem, we can immediately get (\ref{062807}).
\end{proof}
 Now, with the aid of Lemma \ref{lem.062902}, we prove (i) of Theorem \ref{thm.061901} in case $r_i<1$ for all $i\in\{1,\ldots,k\}$. Then, we extend the result to the case of $r_i=1$ for at least one $i\in\{1,\ldots,k\}$.
\begin{proof}[Proof of (i) of Theorem \ref{thm.061901} with $r_i$'s  less than $1$] According to Lemma \ref{lem.062902}, it suffices to solve $\det M(z)=0$ in $(d_r+\eta, 1)$ to get $z_i$, for sufficiently small $\eta>0$. By the definition of $M(z)$, we only need to solve the equation
\begin{eqnarray}
\det(I_3+G_i)=1+t_i^2f(z)-t_i^2f(z)h(z)=0. \label{071601}
\end{eqnarray}
It will be clear that there is a unique simple solution for the above equation. We denote it by $\gamma_i$ in the sequel.
Substituting the definitions of $f(z)$ and $h(z)$ into (\ref{071601}), we arrive at 
\begin{eqnarray}
\gamma_i-(c_1+c_2)-\ell(\gamma_i)=2c_1c_2 t_i^{-2}. \label{0623001}
\end{eqnarray}
According to the definition of $\ell(z)$, it suffices to find the solution of the equation 
\begin{eqnarray}
\gamma_i^2+(4c_1c_2-2c_1-2c_2)\gamma_i+(c_1-c_2)^2=(\gamma_i-c_1-c_2-2t^{-2}_ic_1c_2)^2 \label{0623002}
\end{eqnarray}
under the restriction that
\begin{eqnarray}
\gamma_i-(c_1+c_2)-2c_1c_2 t_i^{-2}\geq 0. \label{062901}
\end{eqnarray}
Solving (\ref{0623002}), we can get
\begin{eqnarray}
\gamma_i=\frac{(1+t^{-2}_ic_1)(1+t^{-2}_ic_2)}{1+t^{-2}_i}=r_i(1-c_1+c_1r_i^{-1})(1-c_2+c_2r_i^{-1}). \label{060901}
\end{eqnarray}
Note that
\begin{eqnarray}
\gamma_i&=&c_1(1-c_2)+c_2(1-c_1)+\frac{(1-c_1)(1-c_2)}{1+t^{-2}_i}+(1+t^{-2}_i)c_1c_2\nonumber\\
&&\geq c_1(1-c_2)+c_2(1-c_1)+2\sqrt{c_1c_2(1-c_1)(1-c_2)}=d_r \label{0607002}
\end{eqnarray}
Moreover, equality holds in the second step of (\ref{0607002})  only if
\begin{eqnarray}
(1+t^{-2}_i)^2=\frac{(1-c_1)(1-c_2)}{c_1c_2}\Longrightarrow t_i=\left[\frac{c_1c_2+\sqrt{c_1c_2(1-c_1)(1-c_2)}}{1-c_1-c_2}\right]^{1/2}:=t_c. \label{060801}
\end{eqnarray}
In addition, it is easy to check that when $t_i<t_c$, (\ref{062901}) fails. Hence, (\ref{0623001}) has solution only if $t_i\geq t_c$, with the solution $\gamma_i$ given by (\ref{060901}).  
Recalling the definition of $r_c$ in (\ref{062805}), It is elementary to see that
\begin{eqnarray*}
r_c=\frac{t_c^2}{1+t_c^2}.
\end{eqnarray*}
By the fact that $t\to t^2/(1+t^2)$ is an increasing function in $t\in [0,\infty)$, together with the relation (\ref{062804}) and the assumption (\ref{062806}), we see that\begin{eqnarray*}
r_i>r_c\Longrightarrow t_i>t_c\Longrightarrow \gamma_i>d_r,\quad i=1,\ldots,k_0.
\end{eqnarray*}
Hence, there exists some small $\eta>0$ such that  $\gamma_i> d_r+\eta$ for $i=1,\ldots, k_0$.
Now, what remains is to check $\gamma_i < 1$.
By (\ref{060901}), we see that $\gamma_i < 1$ is equivalent to
\begin{eqnarray*}
(1+t^{-2}_ic_1)(1+t^{-2}_ic_2)< 1+t^{-2}_i,
\end{eqnarray*}
which requires
\begin{eqnarray}
t_i> \sqrt{\frac{c_1c_2}{1-c_1-c_2}}.\label{0607003}
\end{eqnarray}
Note that $t_i>t_c$ automatically guarantees (\ref{0607003}). Then by (\ref{062807}) in Lemma \ref{lem.062802}, we get that 
\begin{eqnarray*}
\lambda_i\to \gamma_i, \quad \text{a.s.},\quad \text{for} \quad i=1,\ldots, k_0.
\end{eqnarray*}
Hence, we conclude the proof of (i) of Theorem  \ref{thm.061901} when $r_i<1$ for all $i=1,\ldots, k$.
\end{proof}
To extend the conclusion to the case that there is some $r_i=1$, we mention below the well known large deviation result of the extreme eigenvalues of Wishart matrices. Assume that $S\sim \text{Wishart}_{n_1}(I_{n_1}, n_2)$ for some $n_1:=n_1(n)$ and $n_2:=n_2(n)$ satisfying $n_1/n\to y_1\in(0,1)$, $n_2/n\to y_2\in(0,1)$  as $n$ tends to infinity, and $y:=y_1/y_2\in (0,1)$. Denoting $\lambda_1(n_2^{-1}S)$ and $\lambda_{n_1}(n_2^{-1}S)$ the largest and the smallest eigenvalues of $n_2^{-1}S$ respectively, it is well known that for any given positive number $\varepsilon>0$,
\begin{eqnarray}
(1+\sqrt{y})^2+\varepsilon\geq \lambda_1(n_2^{-1}S)\geq \lambda_{n_1}(n_2^{-1}S)\geq (1-\sqrt{y})^2-\varepsilon \label{0726100}
\end{eqnarray}
holds with overwhelming probability. For instance, one can refer to Theorem 5.9 of \cite{BS2006}, or Theorem 3.1 of \cite{PY2014}, for more delicate description, under much more general distribution assumption.
\begin{proof}[Proof of (i) of Theorem \ref{thm.061901} with $r_i=1$ for some $i$] We assume that there is an positive integer $k'\in \{1,\ldots, k_0\}$, such that $1=r_1=\ldots=r_{k'}>r_{k'+1}$. Note that, if $r_i=1$, there exists a pair of vectors $\boldsymbol{\mu}_i$ and $\boldsymbol{\nu}_i$ such that $X'\boldsymbol{\mu}_i=Y'\boldsymbol{\nu}_i$, which implies that
\begin{eqnarray*}
S_{\mathbf{xx}}^{-1}S_{\mathbf{xy}}S_{\mathbf{yy}}^{-1}S_{\mathbf{yx}}\boldsymbol{\mu}_i=\boldsymbol{\mu}_i,
\end{eqnarray*}
thus $\lambda_i=\gamma_i=r_i=1$, along with the corresponding eigenvector $\boldsymbol{\mu}_i$. Hence, $\lambda_i=1$ deterministically for $i=1,\ldots, k'$. However, such an observation does not tell us the convergence of $\lambda_i$ for any fixed $i\geq k'+1$. In the sequel, we use a continuity argument to make up this issue.
Note that in Section 3.1, up to (\ref{071510}), we do not need the assumption that all $r_i<1$. Hence, we can and do work with the setting (\ref{071510}). Recall the notation $P=\text{diag}(\tau_1,\ldots, \tau_k)\oplus \mathbf{0}_{(p-k)\times (q-k)}$.  Then by definition, one has $\tau_1=\cdots=\tau_{k'}=1$. We define for any fixed $\varepsilon>0$ a modification of $P$, namely $P^\varepsilon$ obtained by replacing each $\tau_1=\cdots=\tau_{k'}=1$ by $1-\varepsilon$. Correspondingly, we introduce the modified sample canonical correlation matrix $(S_{\mathbf{xx}}^\varepsilon)^{-1}S_{\mathbf{xy}}^\varepsilon(S_{\mathbf{yy}}^\varepsilon)^{-1}S_{\mathbf{yx}}^\varepsilon$, obtained from $S_{\mathbf{xx}}^{-1}S_{\mathbf{xy}}S_{\mathbf{yy}}^{-1}S_{\mathbf{yx}}$ via replacing $P$ by $P^\varepsilon$ in $X=W_1+PW_2$ and $Y=P'W_1+W_2$. Now, we claim that with overwhelming probability,
\begin{eqnarray}
\max\{||(S_{\mathbf{xx}}^\varepsilon)^{-1}-S_{\mathbf{xx}}^{-1}||, ||(S_{\mathbf{yy}}^\varepsilon)^{-1}-S_{\mathbf{yy}}^{-1}||, ||S_{\mathbf{xy}}^\varepsilon-S_{\mathbf{xy}}||\}\leq C\varepsilon \label{071805}
\end{eqnarray}
for some positive constant $C$. We verify the bound for $||(S_{\mathbf{xx}}^\varepsilon)^{-1}-S_{\mathbf{xx}}^{-1}||$ in the sequel, the other two can be handled analogously. Note that
\begin{eqnarray*}
||(S_{\mathbf{xx}}^\varepsilon)^{-1}-S_{\mathbf{xx}}^{-1}||\leq ||(S_{\mathbf{xx}}^\varepsilon)^{-1}||\cdot||S_{\mathbf{xx}}^{-1}||\cdot||S_{\mathbf{xx}}^\varepsilon-S_{\mathbf{xx}}||\leq C ||S_{\mathbf{xx}}^\varepsilon-S_{\mathbf{xx}}||
\end{eqnarray*}
with overwhelming probability, for some positive constant $C$. Here, in the last step,  we used the facts that $c_1<1$ and  both $S_{\mathbf{xx}}^\varepsilon$ and $S_{\mathbf{xx}}$ are Wishart matrices with positive-definite population covariance matrices, which imply via (\ref{0726100}) that the smallest eigenvalues of them are both bounded below by some positive constant with overwhelming probability. Then by definition, 
\begin{eqnarray*}
S_{\mathbf{xx}}^\varepsilon-S_{\mathbf{xx}}=\frac{1}{n}[(P^\varepsilon-P)W_2W_1'+W_1W_2'(P^\varepsilon-P)'+(P^\varepsilon W_2W_2' P^\varepsilon-P W_2W_2' P)],
\end{eqnarray*}
whose operator norm can be bounded by $C\varepsilon$ for some positive constant $C$ with overwhelming probability, taking into account the facts that $||P-P^\varepsilon||\leq \varepsilon$ and $||W_1/\sqrt{n}||, ||W_2/\sqrt{n}||\leq C$ for some positive constant $C$ with overwhelming probability, in light of (\ref{0726100}). Following from (\ref{071805}), it is easy to deduce that for some positive constant $C$,
\begin{eqnarray*}
||(S_{\mathbf{xx}}^\varepsilon)^{-1}S_{\mathbf{xy}}^\varepsilon(S_{\mathbf{yy}}^\varepsilon)^{-1}S_{\mathbf{yx}}^\varepsilon-S_{\mathbf{xx}}^{-1}S_{\mathbf{xy}}S_{\mathbf{yy}}^{-1}S_{\mathbf{yx}}||\leq C\varepsilon
\end{eqnarray*}
holds with overwhelming probability. Since $\varepsilon$ can be chosen to be arbitrarily small,  $\lambda_i$ is arbitrarily close to  $\gamma_i$ almost surely, for $i\in \{k'+1,k_0\}$. Hence, we prove the case when there is  $r_i=1$ for at least one $i\in\{1,\ldots, k\}$.
\end{proof}
Now, for (i) of Theorem \ref{thm.061901}, what remains is to prove Lemma \ref{lem.062802}. We perform it in the following sections. Before that, we state the proof for (ii) of Theorem \ref{thm.061901}.
\begin{itemize}
\item {\emph{The Sticking eigenvalues}} 
\end{itemize}
\begin{proof}[Proof of (ii) of Theorem \ref{thm.061901}]
At first, according to the proof of (i) of Theorem \ref{thm.061901}, we see that with overwhelming probability, there are exactly $k_0$ largest eigenvalues  (multiplicities counted) of $S_{\mathbf{xx}}^{-1}S_{\mathbf{xy}}S_{\mathbf{yy}}^{-1}S_{\mathbf{yx}}$ in the interval $(d_r+\eta,1]$, for any sufficiently small $\eta>0$. Hence, we see that $\limsup_{n\to\infty}\lambda_i\leq d_r+\eta$ for all $i\geq k_0+1$ almost surely. Moreover, by the Weyl's interlacing property and (\ref{071501}), we always have $\liminf_{n\to\infty}\lambda_i\geq d_r-\eta$ almost surely for any fixed $i$. Since $\eta$ can be arbitrarily small, we get the conclusion that $\lambda_i\stackrel{a.s.}\to d_r$ for any fixed $i\geq k_0+1$.  Hence, we complete the proof.
\end{proof}
\section{Modification and Reduction}
To prove $M_n(z)\to M(z)$ in the sense of Lemma \ref{lem.062802}, there will be several steps which can be simplified by introducing further slight modifications, to bring in deterministic boundedness of the matrix $M_n(z)$ on the region of interest. To this end, we introduce an entrywise modification of $M_n(z)$, resulting in a proxy, namely $M^\varepsilon_n(z)$, possessing the desired boundedness. Then, the uniform convergence of the non-negligible entries of $M^\varepsilon_n(z)$ on $\mathcal{D}_2$ can be split into two parts: (1)  a concentration estimate of $M_n(z)(i,j)$ around its mean value $\mathbb{E}M^\varepsilon_n(z)(i,j)$; (2) convergence of $\mathbb{E}M^\varepsilon_n(z)(i,j)$. It turns out that the uniform convergence of $\mathbb{E}M^\varepsilon_n(z)(i,j)$ towards those of $M(z)(i,j)$ on $\mathcal{D}_2$ can be significantly reduced to that on the interval $[1+\eta,2]$ (say). For the latter, we can prove it simply by using the Stieltjes transform and R-transform scheme. In the sequel, we unfold this strategy in detail.
\subsection{Modification of $M_n(z)$}
 For simplicity, we use the notation
\begin{eqnarray*}
\tilde{\Phi}(z)\equiv\tilde{\Phi}_n(z):=(1-z)\Phi(z).
\end{eqnarray*}
Now, according to  (\ref{071603})-(\ref{0604}), the entries in $M_n(z)-I$ are multiples of the quantities of the following three types,
\begin{eqnarray}
\mathbf{e}_i'\tilde{\Phi}(z)\mathbf{u}_j,\quad \mathbf{e}_i'\tilde{\Phi}(z)\mathbf{e}_j,\quad \mathbf{u}_i'\tilde{\Phi}(z)\mathbf{u}_j, \label{062902}
\end{eqnarray}
 and the multipliers can only be $\chi_{ij}$ or $t_i$, where $i,j\in\{1,\ldots, k\}$. A very elementary observation on $\chi_{ij}$ is as follows.
For any given $\varepsilon>0$, we have
 \begin{eqnarray}
|\chi_{ij}-\delta_{ij}t_i t_j|\leq \varepsilon \label{lem.071601}
 \end{eqnarray}
 with overwhelming probability, where $\delta_{ij}$ stands for the Kronecker delta function.
In the sequel, we will introduce some slight modifications on $\mathbf{e}_i'\tilde{\Phi}(z)\mathbf{e}_i$ and  $\mathbf{u}_i'\tilde{\Phi}(z)\mathbf{u}_i$. We stress here, we will always keep the multipliers $\chi_{ij}$'s and $t_i$'s unchanged in the modifications. Hence, we just lazily regard the terms listed in (\ref{062902}) as the entries of $M_n(z)-I$ in the sequel. 

For brevity, we simply write $\Phi(z)$ and $\tilde{\Phi}(z)$ as $\Phi$ and $\tilde{\Phi}$ when there is no confusion. By definitions, we have
\begin{eqnarray}
\mathbf{e}_i'\tilde{\Phi}\mathbf{u}_j=(\tilde{\Phi}S_{\mathbf{w}\mathbf{y}})(i,j), \quad \mathbf{e}_i'\tilde{\Phi}\mathbf{e}_j=\tilde{\Phi}(i,j),\quad \mathbf{u}_i'\tilde{\Phi}\mathbf{u}_j=(S_{\mathbf{y}\mathbf{w}}\tilde{\Phi} S_{\mathbf{w}\mathbf{y}})(i,j), \label{071001}
\end{eqnarray}
At first, we perform a deduction for the matrix $S_{\mathbf{y}\mathbf{w}}\Phi S_{\mathbf{w}\mathbf{y}}$. To this end, we introduce the matrix
\begin{eqnarray*}
\Upsilon\equiv\Upsilon(z):=(S_{\mathbf{y}\mathbf{w}}S_{\mathbf{w}\mathbf{w}}^{-1}S_{\mathbf{w}\mathbf{y}}-z S_{\mathbf{y}\mathbf{y}})^{-1}.
\end{eqnarray*} 
It follows from the definition of $\Phi$ that
\begin{eqnarray*}
&&S_{\mathbf{y}\mathbf{y}}^{-1/2}S_{\mathbf{y}\mathbf{w}}\Phi S_{\mathbf{w}\mathbf{y}}S_{\mathbf{y}\mathbf{y}}^{-1/2}=S_{\mathbf{y}\mathbf{y}}^{-1/2}S_{\mathbf{y}\mathbf{w}}S_{\mathbf{w}\mathbf{w}}^{-1/2}(S_{\mathbf{w}\mathbf{w}}^{-1/2}S_{\mathbf{w}\mathbf{y}}S_{\mathbf{y}\mathbf{y}}^{-1}S_{\mathbf{y}\mathbf{w}}S_{\mathbf{w}\mathbf{w}}^{-1/2}-z I_p)^{-1}S_{\mathbf{w}\mathbf{w}}^{-1/2}S_{\mathbf{w}\mathbf{y}}S_{\mathbf{y}\mathbf{y}}^{-1/2}\nonumber\\\\
&&=S_{\mathbf{y}\mathbf{y}}^{-1/2}S_{\mathbf{y}\mathbf{w}}S_{\mathbf{w}\mathbf{w}}^{-1}S_{\mathbf{w}\mathbf{y}}S_{\mathbf{y}\mathbf{y}}^{-1/2}(S_{\mathbf{y}\mathbf{y}}^{-1/2}S_{\mathbf{y}\mathbf{w}}S_{\mathbf{w}\mathbf{w}}^{-1}S_{\mathbf{w}\mathbf{y}}S_{\mathbf{y}\mathbf{y}}^{-1/2}-zI_q)^{-1}= I_q+zS_{\mathbf{y}\mathbf{y}}^{1/2}\Upsilon S_{\mathbf{y}\mathbf{y}}^{1/2},
\end{eqnarray*}
from which we immediately get the identity
\begin{eqnarray}
S_{\mathbf{y}\mathbf{w}}\Phi S_{\mathbf{w}\mathbf{y}}=S_{\mathbf{y}\mathbf{y}}+z S_{\mathbf{y}\mathbf{y}}\Upsilon S_{\mathbf{y}\mathbf{y}}. \label{070201}
\end{eqnarray}
Now we introduce the following two $n$ by $n$ projection matrices
\begin{eqnarray*}
P_\mathbf{w}:=W'(WW')^{-1}W,\quad P_\mathbf{y}:=Y'(YY')^{-1}Y.
\end{eqnarray*}
Obviously, $\text{rank}(P_\mathbf{w})=p$ and $\text{rank}(P_\mathbf{y})=q$ almost surely. Further, we define the following scaled Wishart matrices
\begin{eqnarray*}
E:=n^{-1} WP_\mathbf{y}W', \quad H:=n^{-1} W(I_n-P_\mathbf{y})W',\quad \mathcal{E}:=n^{-1}YP_\mathbf{w}Y',\quad \mathcal{H}:=n^{-1} Y(I_n-P_\mathbf{w})Y'.
\end{eqnarray*}
By Cochran's Theorem, we know that $E$ and $H$ are independent, so are $\mathcal{E}$ and $\mathcal{H}$. In addition,
\begin{eqnarray*}
&&nE\sim \text{Wishart}_p(I_p, q),\quad nH\sim\text{Wishart}_p(I_p, n-q),\nonumber\\
&& n\mathcal{E}\sim \text{Wishart}_q( I_q, p),\quad n\mathcal{H}\sim \text{Wishart}_q(I_q, n-p).
\end{eqnarray*}
By the above notation, we have the following representations:
\begin{eqnarray*}
S_{\mathbf{w}\mathbf{y}}S_{\mathbf{y}\mathbf{y}}^{-1}S_{\mathbf{y}\mathbf{w}}=E,\quad S_{\mathbf{w}\mathbf{w}}=E+H, \quad S_{\mathbf{y}\mathbf{w}}S_{\mathbf{w}\mathbf{w}}^{-1}S_{\mathbf{w}\mathbf{y}}=\mathcal{E},\quad S_{\mathbf{y}\mathbf{y}}=\mathcal{E}+\mathcal{H},
\end{eqnarray*}
which implies
\begin{eqnarray}
&&\Phi=(E-zS_{\mathbf{w}\mathbf{w}})^{-1}=((1-z)E-z H)^{-1}, \nonumber\\ \nonumber\\
 &&\Upsilon=(\mathcal{E}-zS_{\mathbf{y}\mathbf{y}})^{-1}=((1-z)\mathcal{E}-z \mathcal{H})^{-1},\label{061911}
\end{eqnarray}
and thus
\begin{eqnarray}
S_{\mathbf{y}\mathbf{y}}\Upsilon S_{\mathbf{y}\mathbf{y}}&=&(\mathcal{E}+\mathcal{H})((1-z)\mathcal{E}-z \mathcal{H})^{-1}(\mathcal{E}+\mathcal{H})\nonumber\\ \nonumber\\
&=&(1-z)^{-1}\mathcal{E}-z^{-1}\mathcal{H}+(z(1-z))^{-1}((1-z)\mathcal{H}^{-1}-z \mathcal{E}^{-1})^{-1} \label{061912}
\end{eqnarray}
Following from (\ref{070201}) and (\ref{061912}) we obtain
\begin{eqnarray*}
S_{\mathbf{y}\mathbf{w}}\tilde{\Phi}S_{\mathbf{w}\mathbf{y}}=(1-z)S_{\mathbf{y}\mathbf{w}}\Phi S_{\mathbf{w}\mathbf{y}}=\mathcal{E}+((1-z)\mathcal{H}^{-1}-z \mathcal{E}^{-1})^{-1}.
\end{eqnarray*}
In the sequel, we use the notation
\begin{eqnarray*}
\Psi\equiv \Psi_n(z):=((1-z)\mathcal{H}^{-1}-z \mathcal{E}^{-1})^{-1},
\end{eqnarray*}
thus $S_{\mathbf{y}\mathbf{w}}\tilde{\Phi}S_{\mathbf{w}\mathbf{y}}=\mathcal{E}+\Psi$.  In light of (\ref{071001}), it suffices to study the following quantities
\begin{eqnarray*}
(\tilde{\Phi}S_{\mathbf{w}\mathbf{y}})(i,j), \quad \tilde{\Phi}(i,j),\quad \mathcal{E}(i,j)+\Psi(i,j).
\end{eqnarray*}
Now, we introduce  the modifications of $\Phi $ and $\Psi$ we will work with in the sequel. Define for any fixed small positive $\varepsilon>0$
\begin{eqnarray*}
\Phi^\varepsilon :=((1-z)E-z H-z\varepsilon I_p)^{-1},\quad \Psi^\varepsilon:= ((1-z)\mathcal{H}^{-1}-z \mathcal{E}^{-1}-z\varepsilon I_q))^{-1}
\end{eqnarray*}
Analogously, we write $\tilde{\Phi}^{\varepsilon}=(1-z)\Phi^\varepsilon$. Accordingly, we define a modification of $M_n(z)$, namely, $M^\varepsilon_n(z)$, obtained by the following replacement on the main factors of the non-negligible entries of $M_n(z)$:
\begin{eqnarray*}
\tilde{\Phi} (i,i)\to \tilde{\Phi}^{\varepsilon}(i,i),\quad (S_{\mathbf{y}\mathbf{w}}\Phi S_{\mathbf{w}\mathbf{y}})(i,i)\to \mathcal{E}(i,i)+\Psi^\varepsilon(i,i), \quad \text{for all} \quad i=1,\ldots, k.
\end{eqnarray*}
As mentioned above, we do not alter the multipliers $\chi_{ij}$'s and $t_i$'s in the entries. Moreover, we also retain $ (\tilde{\Phi} S_{\mathbf{w}\mathbf{y}})(i,j)$ for all $(i,j)$ pair, as well as $\tilde{\Phi} (i,j)$ and $(S_{\mathbf{y}\mathbf{w}}\Phi S_{\mathbf{w}\mathbf{y}})(i,j)$ with $i\neq j$ in the modification.
For technical reason, we also need another variants of $\Phi$ and $\Psi$, namely $\mathring{\Phi}\equiv \mathring{\Phi}_n(z)$ and $\mathring{\Psi}\equiv\mathring{\Psi}_n(z)$. To define them, we write the spectral decomposition of $E$, $H$, $\mathcal{E}$ and $\mathcal{H}$ by
\begin{eqnarray*}
E=U_{e1}\Lambda_{e1}U_{e1}', \quad H=U_{h1}\Lambda_{h1}U_{h1}',\quad \mathcal{E}=U_{e2}\Lambda_{e2}U_{e2}', \quad H=U_{h2}\Lambda_{h2}U_{h2}'.
\end{eqnarray*}
Now we introduce the truncation function $\chi_a^b:\mathbb{R}\to\mathbb{R}$ with two real numbers $a<b$, defined as 
\begin{eqnarray*}
\chi_a^b(x):=\left\{
\begin{array}{ccc}
x, &\text{if}\quad x\in[a,b],\\
a, &\text{if}\quad x< a,\\
b, &\text{if}\quad x>b
\end{array}
\right.
\end{eqnarray*}
Now we introduce three positive numbers
 \begin{eqnarray*}
 \kappa_1:=\frac{(\sqrt{1-c_2}-\sqrt{c_1})^2}{2},\quad \kappa_2:= \frac{(\sqrt{c_1}-\sqrt{c_2})^2}{2},\quad \kappa_3:=\frac{(\sqrt{1-c_1}-\sqrt{c_2})^2}{2},
 \end{eqnarray*}
 whereby we can define
\begin{eqnarray*}
&&\mathring{E}=\chi_0^4(E)=U_{e1}\chi_0^4(\Lambda_{e1})U_{e1}',\quad \mathring{H}=\chi_{\kappa_1}^4(H)=U_{h1}\chi_{\kappa_1}^4(\Lambda_{h1})U_{h1}'\nonumber\\
&&\mathring{\mathcal{E}}=\chi_{\kappa_2}^4(\mathcal{E})=U_{e2}\chi_{\kappa_2}^4(\Lambda_{e2})U_{e2}', \quad \mathring{\mathcal{H}}=\chi_{\kappa_3}^4(\mathcal{H})=U_{h2}\chi_{\kappa_3}^4(\Lambda_{h2})U_{h2}'.
\end{eqnarray*}
It  is easy to check that $\mathring{E}=E$, $\mathring{H}=H$, $\mathring{\mathcal{E}}=\mathcal{E}$ and $\mathring{\mathcal{H}}=\mathcal{H}$ all hold with overwhelming probability, according to (\ref{0726100}) and Assumption \ref{ass.070301}. Under these notations, we can define
\begin{eqnarray*}
\mathring{\Phi}:=((1-z)\mathring{E}-z\mathring{H})^{-1},\quad \mathring{\Psi}:=((1-z)\mathring{\mathcal{H}}^{-1}-z\mathring{\mathcal{E}}^{-1})^{-1}.
\end{eqnarray*} 
Consequently, one has $\mathring{\Phi}=\Phi$ and $\mathring{\Psi}=\Psi$ both hold with overwhelming probability. Correspondingly, we set $\mathring{\tilde{\Phi}}:=(1-z)\mathring{\Phi}$.

Now, setting
\begin{eqnarray*}
\mathcal{D}_3\equiv\mathcal{D}_3(\eta,\varepsilon):=\{z\in \mathbb{C}: d_r+\frac{3}{2}\eta< \Re z< 1+\eta, |\Im z|<\varepsilon\}\subset\mathcal{D}_2,
\end{eqnarray*}
we have the following lemma, which provides us necessary bounds on $||\Phi||$,  $||\Psi||$, $||\Phi^\varepsilon||$, $||\Psi^\varepsilon||$, $||\mathring{\Phi}||$ and $||\mathring{\Psi}||$, coming in handy below, and controls the difference 
between $\Phi$ and $\Phi^\varepsilon$ as well as that between $\Psi$ and  $\Psi^\varepsilon$. The latter then guarantees the validity of our modification on $M_n(z)$.


\begin{lem} \label{lem.071001}For any given $\eta>0$ in the definition of $\mathcal{D}_2$ and $\mathcal{D}_3$, and any $\varepsilon>0$, there exist some positive constants $C_1$, $C_2$, $C_3$ and $C_4$(depending on $\eta$ only) such that the following four statements hold.
\begin{enumerate}
\item[(i):] 
\hspace{20ex}$
\displaystyle \sup_{z\in\mathcal{D}_2}\max\{||\Phi(z)||, ||\Phi^\varepsilon(z)||, ||\Psi(z)||, ||\Psi^\varepsilon(z)||\}\leq C_1
$\\
holds with overwhelming probability.\\
\item[(ii):] 
\hspace{20ex}$ \displaystyle \sup_{z\in\mathcal{D}_2}\max\{||\Phi(z)-\Phi^\varepsilon(z)||, ||\Psi(z)-\Psi^\varepsilon(z)||\}\leq C_2\varepsilon
$\\
holds with overwhelming probability.\\
\item[(iii):] \hspace{20ex}$\displaystyle \sup_{z\in\mathcal{D}_2\setminus\mathcal{D}_3}\max\{||\Phi^\varepsilon(z)||, ||\Psi^\varepsilon(z)||\}\leq C_3(\varepsilon^{-1}+\varepsilon^{-2})
$\\
holds deterministically.\\
\item[(iv):] \hspace{20ex}$\displaystyle \sup_{z\in\mathcal{D}_2\setminus\mathcal{D}_3}\max\{||\mathring{\Phi}(z)||, ||\mathring{\Psi}(z)||\}\leq C_4 \varepsilon^{-1}
$\\
holds deterministically.
\end{enumerate}
\end{lem}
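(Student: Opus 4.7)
The plan is to handle the four items in order: (ii) will follow from (i) by the resolvent identity, and (iii)--(iv) are deterministic bounds of a rather different flavor. For (i), I would rewrite
\[
\Phi(z)=(E-zS_{\mathbf{w}\mathbf{w}})^{-1}=S_{\mathbf{w}\mathbf{w}}^{-1/2}\bigl(S_{\mathbf{w}\mathbf{w}}^{-1/2}E\,S_{\mathbf{w}\mathbf{w}}^{-1/2}-zI_p\bigr)^{-1}S_{\mathbf{w}\mathbf{w}}^{-1/2}.
\]
The spectrum of $S_{\mathbf{w}\mathbf{w}}^{-1/2}ES_{\mathbf{w}\mathbf{w}}^{-1/2}$ coincides with that of $S_{\mathbf{w}\mathbf{w}}^{-1}S_{\mathbf{w}\mathbf{y}}S_{\mathbf{y}\mathbf{y}}^{-1}S_{\mathbf{y}\mathbf{w}}$, which on $\Xi_1$ is contained in $[d_\ell,d_r+\eta/2]$; every $z\in\mathcal{D}_2$ lies at Euclidean distance $\geq\eta$ from this interval, so the middle factor has norm $\leq\eta^{-1}$. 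Combined with the Wishart large deviation bound applied to $S_{\mathbf{w}\mathbf{w}}$ (valid since $p/n\to c_1<1$), this controls $\|\Phi(z)\|$ uniformly on $\mathcal{D}_2$ with overwhelming probability. The same rewriting works for $\Phi^\varepsilon=(E-z(S_{\mathbf{w}\mathbf{w}}+\varepsilon I_p))^{-1}$, since adding $\varepsilon I_p$ perturbs the relevant eigenvalues only by $O(\varepsilon)$, preserving the $\eta/2$ gap for $\varepsilon$ small. For $\Psi$ I would use the analogous factorisation $\Psi=\mathcal{H}^{1/2}\bigl((1-z)I_q-z\mathcal{H}^{1/2}\mathcal{E}^{-1}\mathcal{H}^{1/2}\bigr)^{-1}\mathcal{H}^{1/2}$ together with $\text{Spec}(\mathcal{E}^{-1}\mathcal{H})=\{(1-\lambda_i)/\lambda_i\}_{i=1}^q$, so that the middle factor has eigenvalues $\lambda_i/(\lambda_i-z)$ bounded by $C/\eta$ on $\Xi_1\cap\mathcal{D}_2$; the Wishart estimate for $\mathcal{H}$ takes care of the outer $\|\mathcal{H}^{1/2}\|$ factors, and $\Psi^\varepsilon$ is handled identically. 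Claim (ii) is then immediate from the resolvent identities $\Phi-\Phi^\varepsilon=-z\varepsilon\,\Phi\Phi^\varepsilon$ and $\Psi-\Psi^\varepsilon=-z\varepsilon\,\Psi\Psi^\varepsilon$, since $|z|\leq 3$ on $\mathcal{D}_2$ and all four norms are controlled by (i).

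For (iii) the spectral gap argument of (i) breaks down: on $\mathcal{D}_2\setminus\mathcal{D}_3$ either $z$ approaches the real axis (so any probabilistic control of $\text{Spec}(S_{\mathbf{w}\mathbf{w}}^{-1}E)$ cannot yield a deterministic bound) or $\Re z\geq 1+\eta$ (which may genuinely lie inside that spectrum). The shift $-z\varepsilon I$ in the definitions of $\Phi^\varepsilon,\Psi^\varepsilon$ is exactly what supplies a definite-sign contribution in each of these two regimes. I would cover $\mathcal{D}_2\setminus\mathcal{D}_3$ by the overlapping subsets $\{|\Im z|\geq\varepsilon\}$ and $\{\Re z\geq 1+\eta\}$. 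On the first, for any $v$,
\[
\Im\bigl(v^{*}(\Phi^\varepsilon)^{-1}v\bigr)=-\Im z\cdot v^{*}(E+H+\varepsilon I_p)v,
\]
whose modulus is at least $|\Im z|\varepsilon\|v\|^{2}$ because $E+H+\varepsilon I_p\succeq\varepsilon I_p$; by Cauchy--Schwarz $\|\Phi^\varepsilon\|\leq(|\Im z|\varepsilon)^{-1}\leq\varepsilon^{-2}$. On the second, I take Hermitian real parts: $(1-\Re z)E\preceq 0$ and $-\Re z\,\varepsilon I_p\preceq -(1+\eta)\varepsilon I_p$ together give $-\Re\bigl((\Phi^\varepsilon)^{-1}\bigr)\succeq\varepsilon I_p$, hence $\|\Phi^\varepsilon\|\leq\varepsilon^{-1}$. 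The bounds on $\Psi^\varepsilon$ arise from the same two computations with $\mathcal{H}^{-1},\mathcal{E}^{-1}\succeq 0$ playing the role of $E,H$. For (iv) no $\varepsilon$-shift is needed: the truncations enforce $\mathring{E}+\mathring{H}\succeq\kappa_1 I_p$ and $\mathring{\mathcal{H}}^{-1},\mathring{\mathcal{E}}^{-1}\succeq\tfrac14 I_q$ deterministically, so the imaginary-part computation yields $\|\mathring{\Phi}\|\leq(\kappa_1|\Im z|)^{-1}\leq C\varepsilon^{-1}$ on $\{|\Im z|\geq\varepsilon\}$, while on $\{\Re z\geq 1+\eta\}$ the real part inherits a uniformly positive lower bound independent of $\varepsilon$, producing a constant bound absorbed into $C_4\varepsilon^{-1}$.

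The main obstacle is conceptual rather than computational: it is the design of the modifications $\Phi^\varepsilon,\Psi^\varepsilon,\mathring{\Phi},\mathring{\Psi}$ so that they are simultaneously (a) close enough to $\Phi,\Psi$ to make (ii) trivial and to not spoil the subsequent convergence $M_n^\varepsilon(z)\to M(z)$, and (b) possessed of a built-in positive-definite piece (the $\varepsilon I$ shift or the truncation thresholds $\kappa_i$) that survives even where the probabilistic event $\Xi_1$ is lost. Once the two-region split of $\mathcal{D}_2\setminus\mathcal{D}_3$ is in hand, each of the individual estimates reduces to a short quadratic-form computation; the only remaining bookkeeping is checking that the relevant inverses exist almost surely, which is ensured by the rank conditions coming from $p+q<n$ and $p>q$ in Assumption \ref{ass.070301}.
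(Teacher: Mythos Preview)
Your proposal is correct and follows essentially the same route as the paper: the spectral-gap factorisation for (i), the resolvent identities $\Phi^\varepsilon-\Phi=\varepsilon z\,\Phi\Phi^\varepsilon$ for (ii), and the two-region split of $\mathcal{D}_2\setminus\mathcal{D}_3$ (imaginary part $\geq\varepsilon$ versus $\Re z\geq 1+\eta$) with a positive-definite piece doing the work for (iii)--(iv). The only cosmetic differences are that the paper handles $\Phi^\varepsilon,\Psi^\varepsilon$ in (i) by bootstrapping from $\Phi,\Psi$ via the resolvent identity rather than re-running the factorisation, uses the factorisation $\Psi=\mathcal{H}S_{\mathbf{yy}}^{-1/2}(S_{\mathbf{yy}}^{-1/2}\mathcal{E}S_{\mathbf{yy}}^{-1/2}-zI_q)^{-1}S_{\mathbf{yy}}^{-1/2}\mathcal{E}$ instead of your $\mathcal{H}^{1/2}(\cdot)^{-1}\mathcal{H}^{1/2}$ form, and for (iii)--(iv) writes the bound through an explicit conjugation by $(\mathfrak{I}_1^\varepsilon)^{-1/2}$ rather than your direct numerical-range argument; all of these are equivalent rephrasings.
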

The proof of Lemma \ref{lem.071001} is stated in the Appendix. 
 Now, with Lemma \ref{lem.071001} granted, we have the following lemma.
\begin{lem} \label{lem.062901}For any given $\eta>0$ in the definition of $\mathcal{D}_2$, and any $\varepsilon>0$, there exists some sufficiently large positive constant $C$ (depending on $\eta$ only), such that 
\begin{eqnarray}
\sup_{z\in \mathcal{D}_2}\max_{i,j=1,\ldots,k^2+2k}|M_n(z)(i,j)-M_n^\varepsilon(z)(i,j)|\leq C\varepsilon 
\end{eqnarray} 
holds with overwhelming probability.
\end{lem}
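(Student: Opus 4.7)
The plan is to recognize Lemma 4.4 as an essentially immediate consequence of Lemma 4.3 (ii), combined with a uniform bound on the scalar multipliers that decorate the entries of $M_n(z)-I$. First, I would recall from the discussion preceding the lemma that every entry of $M_n(z)-I$ is a multiplier (either $\chi_{ij}$ or $t_i$, with $i,j\in\{1,\ldots,k\}$) times one of three quantities
$$
(\tilde{\Phi}S_{\mathbf{w}\mathbf{y}})(i,j),\qquad \tilde{\Phi}(i,j),\qquad \mathcal{E}(i,j)+\Psi(i,j),
$$
and that by construction $M_n^\varepsilon(z)$ differs from $M_n(z)$ only through the substitutions $\tilde{\Phi}(i,i)\to\tilde{\Phi}^\varepsilon(i,i)$ and $\Psi(i,i)\to\Psi^\varepsilon(i,i)$ on the diagonal, while the multipliers $t_i$ and $\chi_{ij}$, and the off-diagonal entries (as well as $\mathcal{E}(i,i)$ and $(\tilde{\Phi}S_{\mathbf{w}\mathbf{y}})(i,j)$), remain untouched. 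Therefore the entry-wise difference $M_n(z)(i,j)-M_n^\varepsilon(z)(i,j)$ is either identically zero or equals one of
$$
t_i\,[\tilde{\Phi}(i,i)-\tilde{\Phi}^\varepsilon(i,i)],\quad \chi_{ii}\,[\tilde{\Phi}(i,i)-\tilde{\Phi}^\varepsilon(i,i)],\quad t_it_j\,[\Psi(i,i)-\Psi^\varepsilon(i,i)],
$$
up to a multiplier whose precise form is read off from the factorization $\Delta=UV$.

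Second, I would bound the multipliers uniformly. The $t_i$'s are deterministic finite constants by Assumption 1.2, and by estimate (5.3) (the one labelled \texttt{lem.071601}) we have $|\chi_{ij}-\delta_{ij}t_it_j|\leq \varepsilon$ with overwhelming probability, so $|\chi_{ij}|\leq C$ for some constant $C$ depending only on $\max_i t_i$. Thus it suffices to show that
$$
\sup_{z\in\mathcal{D}_2}|\tilde{\Phi}(z)(i,i)-\tilde{\Phi}^\varepsilon(z)(i,i)|\leq C\varepsilon,\qquad \sup_{z\in\mathcal{D}_2}|\Psi(z)(i,i)-\Psi^\varepsilon(z)(i,i)|\leq C\varepsilon
$$
with overwhelming probability, for each fixed $i\in\{1,\ldots,k\}$.

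Third, I would invoke Lemma 4.3 (ii), which provides the operator-norm bounds
$$
\sup_{z\in\mathcal{D}_2}\|\Phi(z)-\Phi^\varepsilon(z)\|\leq C_2\varepsilon,\qquad \sup_{z\in\mathcal{D}_2}\|\Psi(z)-\Psi^\varepsilon(z)\|\leq C_2\varepsilon
$$
with overwhelming probability. Since $|A(i,i)|\leq\|A\|$ for any matrix $A$, these estimates immediately dominate the diagonal entry differences for $\Psi$, and for $\tilde{\Phi}=(1-z)\Phi$ they yield $|\tilde{\Phi}(i,i)-\tilde{\Phi}^\varepsilon(i,i)|\leq|1-z|\cdot C_2\varepsilon$, with $|1-z|$ bounded on the compact set $\mathcal{D}_2$ by a constant depending only on $\eta$. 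Finally, assembling these estimates with the uniform control on the multipliers and taking the maximum over the finitely many entry pairs $(i,j)\in\{1,\ldots,k^2+2k\}^2$ preserves overwhelming probability and yields the claimed bound for a suitable $C=C(\eta)$.

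There is no real obstacle here beyond bookkeeping: the substantive work is done by Lemma 4.3 (ii), and the remaining content is just combining an operator-norm estimate with the trivial inequality $|A(i,i)|\leq\|A\|$ and the boundedness of the scalar prefactors. The one point that might be worth verifying carefully is that the intersection of the relevant overwhelming-probability events (the event from Lemma 4.3 (ii) and the event on which $|\chi_{ij}|\leq C$ for all $i,j$) itself holds with overwhelming probability, but this is immediate from the union bound since only finitely many events are involved.
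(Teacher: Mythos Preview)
Your proposal is correct and follows essentially the same approach as the paper's own proof: bound the scalar multipliers $t_i$ and $\chi_{ij}$ (using the assumption $r_i<1$ so that $t_i$ is finite, and the estimate $|\chi_{ij}-\delta_{ij}t_it_j|\leq\varepsilon$), then apply part (ii) of the operator-norm lemma to control $\|\Phi-\Phi^\varepsilon\|$ and $\|\Psi-\Psi^\varepsilon\|$, and pass to entries via $|A(i,i)|\leq\|A\|$. The paper compresses this into a single displayed inequality, but the logical content is identical.
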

\begin{proof}
According to the above discussion and construction, it is easy to see that
\begin{eqnarray*}
|M_n(z)(i,j)-M_n^\varepsilon(z)(i,j)|\leq \max_{i,j}|\chi_{ij}|\cdot \max_{i}t_i \cdot \max\{||\Phi(z)-\Phi^\varepsilon(z)||, ||\Psi(z)-\Psi^\varepsilon(z)||\}
\end{eqnarray*}
Note that when $r_i<1$, $t_i$ is bounded. Therefore, using (\ref{lem.071601}) and  (ii) of Lemma \ref{lem.071001} yields the conclusion.
\end{proof}
Then, what remains is to show the following lemma.
\begin{lem} \label{lem.062904}For any given $\eta>0$ in the definition of $\mathcal{D}_2$, and any $\varepsilon>0$, there exists some sufficiently large positive constant $C$ (depending on $\eta$ only), such that
\begin{eqnarray}
\sup_{z\in \mathcal{D}_2}\sup_{i,j=1,\ldots,k^2+2k}|M_n^\varepsilon(z)(i,j)-M(z)(i,j)|\leq C\varepsilon \label{070202}
\end{eqnarray}
holds with overwhelming probability.
\end{lem}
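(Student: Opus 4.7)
My plan is to prove the lemma in three stages, reducing uniform closeness on $\mathcal{D}_2$ to a computation at a single real point and there analyzing each entry via orthogonal invariance and free probability. For Stage 1, by (i) of Lemma \ref{lem.071001} together with $|\chi_{ij}|\le C$, all entries of $M_n^\varepsilon(z)$ are uniformly bounded on $\mathcal{D}_2$ with overwhelming probability, while $M(z)$ is holomorphic with bounded entries there. Both matrices being holomorphic on $\mathcal{D}_2$, the Vitali--Porter theorem (or a Cauchy integral argument over a small circle contained in $\mathcal{D}_2$) reduces the uniform estimate to the pointwise bound $|M_n^\varepsilon(z)(i,j) - M(z)(i,j)| \le C\varepsilon$ with overwhelming probability at every $z$ in a countable dense subset of a fixed real interval such as $[1+\eta,2]\subset \mathcal{D}_2$.

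Stage 2 is the passage from the random $M_n^\varepsilon(z)$ to its mean. For fixed real $z$ the nontrivial quantities entering an entry of $M_n^\varepsilon(z)$ are $\tilde\Phi^\varepsilon(i,j)$, $(\tilde\Phi^\varepsilon S_{\mathbf{w}\mathbf{y}})(i,j)$, and $\mathcal{E}(i,j)+\Psi^\varepsilon(i,j)$, each multiplied by a bounded factor $t_i$ or $\chi_{ij}$. Each is a smooth function of the independent Gaussian matrices $W$ and $Y$ whose Lipschitz constant is controlled, thanks to the $\varepsilon$-regularization, by (iii) of Lemma \ref{lem.071001}. A Gaussian concentration inequality (Gaussian Poincar\'e or Hanson--Wright) then gives $|M_n^\varepsilon(z)(i,j)-\mathbb{E}M_n^\varepsilon(z)(i,j)|=o(1)$ with overwhelming probability, reducing the problem to showing that each $\mathbb{E}M_n^\varepsilon(z)(i,j)$ approximates $M(z)(i,j)$ to within $C\varepsilon$.

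Stage 3 identifies these means. Conditional on $Y$, the law of $W$ is invariant under $W\mapsto OW$ for any orthogonal $O\in \mathrm{O}(p)$, so $\mathbb{E}[\Phi^\varepsilon\mid Y]$ is a scalar multiple of $I_p$ and $\mathbb{E}[\Phi^\varepsilon S_{\mathbf{w}\mathbf{y}}\mid Y]=0$; symmetrically, conditioning on $W$ gives $\mathbb{E}[\Psi^\varepsilon\mid W]\propto I_q$ and $\mathbb{E}[\mathcal{E}\mid W]=(p/n)I_q$. All off-diagonal entries of $\mathbb{E}M_n^\varepsilon(z) - I$ therefore vanish, producing the $\mathbf{0}_{k^2-k}$ block of $M(z)$ and the zero entries inside each $G_i(z)$. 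For the remaining scalars one observes that $(1/p)\mathbb{E}\operatorname{tr}\Phi^\varepsilon$ and $(1/q)\mathbb{E}\operatorname{tr}\Psi^\varepsilon$ are Stieltjes transforms at $z\varepsilon$ of the LSDs of $(1-z)E-zH$ and $(1-z)\mathcal{H}^{-1}-z\mathcal{E}^{-1}$ respectively; by Cochran, $(E,H)$ and $(\mathcal{E},\mathcal{H})$ are independent Wisharts, hence asymptotically free. Voiculescu's R-transform additivity (\ref{061801}), applied to the Marchenko--Pastur laws of $E, H$ and of $\mathcal{E}^{-1}, \mathcal{H}^{-1}$, yields implicit equations whose solutions match $f(z)$ and $h(z)-c_1$, up to $O(\varepsilon)$ coming from the Lipschitz dependence on the $z\varepsilon$-shift.

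The principal obstacle is the explicit R-transform calculation in Stage 3: one must compute $R_{(1-z)E-zH}$ and $R_{(1-z)\mathcal{H}^{-1}-z\mathcal{E}^{-1}}$ from their free summands, solve the resulting fixed-point equation for the Stieltjes transform, and identify the correct analytic branch so that the square root matches the convention $\ell(z)/z\to 1$ used to define $f(z)$ and $h(z)$. A secondary bookkeeping issue is that Stages 1 and 2 must be executed with rate $o(\varepsilon)$, not merely $o(1)$; but since Gaussian concentration here is exponentially small in a power of $n$, it dominates the polynomial $\varepsilon^{-2}$ blow-ups from Lemma \ref{lem.071001}(iii), so this part is routine.
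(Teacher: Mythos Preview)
Your broad architecture---orthogonal invariance to kill the off-diagonal entries, free additive convolution to compute the diagonal limits, and analytic continuation to pass from a real interval to the full complex domain---is exactly the paper's strategy. The R-transform computation you flag as the principal obstacle is indeed the core of Lemma~\ref{lem.071008}, and your description of it is accurate.

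However, your Stage~1 contains a genuine gap. The Vitali--Porter theorem applies to a \emph{sequence} of holomorphic functions converging pointwise on a set with a limit point; it does not say that a single bounded holomorphic function small on a real segment is small on a surrounding planar domain with the same rate. Concretely, knowing $|M_n^\varepsilon(z)(i,j)-M(z)(i,j)|\le C\varepsilon$ on $[1+\eta,2]$ and $|M_n^\varepsilon(z)(i,j)|\le K$ on $\mathcal{D}_2$ yields at best an $O(\varepsilon^{\omega})$ bound (harmonic-measure interpolation) off the segment, not $O(\varepsilon)$. The paper avoids this by reversing your Stages~1 and~2: it first establishes concentration of $\Phi^\varepsilon(i,i)$ and $\Psi^\varepsilon(i,i)$ around their expectations uniformly on $\mathcal{D}_2\setminus\mathcal{D}_3$ (Lemma~\ref{lem.071007}), and only then applies Montel and the Identity Theorem to the \emph{deterministic} sequence $n\mapsto \mathbb{E}\mathring{\tilde\Phi}_n(i,i)$, which genuinely converges pointwise on $[1+\eta,2]$ to the known limit $f(z)$ and therefore uniformly on compacta. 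The extension from $\mathcal{D}_2\setminus\mathcal{D}_3$ back to the thin strip $\mathcal{D}_3$ is then a separate direct continuity estimate (cf.\ (\ref{071143})), since $\mathcal{D}_3$ is $O(\varepsilon)$-close to its complement; you do not mention this step.

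Two smaller points. First, your Stage~2 appeals to a deterministic Lipschitz constant via (iii) of Lemma~\ref{lem.071001}, but that bound is only available on $\mathcal{D}_2\setminus\mathcal{D}_3$, and moreover the Lipschitz constant of $\Phi^\varepsilon(i,j)$ in the Gaussian entries also involves the random $\|W\|,\|Y\|$; the paper handles this with a martingale-difference argument (Lemma~\ref{lem.071101}) and the truncated $\mathring{\Phi},\mathring{\Psi}$ rather than Gaussian Lipschitz concentration. Second, in Stage~3 you invoke $\mathbb{E}[\Phi^\varepsilon S_{\mathbf{wy}}\mid Y]=0$ for the off-diagonals, but the matrix $M_n^\varepsilon$ retains the \emph{unregularized} $\tilde\Phi S_{\mathbf{wy}}$ and the off-diagonal $\tilde\Phi(i,j)$, whose expectations need not exist; the paper instead bounds these random quantities directly on all of $\mathcal{D}_2$ using large-deviation estimates for Haar columns (Lemma~\ref{lem.071604}) and a lattice argument, bypassing expectations altogether for the negligible entries.
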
 
For Lemma \ref{lem.062904}, we can further decompose it into two lemmas, on the negligible entries and the non-negligible entries of $M_n^\varepsilon(z)$ respectively. The first one is as follows.
\begin{lem} \label{lem.071003}
For any given $\eta>0$ and $\varepsilon>0$, there exists some sufficiently large positive constant $C$ (depending on $\eta$ only), such that for any $i,j\in \{1,\ldots, k\}$, we have
\begin{eqnarray}
\sup_{z\in \mathcal{D}_2}|(\tilde{\Phi} S_{\mathbf{w}\mathbf{y}})(i,j)|\leq C\varepsilon \label{072506}
\end{eqnarray}
with overwhelming probability.
If $i\neq j$, there exists some positive constant $C$ (depending on $\eta$ only), such that
\begin{eqnarray}
\sup_{z\in \mathcal{D}_2}\max \{ |\Phi (i,j)|, |\mathcal{E}(i,j)|, |\Psi(i,j)|\}\leq C\varepsilon \label{072507}
\end{eqnarray}
holds with overwhelming probability.
\end{lem}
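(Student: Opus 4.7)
The plan is, for each family of entries, (i) to identify a conditional orthogonal invariance that forces the conditional mean to vanish, (ii) to obtain a Gaussian concentration estimate of size $n^{-1/2+o(1)}$ (via log-Sobolev for $\Phi,\Psi$ and Hanson--Wright for $\mathcal{E}$), and (iii) to upgrade the pointwise-in-$z$ bound to a uniform bound on $\mathcal{D}_2$ via holomorphy and a net argument.

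For the invariance, condition on $Y$: the Gaussian matrix $W$ is distributionally invariant under $W\mapsto OW$ for any $O\in O(p)$, so $E\to OEO'$, $H\to OHO'$, $\Phi\to O\Phi O'$, and $S_{\mathbf{wy}}\to OS_{\mathbf{wy}}$. Consequently the conditional law of $\Phi$ given $Y$ is $O(p)$-conjugation invariant, while the $p\times q$ product $\tilde{\Phi}S_{\mathbf{wy}}$ is invariant in law under left multiplication by $O(p)$. Symmetrically, conditional on $W$, the $O(q)$-invariance of $Y$ makes $\mathcal{E},\mathcal{H}$, and hence $\Psi=((1-z)\mathcal{H}^{-1}-z\mathcal{E}^{-1})^{-1}$, $O(q)$-conjugation invariant. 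For any random $m\times m$ matrix $B$ with $B\stackrel{d}{=}OBO'$ for all $O\in O(m)$ one has $\mathbb{E}B=m^{-1}(\mathrm{tr}\,\mathbb{E}B)\,I_m$ (Schur's lemma), so the off-diagonal entries of $\Phi,\mathcal{E},\Psi$ have zero conditional mean; applying the left-invariance of $\tilde{\Phi}S_{\mathbf{wy}}$ with $O=-I_p$ forces $\mathbb{E}[(\tilde{\Phi}S_{\mathbf{wy}})(i,j)\mid Y]=0$ for \emph{every} $(i,j)$.

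For the concentration step I restrict to the overwhelming event of Lemma~\ref{lem.071001} on which $\Phi=\mathring{\Phi}$ and $\Psi=\mathring{\Psi}$, so that the operator-norm bounds become deterministic. Direct differentiation $\partial_{W_{a,b}}\mathring{\Phi}=-\mathring{\Phi}(\partial_{W_{a,b}}\mathring{\Phi}^{-1})\mathring{\Phi}$, combined with the rank-two structure of $\partial_{W_{a,b}}\mathring{\Phi}^{-1}$, yields the Frobenius gradient bound $\|\nabla_W\mathring{\Phi}(z)(i,j)\|_F=O(\|\mathring{\Phi}\|^2\|W\|/n)=O(n^{-1/2})$, and analogously for $Y\mapsto\mathring{\Psi}(z)(i,j)$. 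The Gaussian log-Sobolev inequality then delivers $|\mathring{\Phi}(z)(i,j)|,|\mathring{\Psi}(z)(i,j)|\le n^{-1/2+o(1)}$ with overwhelming probability for $i\ne j$. For $\mathcal{E}(i,j)=n^{-1}(YP_{\mathbf{w}}Y')(i,j)$, an off-diagonal quadratic form in the (conditionally independent) $i$-th and $j$-th rows of $Y$, the same rate follows from Hanson--Wright conditional on $W$. For $(\tilde{\Phi}S_{\mathbf{wy}})(i,j)$, I enlarge the probability space by an independent Haar $O\in O(p)$; the left-invariance then identifies the entry in law with $\mathbf{v}'\tilde{\Phi}S_{\mathbf{wy}}\mathbf{e}_j$, where $\mathbf{v}=O'\mathbf{e}_i$ is uniform on the $p$-sphere, and conditional spherical concentration on $(\tilde{\Phi},S_{\mathbf{wy}})$ bounds the entry by $O(\|\tilde{\Phi}S_{\mathbf{wy}}\mathbf{e}_j\|/\sqrt{p})=O(p^{-1/2+o(1)})$, using $\|\tilde{\Phi}\|=O(1)$ on $\mathcal{D}_2$ and $\|S_{\mathbf{wy}}\mathbf{e}_j\|^2=n^{-2}\|WY'\mathbf{e}_j\|^2=O(1)$ with overwhelming probability.

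To promote the pointwise-in-$z$ bound to uniformity on $\mathcal{D}_2$, I use holomorphy: on $\Xi_1$ every matrix involved is holomorphic in $z$ on a neighborhood of $\mathcal{D}_2$ with operator norm $O(1)$ by Lemma~\ref{lem.071001}(i), so by Cauchy's integral formula each $z$-derivative is also $O(1)$. A polynomial-size net on $\mathcal{D}_2$, the pointwise bound at each net point (with sufficiently strong overwhelming probability), and the Lipschitz-in-$z$ estimate yield the required uniform bound after a union bound. The main obstacle I foresee is securing a \emph{deterministic} gradient bound for $\Psi(z)(i,j)$ as a function of $Y$: because $\Psi$ involves the inverses $\mathcal{E}^{-1},\mathcal{H}^{-1}$, any such gradient bound depends on the smallest eigenvalues of $\mathcal{E},\mathcal{H}$ being bounded below, which only holds with overwhelming probability. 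This is exactly the role of the truncations $\mathring{\mathcal{E}},\mathring{\mathcal{H}}$ with floors $\kappa_2,\kappa_3$ introduced before Lemma~\ref{lem.071001}: on the overwhelming event $\{\Psi=\mathring{\Psi}\}$ one has a deterministic, $z$-uniform gradient bound, so that log-Sobolev applies routinely. With that truncation in hand the rest of the argument is routine bookkeeping.
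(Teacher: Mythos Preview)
Your overall strategy---orthogonal invariance to kill the mean, pointwise concentration, then a net/Lipschitz argument for uniformity in $z$---is correct and matches the paper. Your treatment of $(\tilde{\Phi}S_{\mathbf{wy}})(i,j)$ via an auxiliary Haar matrix is essentially the paper's argument in disguise: the paper extracts the left singular matrix $U_{\mathbf{w}}$ of $W$, shows it is Haar and independent of $(\Lambda_{\mathbf{w}},V_{\mathbf{w}},Y)$, and then writes $(\Phi S_{\mathbf{wy}})(i,j)=\mathbf{u}_{\mathbf{w},i}'\boldsymbol{\pi}_j(z)$ as a linear form in a Haar column, exactly your spherical-concentration step.

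Where your proposal diverges, and where there is a genuine gap, is the log-Sobolev route for $\Phi(i,j)$ and $\Psi(i,j)$. The truncations $\mathring{E}=\chi_0^4(E)$, $\mathring{H}=\chi_{\kappa_1}^4(H)$ bound the \emph{eigenvalues of the output}, and hence (via Lemma~\ref{lem.071001}(iv)) bound $\|\mathring{\Phi}\|$ deterministically on $\mathcal{D}_2\setminus\mathcal{D}_3$. But they do \emph{not} bound the Lipschitz constant of the map $W\mapsto\mathring{E}$: since $E=n^{-1}WP_{\mathbf{y}}W'$ is quadratic in $W$, the local Lipschitz constant is $O(\|W\|/n)$, which is unbounded on Gaussian space. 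Your claimed gradient bound $\|\nabla_W\mathring{\Phi}(z)(i,j)\|_F=O(\|\mathring{\Phi}\|^2\|W\|/n)=O(n^{-1/2})$ inserts $\|W\|=O(\sqrt{n})$, which is only a high-probability statement; Gaussian log-Sobolev needs a global Lipschitz bound. (There is also a secondary issue: $\partial_{W_{a,b}}\mathring{E}$ is not rank-two after the spectral truncation, and is not even defined where an eigenvalue of $E$ sits at a threshold.) This can be repaired with further smooth truncations on $\|W\|$ and $\|Y\|$, but that bookkeeping is nontrivial and you have not carried it out.

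The paper sidesteps all of this by applying the \emph{same} Haar-column device to the off-diagonal entries: writing
\[
\Phi(z)(i,j)=\mathbf{u}_{\mathbf{w},i}'\bigl((1-z)\Lambda_{\mathbf{w}}P_{\hat{\mathbf{y}}}\Lambda_{\mathbf{w}}'-zD_{\mathbf{w}}\bigr)^{-1}\mathbf{u}_{\mathbf{w},j}
\]
with $U_{\mathbf{w}}$ independent of the middle matrix, and invoking the bilinear Haar-column estimate (Lemma~\ref{lem.071604}(ii)); $\Psi(i,j)$ is handled the same way using the left singular matrix of $Y$. This requires only the \emph{high-probability} operator-norm bound from Lemma~\ref{lem.071001}(i), never a deterministic Lipschitz constant, and that is exactly what buys the simplification. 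In short: once you have your Haar/spherical argument for $\tilde{\Phi}S_{\mathbf{wy}}$, reuse it (now as a bilinear form in two Haar columns) for $\Phi(i,j)$ and $\Psi(i,j)$, and drop the log-Sobolev machinery entirely.
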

Now, what remains is to estimate $\tilde{\Phi}^\varepsilon(i,i)$, $\Psi^\varepsilon(i,i)$ and $\mathcal{E}(i,i)$. Akin to (\ref{lem.071601}), it is elementary to see that
\begin{eqnarray}
|\mathcal{E}(i,j)-c_1\delta_{ij}|\leq \varepsilon \label{071901}
\end{eqnarray}
with overwhelming probability.
Setting 
\begin{eqnarray*}
\varrho(z)=h(z)-c_1,
\end{eqnarray*}
we have the following lemma  on the non-negligible entries.
\begin{lem}  \label{lem.071004}
For any given $\eta>0$ and $\varepsilon>0$, there exists some sufficiently large positive constant $C$ (depending on $\eta$ only), such that for each $i\in \{1,\ldots, k\}$, we have
\begin{eqnarray*}
\sup_{z\in\mathcal{D}_2}\max\{|\tilde{\Phi}^\varepsilon(i,i)-f(z)|, |\Psi^\varepsilon(i,i)-\varrho(z)|\}\leq C\varepsilon
\end{eqnarray*}
holds with overwhelming probability.
\end{lem}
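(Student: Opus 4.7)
The plan is to reduce each diagonal entry to a normalized trace via orthogonal invariance plus concentration, and then identify the limit of that trace with $f(z)$ or $\varrho(z)$ through the R-transform machinery of Section~2. The pair $(E,H)=(n^{-1}WP_{\mathbf{y}}W',n^{-1}W(I-P_{\mathbf{y}})W')$ is jointly orthogonally invariant in $p\times p$, since $W$ is standard Gaussian and independent of $Y$ (hence of $P_{\mathbf{y}}$). Consequently $\Phi^{\varepsilon}$ is orthogonally invariant in law and $\mathbb{E}[\Phi^{\varepsilon}(i,i)]=p^{-1}\mathbb{E}\,\mathrm{tr}\,\Phi^{\varepsilon}$ independently of $i$; the analogous identity for $\Psi^{\varepsilon}$ (with $p$ replaced by $q$) follows after conditioning on $W$ and using rotational invariance of $Y$. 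To remove the expectation one invokes Gaussian concentration: (i) of Lemma~\ref{lem.071001} makes $\|\Phi^{\varepsilon}\|$ and $\|\Psi^{\varepsilon}\|$ uniformly bounded on $\mathcal{D}_{2}$ with overwhelming probability, so each diagonal entry is uniformly Lipschitz in the Gaussian data. A polynomial net in $z$, together with the holomorphy of the entries on $\mathbb{C}\setminus\mathcal{D}_{1}$ (under the event $\Xi_{1}$) and Cauchy's integral formula, upgrades pointwise deviation bounds to a uniform bound on $\mathcal{D}_{2}$ with error $\ll\varepsilon$.

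The core of the argument is the computation of the limit of $p^{-1}\mathbb{E}\,\mathrm{tr}\,\Phi^{\varepsilon}$. Writing $A(z):=(1-z)E-zH$, this normalized trace equals $-s^{(n)}_{A(z)}(z\varepsilon)$, where $s^{(n)}_{A(z)}$ is the empirical Stieltjes transform of $A(z)$. Since $E$ and $H$ are independent orthogonally invariant Wishart-type matrices, they are asymptotically free in $(\mathrm{Mat}_{p}(\mathcal{P}),p^{-1}\mathbb{E}\,\mathrm{tr})$, and identity~(\ref{061801}) applies. Using $R_{cX}(\omega)=cR_{X}(c\omega)$ together with the R-transforms of the rescaled Marchenko--Pastur laws $c_{2}\cdot\mathrm{MP}(c_{1}/c_{2})$ and $(1-c_{2})\cdot\mathrm{MP}(c_{1}/(1-c_{2}))$ of $E$ and $H$, one obtains
\begin{equation*}
R_{A(z)}(\omega)=\frac{(1-z)c_{2}}{1-c_{1}(1-z)\omega}+\frac{-z(1-c_{2})}{1+c_{1}z\omega}.
\end{equation*}
Setting $K_{A(z)}(\omega_{0})=R_{A(z)}(\omega_{0})+\omega_{0}^{-1}=0$ and clearing denominators produces the quadratic
\begin{equation*}
c_{1}(1-c_{1})z(1-z)\omega_{0}^{2}+\bigl[(2c_{1}-1)z+c_{2}-c_{1}\bigr]\omega_{0}+1=0,
\end{equation*}
whose discriminant is exactly $\ell(z)^{2}$. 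Selecting the branch consistent with $\ell(z)/z\to 1$ at infinity yields $\omega_{0}=-f(z)/(1-z)$, so that $(1-z)(-\omega_{0})=f(z)$; the residual $O(\varepsilon)$ error is inherited from the shift $0\mapsto z\varepsilon$ and is controlled uniformly on the bounded set $\mathcal{D}_{2}$. The computation for $\Psi^{\varepsilon}$ runs along the same lines, either by computing the R-transforms of the independent inverse-Wishart matrices $\mathcal{E}^{-1}$ and $\mathcal{H}^{-1}$ directly, or by invoking identity~(\ref{061912}) to re-express $\Psi$ in terms of $\Upsilon$ and $S_{\mathbf{y}\mathbf{y}}$; since $\Upsilon$ has exactly the form of $\Phi$ with $(E,H)$ replaced by $(\mathcal{E},\mathcal{H})$ and the roles of $c_{1},c_{2}$ swapped, and since $q^{-1}\mathrm{tr}\,\mathcal{E}\to c_{1}$ and $q^{-1}\mathrm{tr}\,\mathcal{H}\to 1-c_{1}$ by standard Wishart asymptotics, the resulting algebraic simplification produces $h(z)-c_{1}=\varrho(z)$.

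The main obstacle is the coupled algebraic verification that the R-transform identity produces exactly $f(z)$ and $\varrho(z)$: the discriminant must be recognized as $\ell(z)^{2}$, and the correct branch of the square root must be singled out by tracking $s_{A(z)}(0)$ for large real $z$ (and similarly for the analogue governing $\Psi^{\varepsilon}$). A secondary difficulty is controlling all approximations uniformly on $\mathcal{D}_{2}$; this is handled by the separation $\Re z>d_{r}+\tfrac{3}{2}\eta$ built into $\mathcal{D}_{2}$, which keeps the resolvents and Stieltjes transforms at distance $\gtrsim\eta$ from the limiting spectral edge and thus bounded by constants depending only on $\eta$, as asserted.
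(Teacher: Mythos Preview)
Your core computation---reducing the diagonal entry to a normalized trace via orthogonal invariance and then identifying the limit through the additive R-transform identity for the free sum $(1-z)E+(-z)H$---is exactly what the paper does, and your quadratic for $\omega_0$ matches the paper's equation for $m_1=s_1(0)$ after the substitution $\omega_0=-m_1$. The differences are in the technical scaffolding. First, you invoke Gaussian concentration on the grounds that (i) of Lemma~\ref{lem.071001} bounds $\|\Phi^{\varepsilon}\|$; but that bound holds only with overwhelming probability, so the Lipschitz constant is not deterministic, and a direct appeal to Gaussian concentration is incomplete. The paper handles this by introducing the spectrally truncated proxies $\mathring{\Phi},\mathring{\Psi}$ (which carry \emph{deterministic} norm bounds, Lemma~\ref{lem.071001}(iv)) and by running a martingale-difference argument on $\mathcal{D}_2\setminus\mathcal{D}_3$ (Lemma~\ref{lem.071101}). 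Second, you apply the free-convolution machinery directly for $z\in\mathcal{D}_2$, but for non-real $z$ the matrices $(1-z)E$ and $-zH$ are not self-adjoint, so Voiculescu's theorem and the Stieltjes/R-transform correspondence are not immediately available. The paper sidesteps this by computing the limit only on the real interval $[1+\eta,2]$ (where $\mathring{J}$ is real symmetric negative-definite), and then propagating to all of $\mathcal{D}_2\setminus\mathcal{D}_3$ via Montel's theorem and the Identity Theorem (this is the content of the proof of Lemma~\ref{lem.071004} from Lemmas~\ref{lem.071007} and~\ref{lem.071008}), with a final continuity step to reach $\mathcal{D}_3$. Your net-plus-Cauchy route for uniformity is a reasonable alternative to the paper's lattice argument, but it does not by itself resolve the two issues above.
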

\subsection{Reduction for Lemma \ref{lem.071004}}
To facilitate the proof, we further reduce Lemma \ref{lem.071004} to two more easily handled tasks, namely Lemma \ref{lem.071007} and \ref{lem.071008}.  Roughly, the former is on the concentration estimate of  $\Phi^\varepsilon(i,i)$ and $\Psi^\varepsilon(i,i)$ around their expectations, and the latter provides the limits of the expectations. However, in both two steps, we reduce the region $\mathcal{D}_2$ to more restricted ones, which are more friendly to our reasoning, meanwhile, it will be clear that such slimmed-down discussions are sufficient to imply Lemma \ref{lem.071004}.
\begin{lem}  \label{lem.071007}For any given $\eta>0$ and $\varepsilon>0$, there exists some sufficiently large positive constant $C$ (depending on $\eta$ only), such that for all $i\in \{1,\ldots, k\}$, we have 
\begin{eqnarray*}
\sup_{z\in\mathcal{D}_2\setminus\mathcal{D}_3}\max\{|\Phi^\varepsilon(i,i)-\mathbb{E}\Phi^\varepsilon(i,i)|,  |\Psi^\varepsilon(i,i)-\mathbb{E}\Psi^\varepsilon(i,i)|\}\leq C\varepsilon
\end{eqnarray*}
with overwhelming probability. 
\end{lem}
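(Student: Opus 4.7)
The plan is to establish concentration via the Gaussian concentration inequality (Borell--Tsirelson / log-Sobolev) applied to $\Phi^\varepsilon(i,i)$ and $\Psi^\varepsilon(i,i)$, viewed as smooth functions of the jointly Gaussian entries of $W$ and $Y$. The whole argument is powered by (iii) of Lemma~\ref{lem.071001}, which gives the deterministic operator norm bound $\|\Phi^\varepsilon(z)\|,\|\Psi^\varepsilon(z)\|\leq C(\varepsilon^{-1}+\varepsilon^{-2})$ throughout $\mathcal{D}_2\setminus\mathcal{D}_3$. It suffices to (a) establish pointwise concentration at each fixed $z\in\mathcal{D}_2\setminus\mathcal{D}_3$, and (b) use an $\varepsilon$-net combined with Lipschitz continuity in $z$ to upgrade to uniform concentration.

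For (a), compute partial derivatives via the resolvent identity $\partial(A^{-1})=-A^{-1}(\partial A)A^{-1}$. For $\Phi^\varepsilon$, writing $A:=(1-z)E-zH-z\varepsilon I_p=n^{-1}WBW'-z\varepsilon I_p$ with $B:=(1-z)P_{\mathbf{y}}-z(I-P_{\mathbf{y}})$ (symmetric, $\|B\|\leq 2$, depending on $Y$ only), a short calculation yields
\begin{eqnarray*}
\partial_{W_{ab}}\Phi^\varepsilon(i,i)=-2n^{-1}(\Phi^\varepsilon\mathbf{e}_i)_a(BW'\Phi^\varepsilon\mathbf{e}_i)_b,
\end{eqnarray*}
hence $\sum_{a,b}|\partial_{W_{ab}}\Phi^\varepsilon(i,i)|^2\leq 4n^{-2}\|\Phi^\varepsilon\|^4\|W\|^2\|B\|^2$. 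The $Y$-gradient is handled analogously through $\partial_{Y_{cd}}P_{\mathbf{y}}$, and a parallel analysis applies to $\Psi^\varepsilon$ (with $\mathcal{E}^{-1},\mathcal{H}^{-1}$ now entering, whose operator norms are controlled by~(\ref{0726100}) with overwhelming probability). Combining with $\|W\|,\|Y\|\leq C\sqrt{n}$ (overwhelming probability), the total $L^2$-gradient norm of each entry is $O(n^{-1/2}\varepsilon^{-K})$ for some fixed integer $K$, and Gaussian concentration then gives $|\Phi^\varepsilon(z)(i,i)-\mathbb{E}\Phi^\varepsilon(z)(i,i)|\leq\varepsilon$ with probability at least $1-\exp(-cn\varepsilon^{2K+2})$, which is overwhelming for each fixed $\varepsilon$.

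For (b), $\partial_z\Phi^\varepsilon=\Phi^\varepsilon(E+H+\varepsilon I_p)\Phi^\varepsilon$ combined with (iii) of Lemma~\ref{lem.071001} and the overwhelming bound $\|S_{\mathbf{w}\mathbf{w}}\|=O(1)$ yields a Lipschitz constant of $O(\varepsilon^{-K'})$ for the map $z\mapsto\Phi^\varepsilon(z)(i,i)$ (and similarly for its expectation) on $\mathcal{D}_2\setminus\mathcal{D}_3$. A net of polynomial cardinality $O(\varepsilon^{-C})$ together with a union bound then promotes (a) to the uniform statement, and the argument for $\Psi^\varepsilon$ is identical in structure.

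The main obstacle is that Borell--Tsirelson requires a global Lipschitz bound on $\mathbb{R}^{(p+q)n}$, whereas ours depends on $\|W\|$ and $\|Y\|$ and so holds only on the overwhelming-probability event of~(\ref{0726100}). A clean remedy is to replace $\Phi^\varepsilon,\Psi^\varepsilon$ by their mollified versions $\mathring{\Phi}^\varepsilon,\mathring{\Psi}^\varepsilon$ (defined using the truncations $\mathring{E},\mathring{H},\mathring{\mathcal{E}},\mathring{\mathcal{H}}$ already constructed in Section 5.1), for which (iv) of Lemma~\ref{lem.071001} gives fully deterministic operator norm bounds; since these mollified versions agree with the originals with overwhelming probability, concentration transfers back after paying a negligible error in expectation. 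Tracking the $\varepsilon$-power bookkeeping through all of the above is then routine.
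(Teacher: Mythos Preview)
Your route via Gaussian concentration is genuinely different from the paper's, which instead proceeds in two stages: first it reduces $\Phi^\varepsilon(i,i)$ to $p^{-1}\mathrm{tr}\,\Phi^\varepsilon$ using that the rows of $U_{\mathbf w}$ are Haar-distributed and independent of the rest (Lemma~\ref{lem.071604}(i)), and then it proves concentration of the normalized trace via a martingale-difference decomposition and Burkholder's inequality (Lemma~\ref{lem.071101}), the key point being that each one-step difference $|\mathrm{tr}\,\Phi^\varepsilon-\mathrm{tr}\,\Phi^\varepsilon_{(i)}|$ is bounded \emph{deterministically} on $\mathcal D_4$ by a resolvent identity (see~(\ref{070804}) and the lines after it).

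Your proposal, however, has a real gap in the global-Lipschitz fix. You claim that passing to $\mathring{\Phi}^\varepsilon$ (built from $\mathring E,\mathring H$) renders the entry globally Lipschitz because (iv) of Lemma~\ref{lem.071001} bounds $\|\mathring\Phi^\varepsilon\|$ deterministically. But a resolvent-norm bound is only half of the gradient estimate: your own computation gives $\sum_{a,b}|\partial_{W_{ab}}\Phi^\varepsilon(i,i)|^2\le 4n^{-2}\|\Phi^\varepsilon\|^4\|W\|^2\|B\|^2$, and the factor $\|W\|$ survives unchanged when $E,H$ are replaced by $\mathring E,\mathring H$. The spectral truncation $\chi_0^4$ tames the eigenvalues of $E$ but does nothing to the Lipschitz constant of the map from the data to $E$; concretely, in the Cochran representation one has $\|\mathring E(\boldsymbol\epsilon)-\mathring E(\boldsymbol\epsilon')\|_F\le\|E(\boldsymbol\epsilon)-E(\boldsymbol\epsilon')\|_F\le n^{-1}(\|\boldsymbol\epsilon\|+\|\boldsymbol\epsilon'\|)\|\boldsymbol\epsilon-\boldsymbol\epsilon'\|_F$, so the Lipschitz constant of $\mathring\Phi^\varepsilon(i,i)$ still blows up with $\|\boldsymbol\epsilon\|$. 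The $Y$-gradient is worse: $\partial_Y P_{\mathbf y}$ involves $(YY')^{-1}$, which is neither defined everywhere nor controlled by truncating $E,H$. A correct repair is to truncate the \emph{data} matrices themselves (e.g.\ $W\mapsto W\min(1,K\sqrt n/\|W\|)$), or to combine Kirszbraun extension from the good event with the deterministic bound $|\Phi^\varepsilon(i,i)|\le C\varepsilon^{-2}$ from (iii); either works but needs more than the single sentence you allot. The paper's martingale route sidesteps this entirely, since the one-step bound uses only the positivity of $\mathfrak I^\varepsilon_{1,(i)}\succeq\varepsilon I_p$, not any data-norm control.
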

\begin{lem} \label{lem.071008} Given any $\eta>0$ and $\varepsilon>0$, we have
\begin{eqnarray}
\sup_{z\in\mathcal{D}_2\setminus\mathcal{D}_3}|\mathbb{E}\tilde{\Phi}^\varepsilon(i,i)-\mathbb{E}\mathring{\tilde{\Phi}}(i,i)|\leq C_1\varepsilon,\quad \sup_{z\in\mathcal{D}_2\setminus\mathcal{D}_3}|\mathbb{E}\Psi^\varepsilon(i,i)-\mathbb{E}\mathring{\Psi}(i,i)|\leq C_2\varepsilon \label{071010}
\end{eqnarray}
with some positive constants $C_1$ and $C_2$ (depending on $\eta$ only). Moreover, for any $z\in [1+\eta,2]$,
\begin{eqnarray}
\mathbb{E}\mathring{\tilde{\Phi}}(i,i)\to f(z),\quad \mathbb{E}\mathring{\Psi}(i,i)\to \varrho(z). \label{071011}
\end{eqnarray}
\end{lem}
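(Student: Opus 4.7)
My plan decomposes the proof into three pieces. First, (5.21) reduces to a truncation argument. On the overwhelming-probability event where $\mathring E=E$, $\mathring H=H$, $\mathring{\mathcal E}=\mathcal E$ and $\mathring{\mathcal H}=\mathcal H$ all hold, one has $\mathring{\tilde\Phi}=\tilde\Phi$ and $\mathring\Psi=\Psi$, so part (ii) of Lemma~\ref{lem.071001} yields $|\tilde\Phi^\varepsilon(i,i)-\mathring{\tilde\Phi}(i,i)|\le C\varepsilon$ and $|\Psi^\varepsilon(i,i)-\mathring\Psi(i,i)|\le C\varepsilon$ uniformly on $\mathcal D_2$. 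On the complementary event, parts (iii)--(iv) of Lemma~\ref{lem.071001} give the deterministic bounds $\|\tilde\Phi^\varepsilon\|,\|\Psi^\varepsilon\|\le C(\varepsilon^{-1}+\varepsilon^{-2})$ and $\|\mathring{\tilde\Phi}\|,\|\mathring\Psi\|\le C\varepsilon^{-1}$; since this event has probability at most $n^{-\ell}$ for any $\ell$, its contribution to the expectations is $o(\varepsilon)$ for $n$ large, which gives (\ref{071010}).

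For (\ref{071011}) the first observation is orthogonal invariance: since truncating eigenvalues commutes with orthogonal conjugation, the pair $(\mathring E,\mathring H)$ remains independent and orthogonally invariant in $\mathbb R^p$, so $\mathbb E\mathring{\tilde\Phi}$ is a scalar multiple of $I_p$ and $\mathbb E\mathring{\tilde\Phi}(i,i)=p^{-1}\mathbb E\,\text{tr}\,\mathring{\tilde\Phi}$ independently of $i$. The analogous identity $\mathbb E\mathring\Psi(i,i)=q^{-1}\mathbb E\,\text{tr}\,\mathring\Psi$ follows from the orthogonal invariance of $(\mathring{\mathcal E},\mathring{\mathcal H})$ in $\mathbb R^q$.

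For the limiting normalized traces I would apply the R-transform machinery of Section~2. For $z\in[1+\eta,2]$, set $w=z/(z-1)>0$, so that $\tilde\Phi=(E+wH)^{-1}$. By Cochran, $E$ and $H$ are independent scaled Wishart matrices with R-transforms $R_E(\omega)=c_2/(1-c_1\omega)$ and $R_H(\omega)=(1-c_2)/(1-c_1\omega)$, and asymptotic freeness gives $R_{E+wH}(\omega)=R_E(\omega)+wR_H(w\omega)$. The equation $K_{E+wH}(u)=0$, after clearing denominators and substituting $w=z/(z-1)$, reduces to the quadratic
\[
c_1(1-c_1)z\,u^2-[(1-2c_1)z+(c_1-c_2)]\,u-(z-1)=0,
\]
whose discriminant simplifies to $\ell(z)^2$. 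Selecting the root with $-u\to 1/(1-c_1)$ as $z\to\infty$ (consistent with the Marchenko--Pastur Stieltjes transform of $S_{\mathbf{ww}}$ at the origin) gives $-u=f(z)$, and Lemma~\ref{lem.071001}(iv) together with bounded convergence promotes this to $\mathbb E\mathring{\tilde\Phi}(i,i)\to f(z)$. For $\mathring\Psi$ I would use the identity $\Psi=\mathcal H\Upsilon\mathcal E=\frac{1}{1-z}\mathcal H+\frac{z}{1-z}\mathcal H\Upsilon\mathcal H$ established in Section~4.1, take normalized trace, and invoke the subordination formula for the free additive convolution of $\mathcal E$ and $w\mathcal H$ to obtain
\[
q^{-1}\,\text{tr}\,\Psi\longrightarrow\frac{\omega_2(0)\bigl(1+\omega_2(0)s_0\bigr)}{z},
\]
where $s_0=s_{\mathcal E+w\mathcal H}(0)$ and $\omega_2$ is the subordination function satisfying $s_{w\mathcal H}(\omega_2(0))=s_0$. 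The analogous quadratic (obtained by the swap $c_1\leftrightarrow c_2$) gives $s_0$ and, after simplification, $\omega_2(0)=(\ell(z)-z-(c_1-c_2))/2$ in closed form; a short calculation using the identity $(\ell(z)-z-(c_1-c_2))(z-(c_1-c_2)-\ell(z))=-2z[z-\ell(z)-(c_1+c_2-2c_1c_2)]$ then collapses the right-hand side to $(c_1+c_2-2c_1c_2-z+\ell(z))/(2c_2)=h(z)-c_1=\varrho(z)$.

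The hard part will be the final algebraic collapse for $\mathring\Psi$: the subordination expression is a ratio of polynomials in $z$, $\ell(z)$, $c_1$ and $c_2$, and reducing it to the clean form $\varrho(z)$ demands careful tracking of square-root branches (pinned down by the asymptotics at $z\to\infty$, e.g.\ $\omega_2(0)\to-c_1(1-c_2)$ and $s_0\to1/(1-c_2)$) together with repeated use of the fact that the same $\ell(z)^2$ serves as the discriminant of both quadratics.
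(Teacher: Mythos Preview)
Your argument for \eqref{071010} and the reduction to normalized traces via orthogonal invariance match the paper exactly, and your treatment of $\mathbb{E}\mathring{\tilde\Phi}(i,i)$ is the same R-transform calculation as the paper's, up to the harmless change of variable $w=z/(z-1)$.

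Where you diverge is in the computation of the limit of $q^{-1}\mathbb{E}\,\text{tr}\,\mathring\Psi$. You pass through the identity $\Psi=\mathcal H\Upsilon\mathcal E=(1-z)^{-1}\mathcal H+z(1-z)^{-1}\mathcal H\Upsilon\mathcal H$ and then appeal to subordination to evaluate $q^{-1}\text{tr}(\mathcal H\Upsilon\mathcal H)$. That works, but note that it implicitly uses Biane's \emph{operator-valued} subordination $E_{\mathcal H}[(\mathcal E+w\mathcal H)^{-1}]=(w\mathcal H-\omega_2(0))^{-1}$, a deeper statement than the scalar R-transform additivity reviewed in Section~2, and it leaves you with the algebraic collapse you flag as the hard part. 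The paper avoids all of this: it works directly with the representation $\mathring\Psi=((1-z)\mathring{\mathcal H}^{-1}-z\mathring{\mathcal E}^{-1})^{-1}$, computes the Stieltjes transforms (hence R-transforms) of the scaled \emph{inverse}-Wishart pieces $(1-z)\mathring{\mathcal H}^{-1}$ and $-z\mathring{\mathcal E}^{-1}$ from the MP law by a change of variable, adds the R-transforms, and solves the resulting equation for $m_2=s_2(0)$. The inverse-Wishart R-transforms involve square roots rather than rational functions, so the equation $R_{e2}(-m_2)+R_{h2}(-m_2)+1/m_2=0$ is not a polynomial in $m_2$, but after one squaring it reduces cleanly and the correct branch is pinned down by the $z\to1$ limit. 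This route stays within the toolkit of Section~2 and sidesteps the subordination algebra entirely.
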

Now, we prove Lemma \ref{lem.071004} with Lemmas \ref{lem.071007} and \ref{lem.071008} granted.

\begin{proof}[Proof of Lemma \ref{lem.071004}]
Note that, obviously $\mathring{\tilde{\Phi}}(i,i)$ and $\mathring{\Psi}(i,i)$ are both holomorphic on $\mathcal{D}_2\setminus\mathcal{D}_3$, by definitions, so are $\mathbb{E}\mathring{\tilde{\Phi}}(i,i)$ and $\mathbb{E}\mathring{\Psi}(i,i)$. In addition, by (iv) of Lemma \ref{lem.071001}, it follows from Montel's theorem that both $\{\mathbb{E}\mathring{\tilde{\Phi}}_n(i,i)\}_{n\geq1}$ and $\{\mathbb{E}\mathring{\Psi}_n(i,i)\}_{n\geq1}$ are normal families on ${\mathcal{D}}_2\setminus \mathcal{D}_3$. Hence, each subsequence of $\{\mathbb{E}\mathring{\tilde{\Phi}}_n(i,i)\}_{n\geq1}$ (resp. $\{\mathbb{E}\mathring{\Psi}_n(i,i)\}_{n\geq1}$) contains a further subsequence converging uniformly on each compact subset of ${\mathcal{D}}_2\setminus \mathcal{D}_3$ to a holomorphic function. Selecting one of such convergent subsequences, we call its limit $\hat{f}(z)$ (resp. $\hat{\varrho}(z)$). Now, according to (\ref{071011}), we already know that $\mathbb{E}\mathring{\tilde{\Phi}}_n(i,i)\to f(z)$ and $\mathbb{E}\mathring{\Psi}_n(i,i)\to \varrho(z)$ for all $z\in[1+\eta,2]$, which imply that $\hat{f}(z)=f(z)$ and $\hat{\varrho}(z)=\varrho(z)$ on $[1+\eta,2]$. Then Identity Theorem leads us to the fact that $\hat{f}(z)=f(z)$ and $\hat{\varrho}(z)=\varrho(z)$ on each compact subset of $\mathcal{D}_2\setminus\mathcal{D}_3$. Hence we get the fact that $\mathbb{E}\mathring{\tilde{\Phi}}(i,i)\to f(z)$ and $\mathbb{E}\mathring{\Psi}(i,i)\to \varrho (z)$ uniformly on each compact subset of $\mathcal{D}_2\setminus\mathcal{D}_3$. By slightly adjusting the value of $\eta$ and $\varepsilon$, we can safely say that $\mathbb{E}\mathring{\tilde{\Phi}}(i,i)\to f(z)$ and $\mathbb{E}\mathring{\Psi}(i,i)\to \varrho (z)$ uniformly on $\mathcal{D}_2\setminus\mathcal{D}_3$. Combining this fact with (\ref{071010}) and Lemma \ref{lem.071007} yields  that for some positive constant $C$
\begin{eqnarray*}
\sup_{z\in\mathcal{D}_2\setminus\mathcal{D}_3}\max\{|\tilde{\Phi}^\varepsilon(i,i)-f(z)|, |\Psi^\varepsilon(i,i)-\varrho(z)|\}\leq C\varepsilon
\end{eqnarray*}
holds with overwhelming probability.  What remains is to extend the estimate to the whole $\mathcal{D}_2$. Now for any $z_0\in \mathcal{D}_3$, we define $z_0^\varepsilon=\Re z_0+\sqrt{-1}\varepsilon$. Then it suffices to show
\begin{eqnarray}
\sup_{z_0\in \mathcal{D}_3}\max\{||(\tilde{\Phi}^\varepsilon_n(z_0))-(\tilde{\Phi}^\varepsilon_n(z_0^\varepsilon))||, ||(\Psi^\varepsilon_n(z_0))-(\Psi^\varepsilon_n(z_0^\varepsilon))||\}\leq C\varepsilon \label{071014}
\end{eqnarray}
with overwhelming probability, and 
\begin{eqnarray}
\sup_{z_0\in \mathcal{D}_3}\max\{|(f(z_0)-f(z_0^\varepsilon)|, |\varrho(z_0)-\varrho(z_0^\varepsilon)|\}\leq C\varepsilon. \label{071015}
\end{eqnarray}
Note that (\ref{071015}) is easy to check. Thus we only verify (\ref{071014}) below. We will state the proof for $\tilde{\Phi}^\varepsilon$ only, the case of $\Psi^\varepsilon$ is similar.  By definition, we have
\begin{eqnarray}
&&||\tilde{\Phi}^\varepsilon(z_0)-\tilde{\Phi}^\varepsilon(z_0^\varepsilon)||\nonumber\\
&&\hspace{5ex}= |(\varepsilon-\Im z_0)|\cdot (||\Phi^\varepsilon(z_0)||+||(1-z_0^\varepsilon) \Phi^\varepsilon(z_0^\varepsilon)(E+H+\varepsilon I_p)\Phi^\varepsilon(z_0))||)\leq C\varepsilon \label{071143}
\end{eqnarray}
with overwhelming probability, by taking into account (i) of Lemma \ref{lem.071001} and the fact that $E+H=S_{\mathbf{w}\mathbf{w}}$ is bounded in operator norm with overwhelming probability, according to (\ref{0726100}). Hence, we conclude the proof.
\end{proof}
Consequently, it suffices to prove Lemmas \ref{lem.071003}, \ref{lem.071007} and \ref{lem.071008} in the remaining part of this paper. We will handle Lemma \ref{lem.071003} on the negligible entries in Section 6 and Lemmas \ref{lem.071007} and \ref{lem.071008} on the non-negligible entries in Section 7.\\\\
\section{Proof of Lemma \ref{lem.071003}} In this section, we will frequently need the following rough large deviation inequalities on the columns of a Haar distributed orthogonal matrix.
\begin{lem} \label{lem.071604} Assume that $\boldsymbol{\upsilon}_i$ and $\boldsymbol{\upsilon}_j$ ($i\neq j$) are respectively $i$th and $j$th columns  of an $n$ by $n$ random orthogonal matrix $\mathcal{U}_n$, which is Haar distributed on the orthogonal group $\mathcal{O}(n)$. Let $A$ be an $n$ by $n$ matrix and $\mathbf{b}$ be an $n$-dimensional complex vector, both independent of $\mathcal{U}_n$. Moreover, $||A||, ||\mathbf{b}||\leq C$ for some positive constant $C$. The following three statements hold with overwhelming probability for any given $\varepsilon>0$:
\begin{eqnarray*}
(i): |\boldsymbol{\upsilon}_i'A\boldsymbol{\upsilon}_i-\frac{1}{n}\text{tr} A|\leq \varepsilon , \quad (ii): |\boldsymbol{\upsilon}_i'A\boldsymbol{\upsilon}_j|\leq \varepsilon,\quad (iii): |\boldsymbol{\upsilon}_i'\mathbf{b}|\leq \varepsilon
\end{eqnarray*}
\end{lem}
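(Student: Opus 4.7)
All three bounds are instances of the standard Gaussian-type concentration of measure on the orthogonal group, and since $A$ and $\mathbf{b}$ are independent of $\mathcal{U}_n$ I would simply condition on them throughout, splitting complex entries into real and imaginary parts so that the functions in question are real-valued. The two structural facts I would invoke are: (a) a single column $\boldsymbol{\upsilon}_i$ of a Haar-distributed $\mathcal{U}_n\in\mathcal{O}(n)$ is uniformly distributed on $S^{n-1}$, with $\mathbb{E}[\boldsymbol{\upsilon}_i\boldsymbol{\upsilon}_i']=n^{-1}I_n$; and (b) conditionally on $\boldsymbol{\upsilon}_i$, the column $\boldsymbol{\upsilon}_j$ (with $j\neq i$) is uniformly distributed on the unit sphere of the hyperplane $\boldsymbol{\upsilon}_i^{\perp}\subset\mathbb{R}^n$.

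The concentration itself will come from L\'evy's inequality on the sphere: for every $L$-Lipschitz function $F:S^{m-1}\to\mathbb{R}$ there exists an absolute constant $c_0>0$ such that
$$
\mathbb{P}\bigl(|F(\boldsymbol{\upsilon})-\mathbb{E}F(\boldsymbol{\upsilon})|\geq t\bigr)\leq 2\exp\bigl(-c_0 m t^2 / L^2\bigr).
$$
Since for every fixed $\ell>0$ and every fixed $\varepsilon>0$ the tail $2\exp(-c_0 n\varepsilon^2/(4C^2))$ is $o(n^{-\ell})$, each such estimate automatically upgrades to an overwhelming-probability statement in the sense used in the paper.

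For (i) I would apply the inequality to $F(\boldsymbol{\upsilon}_i):=\boldsymbol{\upsilon}_i'A\boldsymbol{\upsilon}_i$, whose gradient on the sphere is $(A+A')\boldsymbol{\upsilon}_i$, hence its Lipschitz constant is at most $2\|A\|\leq 2C$; the mean equals $n^{-1}\mathrm{tr}(A)$ by fact (a). For (iii), $F(\boldsymbol{\upsilon}_i):=\mathbf{b}'\boldsymbol{\upsilon}_i$ is $\|\mathbf{b}\|$-Lipschitz and has mean zero by the $\boldsymbol{\upsilon}_i\mapsto-\boldsymbol{\upsilon}_i$ symmetry of the uniform law on $S^{n-1}$. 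For (ii) I would first condition on $\boldsymbol{\upsilon}_i$ and view $F(\boldsymbol{\upsilon}_j):=(A'\boldsymbol{\upsilon}_i)'\boldsymbol{\upsilon}_j$ as a linear functional on the unit sphere of $\boldsymbol{\upsilon}_i^{\perp}$, which has dimension $n-1$; the same sign-flip symmetry gives conditional mean zero, while the Lipschitz constant is bounded by $\|A'\boldsymbol{\upsilon}_i\|\leq C$. Applying L\'evy's inequality on this codimension-one sphere and integrating out $\boldsymbol{\upsilon}_i$ produces the desired bound.

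The argument is essentially routine; the only point that warrants any care is (ii), namely pinning down the correct conditional distribution of $\boldsymbol{\upsilon}_j$ given $\boldsymbol{\upsilon}_i$ so that sphere concentration is applied in the proper codimension-one subspace. If one prefers to avoid this conditioning step, an equivalent route is to realize the columns of $\mathcal{U}_n$ via Gram--Schmidt on i.i.d.\ standard Gaussian vectors and combine the Hanson--Wright inequality with the concentration of the normalizing factors $\|\mathbf{g}_i\|/\sqrt{n}\to 1$, but the L\'evy-inequality route is the cleanest.
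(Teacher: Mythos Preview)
Your argument is correct, but it is a genuinely different route from the paper's. The paper does not invoke L\'evy concentration on the sphere at all; instead it appeals to Jiang's coupling result (quoted as Lemma~\ref{lem.07260001}) to approximate the two Haar columns $\boldsymbol{\upsilon}_i,\boldsymbol{\upsilon}_j$ by i.i.d.\ $N(\mathbf{0},n^{-1}I_n)$ vectors $\mathbf{g}_i,\mathbf{g}_j$ with an entrywise error of order $n^{-1+\varepsilon}$, and then transfers (i)--(iii) from the Gaussian case, where they are classical, via the crude bound $\|\boldsymbol{\upsilon}_\beta-\mathbf{g}_\beta\|\le n^{-1/2+\varepsilon}$. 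Your L\'evy-inequality approach is shorter and self-contained: it needs no external coupling lemma and handles the conditional structure for (ii) cleanly through the fact that $\boldsymbol{\upsilon}_j\mid\boldsymbol{\upsilon}_i$ is uniform on the unit sphere of $\boldsymbol{\upsilon}_i^{\perp}$. The paper's detour through Gaussian approximation buys nothing extra here, though it has the pedagogical advantage of reducing to concentration statements (essentially Hanson--Wright) that many readers will already have at hand; amusingly, the Gram--Schmidt alternative you mention as a fallback is in spirit exactly what the paper does.
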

We state the proof of Lemma \ref{lem.071604} in the Appendix, which relies on an approximation of columns of Haar distributed matrix by independent Gaussian vectors, see \cite{Jiang2005}, for instance.
\subsection{Bound on $\Phi S_{\mathbf{wy}}(i,j)$}
At first, we recall the definition
\begin{eqnarray*}
\Phi S_{\mathbf{wy}}=((1-z)S_{\mathbf{wy}}S_{\mathbf{yy}}^{-1}S_{\mathbf{yw}}-z S_{\mathbf{ww}})^{-1}S_{\mathbf{wy}}
\end{eqnarray*}
Writing the SVDs of $W/\sqrt{n}$ and $Y/\sqrt{n}$ as
\begin{eqnarray*}
W/\sqrt{n}=U_\mathbf{w}\Lambda_{\mathbf{w}} V_{\mathbf{w}}',\quad Y/\sqrt{n}=U_\mathbf{y}\Lambda_{\mathbf{y}} V_{\mathbf{y}}',
\end{eqnarray*}
and introducing the additional notations
\begin{eqnarray*}
D_\mathbf{w}:=\Lambda_{\mathbf{w}}\Lambda_{\mathbf{w}}', \quad \hat{Y}:=YV_{\mathbf{w}},\quad P_{\hat{\mathbf{y}}}:=\hat{Y}'(\hat{Y}\hat{Y}')^{-1}\hat{Y},
\end{eqnarray*}
one has $\hat{Y}\stackrel{d}=Y$ thus $P_{\hat{\mathbf{y}}}\stackrel{d}=P_{\mathbf{y}}$, and
\begin{eqnarray*}
\Phi S_{\mathbf{wy}}=U_\mathbf{w}((1-z)\Lambda_{\mathbf{w}}P_{\hat{\mathbf{y}}}\Lambda_{\mathbf{w}}'-z D_\mathbf{w})^{-1}\Lambda_{\mathbf{w}}\hat{Y}'/\sqrt{n}.
\end{eqnarray*}
Now note that $W$ is left orthogonally invariant (in distribution), consequently,  if $\tilde{U}$ is a Haar distributed orthogonal matrix independent of $W$ and $Y$, $\tilde{U}W$ and $W$ are identically distributed and $\tilde{U}W$ is independent of $Y$ obviously. In addition, note that the conditional distribution of $\tilde{U}U_\mathbf{w}$ given $\Lambda_\mathbf{w}$, $V_\mathbf{w}$ and $Y$ does not depend on the realization of $\Lambda_\mathbf{w}$, $V_\mathbf{w}$ and $Y$, owing to the fact that $\tilde{U}$ is Haar distributed. That means, $\tilde{U}U_\mathbf{w}$ is independent of $\Lambda_\mathbf{w}$, $V_\mathbf{w}$ and $Y$. Consequently, since $\tilde{U}W$ and $W$ are identically distributed, we arrive at the conclusion that $U_\mathbf{w}$ is independent of $\Lambda_\mathbf{w}$, $V_\mathbf{w}$ and $Y$.
Now, setting 
\begin{eqnarray*}
\Pi(z):=((1-z)\Lambda_{\mathbf{w}}P_{\hat{\mathbf{y}}}\Lambda_{\mathbf{w}}'-z D_\mathbf{w})^{-1}\Lambda_{\mathbf{w}}\hat{Y}/\sqrt{n},
\end{eqnarray*}
we  have $
\Phi S_{\mathbf{wy}}=U_\mathbf{w}\Pi(z),
$
where $U_\mathbf{w}$ is independent of $\Pi(z)$, by the above discussion. 
 Let $\mathbf{u}_{\mathbf{w},i}$ be the transpose of the $i$th row of $U_\mathbf{w}$, and $\boldsymbol{\pi}_j(z)$ be the $j$th column of $\Pi(z)$.  
Then
\begin{eqnarray*}
(\Phi S_{\mathbf{wy}})(i,j)=\mathbf{u}_{\mathbf{w},i}'\boldsymbol{\pi}_j(z)
\end{eqnarray*} 
Now, note that 
\begin{eqnarray}
||((1-z)\Lambda_{\mathbf{w}}P_{\hat{\mathbf{y}}}\Lambda_{\mathbf{w}}'-z D_\mathbf{w})^{-1}||=||\Phi(z)||, \label{072503}
\end{eqnarray} 
hence is bounded in $\mathcal{D}_2$ with overwhelming probability, according to (i) of Lemma \ref{lem.071001}. It follows from (\ref{0726100}) that $||\Lambda_{\mathbf{w}}||$ and $||\hat{Y}/\sqrt{n}||$ are bounded with overwhelming probability. Consequently, $||\Pi(z)||$ is bounded uniformly on $\mathcal{D}_2$ with overwhelming probability, which implies that $||\pi_j(z)||$ is bounded
with overwhelming probability. Applying (iii) of Lemma \ref{lem.071604}  yields that 
\begin{eqnarray}
|(\Phi(z) S_{\mathbf{wy}})(i,j)|\leq \varepsilon  \label{072504}
\end{eqnarray} with overwhelming probability for any $z\in\mathcal{D}_2$.  To strengthen the bound to be uniform on $\mathcal{D}_2$, we introduce the event 
\begin{eqnarray*}
\Xi_4\equiv\Xi_4(n,K, C)&:=&\bigg{\{}\max\{||E||,||H||, ||\mathcal{E}^{-1}||,||\mathcal{H}^{-1}||, ||W/\sqrt{n}||, ||Y/\sqrt{n}||\}\leq K\bigg{\}}\nonumber\\
&&\hspace{-2ex}\bigcap\bigg{\{}\sup_{z\in\mathcal{D}_2}\max\{||\Phi(z)||, ||\Phi^\varepsilon(z)||, ||\Psi(z)||, ||\Psi^\varepsilon(z)||\}\leq C\bigg{\}}.
\end{eqnarray*}
Note that for sufficiently large $K$ and $C$, $\Xi_4$ holds with overwhelming probability by Assumption \ref{ass.070301} and (i) of Lemma \ref{lem.071001}. Now, we assign an lattice on $\mathcal{D}_2$ of size $O(\varepsilon^{-2})$ such that each two adjacent points in this lattice have a distance $\varepsilon$ (say). 
In addition, it is easy too see that for two adjacent points $z_1$ and $z_2$ in the lattice, 
\begin{eqnarray}
&&||(\Phi(z_1)-\Phi(z_2))S_{\mathbf{wy}}||\nonumber\\ &&\hspace{7ex}\leq n^{-1}||W||\cdot||Y||\cdot ||\Phi(z_1)||\cdot ||\Phi(z_2)||\cdot \varepsilon (||E||+||H||+\varepsilon)\leq C'\varepsilon  \label{072505}
\end{eqnarray}
in $\Xi_4$ with some positive constant $C'$. In addition, by the definition of {\emph{overwhelming probability}}, one see that (\ref{072504}) holds uniformly on the lattice with overwhelming probability. Then in $\Xi_4$, this bound can be further extended to the whole $\mathcal{D}_2$ by using  (\ref{072505}). Hence, we finally get the conclusion that (\ref{072504}) holds uniformly on $\mathcal{D}_2$ with overwhelming probability, i.e. (\ref{072506}) is valid.
\subsection{Bound on $|\Phi (i,j)|$, $|\mathcal{E}(i,j)|$, $|\Psi(i,j)|$ when $i\neq j$}
At first, according to (\ref{071901}), the bound on $|\mathcal{E}(i,j)|$ is valid. It suffices to check that for $|\Phi (i,j)|$ and $|\Psi(i,j)|$. Due to similarity, we just state the proof for $|\Phi (i,j)|$ in the sequel. Note that, by the notation introduced above, we see that
\begin{eqnarray*}
\Phi(z)=U_\mathbf{w}((1-z)\Lambda_{\mathbf{w}}P_{\hat{\mathbf{y}}}\Lambda_{\mathbf{w}}'-z D_\mathbf{w})^{-1}U_\mathbf{w}.
\end{eqnarray*}
Hence
\begin{eqnarray*}
\Phi(z)(i,j)=\mathbf{u}_{\mathbf{w},i}'((1-z)\Lambda_{\mathbf{w}}P_{\hat{\mathbf{y}}}\Lambda_{\mathbf{w}}'-z D_\mathbf{w})^{-1} \mathbf{u}_{\mathbf{w},j}.
\end{eqnarray*}
Then, using (\ref{072503}), (i) of Lemma \ref{lem.071001} and  (ii) of Lemma \ref{lem.071604}, we obtain 
\begin{eqnarray}
|\Phi (i,j)|\leq C\varepsilon
\end{eqnarray} 
with overwhelming probability, for each $z\in\mathcal{D}_2$. In a similar vein, the uniform bound on $\mathcal{D}_2$ can be derived via a lattice argument as that for (\ref{072504}). We do not reproduce the details here.   Moreover,
the discussion on $|\Psi(i,j)|$ is just analogous.
\section{Proof of Lemmas \ref{lem.071007} and \ref{lem.071008}}
\subsection{Proof of Lemma \ref{lem.071007}}
To address the diagonal entries of $\Phi^\varepsilon$ and $\Psi^\varepsilon$, we need to use (i) of Lemma \ref{lem.071604}. At first, since $\Phi^\varepsilon$ and $\Psi^\varepsilon$ are orthogonally invariant, their distributions do not alter under conjugation by permutation matrices. Consequently, the diagonal entries of $\Phi^\varepsilon$ are identically distributed, so are those of $\Psi^\varepsilon$. That means
\begin{eqnarray}
\mathbb{E}\Phi^\varepsilon(i,i)=\frac1p\mathbb{E}\text{tr} \Phi^\varepsilon,\quad \mathbb{E}\Psi^\varepsilon(j,j)=\frac1q\mathbb{E}\text{tr} \Psi^\varepsilon\quad \text{for}\quad i=1,\ldots, p, \quad j=1,\ldots,q.\label{070305}
\end{eqnarray}
By the notation introduced in the last section, we can write
\begin{eqnarray*}
\Phi^\varepsilon(i,i)=\mathbf{u}_{\mathbf{w},i}' ((1-z)\Lambda_{\mathbf{w}}P_{\hat{\mathbf{y}}}\Lambda_{\mathbf{w}}'-z D_\mathbf{w}-z\varepsilon I_p)^{-1} \mathbf{u}_{\mathbf{w},i}
\end{eqnarray*}
Now, using (\ref{072503}), (i) of Lemma \ref{lem.071001} and (i) of Lemma \ref{lem.071604}, we see that for some positive constant $C$ and $z\in \mathcal{D}_2$, 
\begin{eqnarray}
|\Phi^\varepsilon(z)(i,i)-p^{-1}\text{tr} \Phi^\varepsilon(z)|\leq C\varepsilon \label{071701}
\end{eqnarray}
holds  overwhelming probability.  Analogously, we also have
\begin{eqnarray}
|\Psi^\varepsilon(i,i)(z)-q^{-1}\text{tr} \Psi^\varepsilon(z)|\leq C\varepsilon \label{071702}
\end{eqnarray}
with overwhelming probability.  Then the lattice argument used for (\ref{072504}) is also applicable, mutatis mutandis, to strengthen (\ref{071701}) and (\ref{071702})  to be uniform ones on $\mathcal{D}_2\setminus\mathcal{D}_3$. We omit the details here.  Consequently, in light of (\ref{070305}), it then 
suffices to show the following lemma.
\begin{lem} \label{lem.071101}We have the following two concentration inequalities, for any given $\varepsilon>0$,
\begin{eqnarray}
\sup_{z\in\mathcal{D}_2\setminus\mathcal{D}_3}|p^{-1}\text{tr} \Phi^\varepsilon(z)-p^{-1}\mathbb{E} \text{tr}\Phi^\varepsilon(z) |\leq \varepsilon, \label{070302}
\end{eqnarray}
and 
\begin{eqnarray}
\sup_{z\in\mathcal{D}_2\setminus\mathcal{D}_3}|q^{-1}\text{tr} \Psi^\varepsilon (z)-q^{-1}\mathbb{E} \text{tr}\Psi^\varepsilon (z) |\leq \varepsilon \label{070302}
\end{eqnarray}
hold with overwhelming probability.
\end{lem}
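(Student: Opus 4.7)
The plan is to establish each concentration inequality via a column-wise martingale decomposition of the underlying Gaussian data, followed by an $\varepsilon$-net over the spectral parameter $z$. Because the two statements are parallel, I focus on the inequality for $p^{-1}\text{tr}\,\Phi^\varepsilon(z)$; the inequality for $\Psi^\varepsilon$ follows by the same scheme, with the essential additional ingredient being the uniform operator-norm control of $\mathcal E^{-1}$ and $\mathcal H^{-1}$ on $\Xi_4$, supplied by (\ref{0726100}) under Assumption~\ref{ass.070301}.

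The first step is to decouple $\Phi^\varepsilon$ from $Y$. By Cochran's theorem, conditionally on $Y$ the matrices $nE$ and $nH$ are independent with distributions $\text{Wishart}_p(I_p,q)$ and $\text{Wishart}_p(I_p,n-q)$, neither depending on the realization of $Y$. So one may represent $nE=\tilde W\tilde W'$ and $nH=\hat W\hat W'$ with independent Gaussian matrices $\tilde W\in\mathbb R^{p\times q}$ and $\hat W\in\mathbb R^{p\times(n-q)}$. Enumerate the combined columns of $(\tilde W,\hat W)$ as $\mathbf w_1,\ldots,\mathbf w_n$, and let $\mathbb E_\ell$ denote conditional expectation given $(\mathbf w_1,\ldots,\mathbf w_\ell)$. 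Then one obtains the martingale decomposition
\[
\text{tr}\,\Phi^\varepsilon(z)-\mathbb E\,\text{tr}\,\Phi^\varepsilon(z)=\sum_{\ell=1}^n D_\ell(z),\qquad D_\ell(z):=(\mathbb E_\ell-\mathbb E_{\ell-1})\,\text{tr}\,\Phi^\varepsilon(z).
\]
Writing $\Phi^{\varepsilon,(\ell)}$ for the analogue with the column $\mathbf w_\ell$ removed, the identity $D_\ell(z)=(\mathbb E_\ell-\mathbb E_{\ell-1})\bigl(\text{tr}\,\Phi^\varepsilon(z)-\text{tr}\,\Phi^{\varepsilon,(\ell)}(z)\bigr)$ together with the Sherman--Morrison formula yields the rank-one bound $|D_\ell(z)|\le 2|\text{tr}\,\Phi^\varepsilon(z)-\text{tr}\,\Phi^{\varepsilon,(\ell)}(z)|\le C||\Phi^\varepsilon(z)||\,||\Phi^{\varepsilon,(\ell)}(z)||\,||\mathbf w_\ell||^2/n$.

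Next I would truncate to the overwhelming-probability event $\Xi_4\cap\{\max_\ell||\mathbf w_\ell||^2/n\le K\}$. Using (iii) of Lemma~\ref{lem.071001}, this renders $|D_\ell(z)|\le C_\varepsilon$ uniformly in $\ell$ and in $z\in\mathcal D_2\setminus\mathcal D_3$, with $C_\varepsilon$ depending on $\varepsilon$ only. Azuma--Hoeffding then yields $|\sum_\ell D_\ell(z)|\le C_\varepsilon\sqrt{n\log n}$ with overwhelming probability at any fixed $z$, and dividing by $p\asymp n$ produces the pointwise estimate since $\sqrt{\log n/n}\ll\varepsilon$ for large $n$. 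To upgrade from pointwise to the uniform estimate on $\mathcal D_2\setminus\mathcal D_3$ announced in the lemma, I would deploy the lattice trick already used around (\ref{072505}): place an $\varepsilon^3$-net of polynomial size in $\mathcal D_2\setminus\mathcal D_3$, union-bound the pointwise concentration across the net, and interpolate via the elementary Lipschitz bound $||\Phi^\varepsilon(z_1)-\Phi^\varepsilon(z_2)||\le|z_1-z_2|\,||\Phi^\varepsilon(z_1)||\,||\Phi^\varepsilon(z_2)||\,(||E+H||+\varepsilon)$, whose right-hand side is $O(|z_1-z_2|)$ on $\Xi_4$.

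The main technical delicacy arises in the $\Psi^\varepsilon$ half: because $\Psi^\varepsilon$ is built from $\mathcal E^{-1}$ and $\mathcal H^{-1}$, a rank-one change in the $Y$-columns perturbs $\mathcal H$ by a matrix of $O(1)$ operator norm and would a priori perturb $\mathcal H^{-1}$ unboundedly. The restriction to $\Xi_4$, which provides via (\ref{0726100}) a positive lower bound on the smallest eigenvalues of $\mathcal E$ and $\mathcal H$ and hence uniform upper bounds on $||\mathcal E^{-1}||$ and $||\mathcal H^{-1}||$, is exactly what converts the identity $\mathcal H^{-1}_{(\ell)}-\mathcal H^{-1}=-\mathcal H^{-1}(\mathcal H_{(\ell)}-\mathcal H)\mathcal H^{-1}_{(\ell)}$ into an $O(1/n)$-scale controlled perturbation of the trace of $\Psi^\varepsilon$; with that in hand the rest of the scheme proceeds verbatim.
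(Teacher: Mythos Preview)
Your overall scheme---martingale decomposition over the Wishart columns, followed by a lattice argument over $z$---is exactly the paper's. The divergence, and the gap, is in how the one-step difference is controlled.

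For $\Phi^\varepsilon$ you assert $|D_\ell(z)|\le 2|\text{tr}\,\Phi^\varepsilon-\text{tr}\,\Phi^{\varepsilon,(\ell)}|$ and then bound the latter on the event $\Xi_4\cap\{\max_\ell\|\mathbf w_\ell\|^2/n\le K\}$. But $D_\ell=(\mathbb E_\ell-\mathbb E_{\ell-1})(\text{tr}\,\Phi^\varepsilon-\text{tr}\,\Phi^{\varepsilon,(\ell)})$ is a difference of conditional expectations taken over the \emph{full} probability space, so restricting the realization to a high-probability event does not bound $D_\ell$; Azuma--Hoeffding, which needs almost-sure increment bounds, therefore does not apply as written. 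The paper instead exploits the structure of the Sherman--Morrison quotient: writing the one-step trace difference as $-(1-z)n^{-1}\boldsymbol\eta_i'(\Phi_{(i)}^\varepsilon)^2\boldsymbol\eta_i\big/\bigl(1+(1-z)n^{-1}\boldsymbol\eta_i'\Phi_{(i)}^\varepsilon\boldsymbol\eta_i\bigr)$, it proves the algebraic inequality $|\boldsymbol\eta'(\Phi_{(i)}^\varepsilon)^2\boldsymbol\eta|\le\boldsymbol\eta'\Phi_{(i)}^\varepsilon(z)\Phi_{(i)}^\varepsilon(\bar z)\boldsymbol\eta$ and then cancels this against $|\Im(\boldsymbol\eta'\Phi_{(i)}^\varepsilon\boldsymbol\eta)|$ in the denominator, yielding the \emph{deterministic} bound $|\text{tr}\,\Phi^\varepsilon-\text{tr}\,\Phi_{(i)}^\varepsilon|\le|\Im z|^{-1}\|(\mathfrak I_{1,(i)}^\varepsilon)^{-1}\|\le\varepsilon^{-2}$ on $\mathcal D_4$. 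With this in hand the paper applies Burkholder's inequality (moment bounds suffice, no a.s.\ bound needed), then Markov. Your $\Phi^\varepsilon$ argument is salvageable---Lemma~\ref{lem.071001}(iii) already gives a deterministic bound on $\|\Phi^\varepsilon\|$ and $\|\Phi^{\varepsilon,(\ell)}\|$, so the only non-deterministic factor is $\|\mathbf w_\ell\|^2/n$, which has all moments and is amenable to Burkholder---but you should switch from Azuma to Burkholder and drop the appeal to $\Xi_4$.

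The $\Psi^\varepsilon$ half is where your approach genuinely breaks. You need $\|\mathcal E_{(\ell)}^{-1}\|$ and $\|\mathcal H_{(\ell)}^{-1}\|$ inside the increment bound, and these are neither deterministically bounded nor $\mathcal F_{\ell-1}$-measurable; invoking $\Xi_4$ does not help for the same reason as above. The paper avoids this entirely by a different cancellation: writing the one-step difference as $|z|\,\bigl|\tilde{\boldsymbol\eta}_i'\mathcal E_{(i)}^{-1}\Psi^\varepsilon\Psi_{(i)}^\varepsilon\mathcal E_{(i)}^{-1}\tilde{\boldsymbol\eta}_i\bigr|\big/\bigl(1+\tilde{\boldsymbol\eta}_i'\mathcal E_{(i)}^{-1}\tilde{\boldsymbol\eta}_i\bigr)$ and using $\mathcal E_{(i)}^{-1/2}(\mathfrak I_{2,(i)}^\varepsilon)^{-1}\mathcal E_{(i)}^{-1/2}\preceq I_q$ together with the deterministic bounds on $\|\Psi^\varepsilon\|$ and $\|\mathcal I_{2,(i)}^\varepsilon\|$ to show the numerator is at most $C\,\tilde{\boldsymbol\eta}_i'\mathcal E_{(i)}^{-1}\tilde{\boldsymbol\eta}_i$, which cancels against the denominator. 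The potentially unbounded quantity $\tilde{\boldsymbol\eta}_i'\mathcal E_{(i)}^{-1}\tilde{\boldsymbol\eta}_i$ thus drops out, giving a deterministic $O(1)$ bound on the one-step difference with no appeal to eigenvalue control of $\mathcal E_{(i)}$ or $\mathcal H_{(i)}$. This self-cancelling ratio structure is the missing idea in your sketch.
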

The proof of Lemma \ref{lem.071101}, based on a commonly used martingale difference technique,  will be postponed to the Appendix. 
Now, combining (\ref{070305})-(\ref{071702})  and Lemma \ref{lem.071101} yields Lemma \ref{lem.071007}.

\subsection{Proof of Lemma \ref{lem.071008}}
At first, we show (\ref{071010}). As mentioned, $\mathring{\Phi}=\Phi$ and $\mathring{\Psi}=\Psi$ for any $z$ with overwhelming probability.  It then follows from (ii) of Lemma \ref{lem.071001} that
\begin{eqnarray*}
\sup_{z\in\mathcal{D}_2}\max\{||\mathring{\Phi}(z)-\Phi^\varepsilon(z)||, ||\mathring{\Psi}(z)-\Psi^\varepsilon(z)||\}\leq C_2\varepsilon
\end{eqnarray*}
with overwhelming probability. Using the deterministic bounds in (iii) and (iv) of Lemma \ref{lem.071001} and the definition of {\emph{overwhelming probability}}, we can obtain (\ref{071010}).

Now, we we turn to (\ref{071011}). For simplicity, we introduce the notations
\begin{eqnarray}
\mathring{J}\equiv \mathring{J}_n(z):=(1-z)\mathring{E}-z\mathring{H}, \quad \mathring{\mathcal{J}}\equiv \mathring{\mathcal{J}}_n(z):= (1-z)\mathring{\mathcal{H}}^{-1}-z\mathring{\mathcal{E}}^{-1}. \label{071130}
\end{eqnarray}
Note that for $z\in[1+\eta,2]$, both $\mathring{J}$ and $\mathring{\mathcal{J}}$ are real symmetric and negative-definite, with probability 1. Then, we denote the Stieltjes transforms of the ESDs of $\mathring{J}$ and $\mathring{\mathcal{J}}$ at point $\omega$ by $s_{n1}(\omega)$ and $s_{n2}(\omega)$ respectively, it follows from (\ref{070305}) that
\begin{eqnarray*}
\mathbb{E}\mathring{\Phi}(i,i)=\mathbb{E}s_{n1}(0),\quad \mathbb{E}\mathring{\Psi}(i,i)=\mathbb{E}s_{n2}(0).
\end{eqnarray*}
By construction, we see that $\mathring{E}$, $\mathring{H}$, $\mathring{\mathcal{E}}^{-1}$ and $\mathring{\mathcal{H}}^{-1}$ are all bounded (in operator norm). Now, for given $z\in[1+\eta,2]$, let $s_1(\omega)$ and $s_2(\omega)$ be the Stieltjes transforms of the LSDs of $\mathring{J}$ and $\mathring{\mathcal{J}}$ respectively. Noticing that $\mathring{E}$, $\mathring{H}$, $\mathring{\mathcal{E}}^{-1}$ and $\mathring{\mathcal{H}}^{-1}$ are all orthogonally invariant, in light of \cite{Voiculescu1991}, we have
\begin{eqnarray*}
\mathbb{E}\Phi^\varepsilon(i,i)\to s_1(0),\quad \mathbb{E}\Psi^\varepsilon(i,i)\to s_2(0)
\end{eqnarray*}
when $n\to \infty$.  It remains to calculate $s_1(0)$ and $s_2(0)$ then. Now, we arrive at the stage to use (\ref{061802})-(\ref{061801}). To this end, we need the Stieltjes transforms for the LSDs of $(1-z)\mathring{E}$, $-z \mathring{H}$, $-z\mathring{\mathcal{E}}^{-1}$ and $(1-z)\mathring{\mathcal{H}}^{-1}$, denoted by $s_{e1}(\omega)$, $s_{h1}(\omega)$, $s_{e2}(\omega)$ and $s_{h2}(\omega)$ in the sequel, and the corresponding R-transforms will be represented by $R_{e1}(\omega)$, $R_{h1}(\omega)$, $R_{e2}(\omega)$ and $R_{h2}(\omega)$. Note that all these four Stieltjes transforms can be deduced easily by change of variables from those of the MP laws. Through some elementary calculation, we can actually get that for $z\in[1+\eta,2]$,
\begin{eqnarray*}
&&s_{e1}(\omega)=\frac{(1-z)(c_2-c_1)-\omega+\sqrt{(\omega-(1-z)(c_1+c_2))^2-4(1-z)^2c_1c_2}}{2(1-z)c_1\omega},\\
&&s_{h1}(\omega)=\frac{z(1-c_1-c_2)+\omega-\sqrt{(\omega+z(1+c_1-c_2))^2-4c_1(1-c_2)z^2}}{2z c_1 \omega},\\
&&s_{e2}(\omega)=-\omega^{-1}-\frac{c_1-c_2+z \omega^{-1}-\sqrt{(z \omega^{-1}+c_1+c_2)^2-4c_1c_2}}{2c_2\omega},\\
&&s_{h2}(\omega)=-\omega^{-1}-\frac{1-c_1-c_2-(1-z)\omega^{-1}+\sqrt{((1-z)\omega^{-1}-(1-c_1+c_2))^2-4(1-c_1)c_2}}{2c_2\omega}.
\end{eqnarray*} 
Then by using (\ref{061802}) and (\ref{061803}), we can get through elementary calculation that
\begin{eqnarray*}
R_{e1}(\omega)=\frac{(1-z)c_2}{1-(1-z)c_1\omega},\quad R_{h1}(\omega)=-\frac{z(1-c_2)}{1+z c_1 \omega}.
\end{eqnarray*}
Denote $m_1:=s_1(0)$. It follows from (\ref{061802})-(\ref{061803}) that
\begin{eqnarray*}
\frac{(1-z)c_2}{1+(1-z)c_1m_1}-\frac{z(1-c_2)}{1-z c_1 m_1}-\frac{1}{m_1}=0,
\end{eqnarray*}
which implies that
\begin{eqnarray*}
z(1-z)(c_1^2-c_1)m_1^2+(c_2-c_1+2z c_1-z)m_1-1=0.
\end{eqnarray*}
Solving the above equation we get
\begin{eqnarray*}
m_1=\frac{-(c_2-c_1+2z c_1-z)- \sqrt{z^2+(4c_1c_2-2c_1-2c_2)z+(c_1-c_2)^2}}{2z(1-z)(c_1^2-c_1)},
\end{eqnarray*}
where the minus sign for the square root term is chosen according to the fact that when $z\to 1$, the limit of $m_1$ should exist, since $\mathring{J}\to -\mathring{H}$.
In a similar vein, by using (\ref{061802}) and (\ref{061803}) again, we obtain
\begin{eqnarray*}
&&R_{e2}(\omega)=\frac{c_1-c_2-\sqrt{(c_1-c_2)^2+4z c_2 \omega}}{2c_2 \omega},\nonumber\\
&&R_{h2}(\omega)=\frac{1-c_1-c_2-\sqrt{(1-c_1-c_2)^2+4(z-1) c_2 \omega}}{2c_2 \omega}.
\end{eqnarray*}
The minus signs in the square root terms above are chosen according to the fact (\ref{062001}). Denoting by $m_2:=s_2(0)$, it follows from (\ref{061802})-(\ref{061803}) that
\begin{eqnarray*}
\frac{c_1-c_2-\sqrt{(c_1-c_2)^2-4z c_2 m_2}}{2c_2 m_2}+\frac{1-c_1-c_2-\sqrt{(1-c_1-c_2)^2-4(z-1) c_2m_2 }}{2c_2 m_2}+\frac{1}{m_2}=0,
\end{eqnarray*}
which implies after elementary calculation that
\begin{eqnarray*}
m_2=\frac{c_1+c_2-2c_1c_2-z+\sqrt{z^2+(4c_1c_2-2c_1-2c_2)z+(c_1-c_2)^2}}{2c_2}.
\end{eqnarray*}
Here the plus sign for the square root term can be confirmed by the extreme case $z\to 1$, which implies $\mathring{\mathcal{J}}\to -\mathring{\mathcal{E}}^{-1}$. Now, setting
\begin{eqnarray*}
f(z)=(1-z)m_1,\quad \varrho(z)=m_2,
\end{eqnarray*}
we can conclude the proof.
\section{Appendix}
In this appendix, we provide the proofs of  Lemmas \ref{lem.071001}, \ref{lem.071604} and \ref{lem.071101}.
\subsection{Proof of Lemma \ref{lem.071001}} At first, we prove (i) and (ii) together. Note that
\begin{eqnarray}
\Phi^\varepsilon-\Phi=\varepsilon z \Phi\Phi^\varepsilon,\quad \Psi^\varepsilon-\Psi=\varepsilon  z \Psi\Psi^\varepsilon.   \label{071120}
\end{eqnarray}
Firstly , we show that $||\Phi||$ and $||\Psi||$ are bounded with high probability on $\mathcal{D}_2$ in the sense of (i). We can write
\begin{eqnarray}
&&\Phi(z)=(E-zS_{\mathbf{ww}})^{-1}=S_{\mathbf{ww}}^{-1/2}(S_{\mathbf{ww}}^{-1/2}ES_{\mathbf{ww}}^{-1/2}-zI_p)^{-1}S_{\mathbf{ww}}^{-1/2}, \nonumber\\
&&\Psi(z)=\mathcal{H}(\mathcal{E}-zS_{\mathbf{yy}})^{-1}\mathcal{E}=\mathcal{H}S_{\mathbf{yy}}^{-1/2}(S_{\mathbf{yy}}^{-1/2}\mathcal{E}S_{\mathbf{yy}}^{-1/2}-zI_q)^{-1}S_{\mathbf{yy}}^{-1/2}\mathcal{E}. \label{070701}
\end{eqnarray}
It is elementary to see that
\begin{eqnarray*}
&&||(S_{\mathbf{ww}}^{-1/2}ES_{\mathbf{ww}}^{-1/2}-zI_p)^{-1}||\leq ||(S_{\mathbf{ww}}^{-1/2}ES_{\mathbf{ww}}^{-1/2}-\Re zI_p)^{-1}||,\nonumber\\
&& ||(S_{\mathbf{yy}}^{-1/2}\mathcal{E}S_{\mathbf{yy}}^{-1/2}-zI_q)^{-1}||\leq ||(S_{\mathbf{yy}}^{-1/2}\mathcal{E}S_{\mathbf{yy}}^{-1/2}-\Re zI_q)^{-1}||.
\end{eqnarray*}
According to (\ref{071502}) and the definitions of $E$ and $\mathcal{E}$, we can easily see that
\begin{eqnarray*}
||S_{\mathbf{ww}}^{-1/2}ES_{\mathbf{ww}}^{-1/2}||=||S_{\mathbf{yy}}^{-1/2}\mathcal{E}S_{\mathbf{yy}}^{-1/2}||\leq d_r+\eta
\end{eqnarray*}
holds with overwhelming probability. Considering $\Re z\geq d_r+\frac{3}{2}\eta$ for $z\in \mathcal{D}_2$, we conclude that 
\begin{eqnarray*}
\sup_{z\in\mathcal{D}_2}||\Phi(z)||\leq C\eta^{-1},\quad \sup_{z\in\mathcal{D}_2}||\Psi(z)||\leq C\eta^{-1}
\end{eqnarray*}
hold with overwhelming probability for some positive constant $C$. 
%
According to (\ref{071120}), we see that for some positive constant $C$,
\begin{eqnarray*}
&||\Phi^\varepsilon||-||\Phi||\leq ||\Phi^\varepsilon-\Phi||\leq C\varepsilon||\Phi||\cdot||\Phi^\varepsilon||,&\nonumber\\ \nonumber\\
&||\Psi^\varepsilon||-||\Psi||\leq ||\Psi^\varepsilon-\Psi||\leq C\varepsilon||\Psi||\cdot ||\Psi^\varepsilon||.&
\end{eqnarray*}
on $\mathcal{D}_2$.
By the discussion above, we see that for some positive constants $C_1$ and $C_2$
\begin{eqnarray*}
&||\Phi^\varepsilon||-||\Phi||\leq C_1 \varepsilon ||\Phi^\varepsilon||,&\nonumber\\ \nonumber\\
&||\Psi^\varepsilon||-||\Psi||\leq C_2\varepsilon ||\Psi^\varepsilon||&
\end{eqnarray*}
hold uniformly on $\mathcal{D}_2$, with overwhelming probability. Choosing $\varepsilon$ small enough such that $C_1\varepsilon, C_2\varepsilon\leq 1/2$, then we can immediately get that
\begin{eqnarray*}
||\Phi^\varepsilon||\leq 2||\Phi||,\quad ||\Psi^\varepsilon||\leq 2||\Psi||
\end{eqnarray*}
hold uniformly on $\mathcal{D}_2$ with overwhelming probability.
Hence, we complete the proof of (i) and (ii).

Now, we provide the deterministic bounds for $||\Phi^\varepsilon||$ and $||\Psi^\varepsilon||$ on $\mathcal{D}_2\setminus\mathcal{D}_3$, i.e., prove (iii). 
At first, analogous to $\mathring{J}$ and $\mathring{\mathcal{J}}$ in (\ref{071130}), we denote 
\begin{eqnarray*}
J\equiv J_n(z):=(1-z)E-zH, \quad \mathcal{J}\equiv \mathcal{J}_n(z):= (1-z)\mathcal{H}^{-1}-z\mathcal{E}^{-1}.
\end{eqnarray*} 
Then, we decompose $J-z\varepsilon I_p$ and $\mathcal{J}-z\varepsilon I_q$ into their real parts and imaginary parts as follows,
\begin{eqnarray*}
J-z\varepsilon I_p&=&[(1-\Re z) E-\Re z H-\Re z \varepsilon I_p]-\sqrt{-1}\Im z[ E+ H + \varepsilon I_p]\nonumber\\
&:=&\mathfrak{R}_1^\varepsilon-\sqrt{-1}\Im z \mathfrak{I}^\varepsilon_1\nonumber\\\nonumber\\
\mathcal{J}-z\varepsilon I_q&=&[(1-\Re z)\mathcal{H}^{-1}-\Re z\mathcal{E}^{-1}-\Re z\varepsilon I_q]-\sqrt{-1}\Im z[\mathcal{H}^{-1}+\mathcal{E}^{-1}+\varepsilon I_q]\nonumber\\
&:=&\mathfrak{R}_2^\varepsilon-\sqrt{-1}\Im z \mathfrak{I}_2^\varepsilon.
\end{eqnarray*}
We further split $\mathcal{D}_2\setminus \mathcal{D}_3$ into two parts,
\begin{eqnarray*}
\mathcal{D}_4=\{z\in\mathcal{D}_2\setminus \mathcal{D}_3: |\Im z|\geq \varepsilon\},\quad \mathcal{D}_5=\{z\in\mathcal{D}_2\setminus \mathcal{D}_3: |\Im z|<\varepsilon\}.
\end{eqnarray*}
Now, note that both $\mathfrak{I}_1^\varepsilon$ and $\mathfrak{I}_2^\varepsilon$ are positive-definte, thus
\begin{eqnarray*}
&&\Phi^\varepsilon=(J-z\varepsilon I_p)^{-1}= (\mathfrak{I}_1^\varepsilon)^{-1/2}((\mathfrak{I}_1^\varepsilon)^{-1/2} \mathfrak{R}_1^\varepsilon (\mathfrak{I}_1^\varepsilon)^{-1/2}
-\sqrt{-1}\Im z I_p)^{-1} (\mathfrak{I}_1^\varepsilon)^{-1/2},\nonumber\\
&&\Psi^\varepsilon=(\mathcal{J}-z\varepsilon I_q)^{-1}=(\mathfrak{I}_2^\varepsilon)^{-1/2}((\mathfrak{I}_2^\varepsilon)^{-1/2} \mathfrak{R}_2^\varepsilon (\mathfrak{I}_2^\varepsilon)^{-1/2}
-\sqrt{-1}\Im z I_q)^{-1} (\mathfrak{I}_2^\varepsilon)^{-1/2}.
\end{eqnarray*}
Note that
\begin{eqnarray*}
\mathfrak{R}_1^\varepsilon=E-\Re z \mathfrak{I}_1^\varepsilon,\quad \mathfrak{R}_2^\varepsilon= \mathcal{H}^{-1}-\Re z I_2^\varepsilon.
\end{eqnarray*}
Hence, by the obvious facts that $(\mathfrak{I}_1^\varepsilon)^{-1/2} E (\mathfrak{I}_1^\varepsilon)^{-1/2}\preceq I_p$ and $(\mathfrak{I}_2^\varepsilon)^{-1/2} \mathcal{H}^{-1} (\mathfrak{I}_2^\varepsilon)^{-1/2}\preceq I_q$, we see that
\begin{eqnarray*}
&&(\mathfrak{I}_1^\varepsilon)^{-1/2} \mathfrak{R}_1^\varepsilon (\mathfrak{I}_1^\varepsilon)^{-1/2}=(\mathfrak{I}_1^\varepsilon)^{-1/2} E (\mathfrak{I}_1^\varepsilon)^{-1/2}-\Re z I_p \preceq (1-\Re z)I_p,\nonumber\\
&&(\mathfrak{I}_2^\varepsilon)^{-1/2} \mathfrak{R}_2^\varepsilon (\mathfrak{I}_2^\varepsilon)^{-1/2}=(\mathfrak{I}_2^\varepsilon)^{-1/2} \mathcal{H}^{-1} (\mathfrak{I}_2^\varepsilon)^{-1/2}-\Re z I_q \preceq (1-\Re z)I_q.
\end{eqnarray*}
Now note that
\begin{eqnarray*}
||((\mathfrak{I}_1^\varepsilon)^{-1/2} \mathfrak{R}_1^\varepsilon (\mathfrak{I}_1^\varepsilon)^{-1/2}
-\sqrt{-1}\Im z I_p)^{-1}||&\leq &\min \{||(\mathfrak{I}_1^\varepsilon)^{-1/2} \mathfrak{R}_1^\varepsilon (\mathfrak{I}_1^\varepsilon)^{-1/2}||^{-1}, |\Im z|^{-1}\}\nonumber\\
&\leq &  \min \{\eta^{-1},\varepsilon^{-1}\}.
\end{eqnarray*}
The first bound is on $\mathcal{D}_5$ and the second is on $\mathcal{D}_4$. Moreover, we see that
\begin{eqnarray*}
||(\mathfrak{I}_1^\varepsilon)^{-1/2}||\leq \varepsilon^{-1/2}.
\end{eqnarray*}
Consequently, 
\begin{eqnarray*}
\sup_{z\in\mathcal{D}_2\setminus\mathcal{D}_3}||\Phi^\varepsilon||\leq \min \{\eta^{-1}\varepsilon^{-1},\varepsilon^{-2}\}.
\end{eqnarray*}
Analogously, we also have
\begin{eqnarray*}
\sup_{z\in\mathcal{D}_2\setminus\mathcal{D}_3}||\Psi^\varepsilon||\leq \min \{\eta^{-1}\varepsilon^{-1},\varepsilon^{-2}\}.
\end{eqnarray*}
Finally, for (iv),  we can actually reproduce the discussion for (iii) verbatim, as long as we replace $J$ and $\mathcal{J}$ by $\mathring{J}$ and $\mathring{\mathcal{J}}$ respectively. Specifically, we also decompose $\mathring{J}$ and $\mathring{\mathcal{J}}$ into their real parts and imaginary parts as follows,
\begin{eqnarray*}
\mathring{J}&=&[(1-\Re z) \mathring{E}-\Re z \mathring{H}]-\sqrt{-1}\Im z[ \mathring{E}+\mathring{H} ]:=\mathring{\mathfrak{R}}_1-\sqrt{-1}\Im z \mathring{\mathfrak{I}}_1\nonumber\\\nonumber\\
\mathring{\mathcal{J}}&=&[(1-\Re z)\mathring{\mathcal{H}}^{-1}-\Re z\mathring{\mathcal{E}}^{-1}]-\sqrt{-1}\Im z[\mathring{\mathcal{H}}^{-1}+\mathring{\mathcal{E}}^{-1}]:=\mathring{\mathfrak{R}}_2-\sqrt{-1}\Im z \mathring{\mathfrak{I}}_2.
\end{eqnarray*}
By construction, it is easy to see that both $||\mathring{\mathfrak{I}}_1^{-1}||$ and $||\mathring{\mathfrak{I}}_2^{-1}||$ are bounded. The remaining discussion is nearly the same as that for (iii). We omit the details.
Hence, we conclude the proof.

\subsection{Proof of Lemma \ref{lem.071604}}
At first, it is well known  the statements in Lemma \ref{lem.071604} hold if we replace $\boldsymbol{\upsilon}_i$ and $\boldsymbol{\upsilon}_j$ by two i.i.d. $N(\mathbf{0}_{n\times 1}, n^{-1}I_n)$-distributed vectors $\mathbf{g}_i$ and $\mathbf{g}_j$.  It is also well known that some (but not all) columns of $\mathcal{U}_n$ can be simultaneously approximated by i.i.d. $N(\mathbf{0}_{n\times 1}, n^{-1}I_n)$-distributed vectors. To be specific, we cite the following result, courtesy of Jiang \cite{Jiang2005}.
\begin{lem} \label{lem.07260001} Let $u_{\alpha\beta}$ be the $(\alpha,\beta)$th entry of $\mathcal{U}_n$. There exist $g_{\alpha\beta}, \alpha,\beta=1,\ldots, n$, which are i.i.d. $N(0,n^{-1})$ variables, such that for $m=1,\ldots, n$, 
\begin{eqnarray*}
\mathbb{P}(\max_{\substack{1\leq \alpha\leq n \\1\leq \beta\leq m}}\sqrt{n}|u_{\alpha\beta}-g_{\alpha\beta}|\geq rs+2t)\leq 4 me^{-nr^2/16}+3mn\left(s^{-1}e^{-s^2/2}+t^{-1}\left(1+\frac{t^2}{3(m+\sqrt{n})}\right)^{-n/2}\right)
\end{eqnarray*}
for any $r\in(0,1/4)$, $s>0$, $t>0$ and $m\leq (r/2) n$.
\end{lem}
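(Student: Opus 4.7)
The plan is to prove the lemma in the spirit of Jiang's construction by exhibiting an explicit Gaussian coupling obtained from the QR (Gram–Schmidt) decomposition of a Gaussian matrix, and then controlling the resulting difference column by column with chi-square and Gaussian tail estimates. Concretely, let $G=(G_{\alpha\beta})_{1\leq\alpha,\beta\leq n}$ be an array of i.i.d.\ $N(0,1)$ entries on the same probability space, define $g_{\alpha\beta}:=n^{-1/2}G_{\alpha\beta}$, and apply Gram–Schmidt to the columns $\mathbf{g}_1,\ldots,\mathbf{g}_n$ of $G$ to produce an orthonormal frame $\mathbf{u}_1,\ldots,\mathbf{u}_n$. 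By the rotational invariance of the standard Gaussian, the orthogonal matrix $\mathcal{U}_n$ so constructed is Haar distributed on $\mathcal{O}(n)$, so this does produce a valid coupling for the statement.

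The next step is to decompose the error column by column. Setting $\tilde{\mathbf{u}}_\beta:=\mathbf{g}_\beta-\sum_{j<\beta}\langle\mathbf{g}_\beta,\mathbf{u}_j\rangle\mathbf{u}_j$, so $\mathbf{u}_\beta=\tilde{\mathbf{u}}_\beta/\|\tilde{\mathbf{u}}_\beta\|$, a direct manipulation gives
$$\sqrt{n}(u_{\alpha\beta}-g_{\alpha\beta})=G_{\alpha\beta}\Bigl(\frac{\sqrt{n}}{\|\tilde{\mathbf{u}}_\beta\|}-1\Bigr)-\frac{\sqrt{n}}{\|\tilde{\mathbf{u}}_\beta\|}\sum_{j<\beta}\langle\mathbf{g}_\beta,\mathbf{u}_j\rangle u_{\alpha j}.$$
So the error splits into a normalization piece and a projection-removal piece. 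By rotational invariance, conditional on $\mathbf{u}_1,\ldots,\mathbf{u}_{\beta-1}$, the squared norm $\|\tilde{\mathbf{u}}_\beta\|^2$ is $\chi^2_{n-\beta+1}$, the scalars $\{\langle\mathbf{g}_\beta,\mathbf{u}_j\rangle\}_{j<\beta}$ are i.i.d.\ $N(0,1)$, and these two are mutually independent.

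The third step is to apply classical tail bounds to each piece and take a union bound over $\alpha\leq n$ and $\beta\leq m$. For the normalization piece: for $\beta\leq m\leq(r/2)n$ with $r\in(0,1/4)$, a Chernoff bound for the chi-square yields $|\sqrt{n}/\|\tilde{\mathbf{u}}_\beta\|-1|\leq r$ off an event of probability at most $2e^{-nr^2/16}$ per column; combined with the Mills-ratio Gaussian bound $\mathbb{P}(|G_{\alpha\beta}|>s)\leq 2s^{-1}e^{-s^2/2}$ this produces a contribution bounded by $rs$ on the good event. For the projection-removal piece: since the row $(u_{\alpha j})_{j<\beta}$ has $\ell^2$-norm at most one (rows of an orthogonal matrix are unit vectors), conditionally $\sum_{j<\beta}\langle\mathbf{g}_\beta,\mathbf{u}_j\rangle u_{\alpha j}$ is centered Gaussian with variance at most one, and upon dividing by $\sqrt{n}/\|\tilde{\mathbf{u}}_\beta\|\leq 2$ one controls it by $2t$; the $(1+t^2/(3(m+\sqrt{n})))^{-n/2}$ prefactor arises precisely from an exponential Markov inequality applied to the chi-square-with-Gaussian ratio, keeping the dimension parameter $n-\beta+1$ visible so that the bound scales correctly in $m+\sqrt{n}$. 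Union bounding over the $n$ values of $\alpha$ and $m$ values of $\beta$ produces the prefactors $4m$ and $3mn$ in the stated inequality.

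The main obstacle is the quantitative packaging: replacing a soft chi-square concentration estimate by the explicit Chernoff/Bennett-style bound that produces the exact factor $(1+t^2/(3(m+\sqrt{n})))^{-n/2}$, and ensuring that the joint handling of the Gaussian inner products and the chi-square normalizer does not lose the $m+\sqrt{n}$ scaling. This requires optimizing the exponential Markov inequality in the conjugate parameter and carefully tracking the three independent sources of randomness — the extremal $|G_{\alpha\beta}|$, the deviation of $\|\tilde{\mathbf{u}}_\beta\|^2$ from $n-\beta+1$, and the Gaussian combination $\sum_{j<\beta}\langle\mathbf{g}_\beta,\mathbf{u}_j\rangle u_{\alpha j}$ — while the union bound is taken simultaneously in $\alpha,\beta$.
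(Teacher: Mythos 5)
The paper does not actually prove this lemma: it is quoted verbatim from Jiang (2005) and used as a black box, so there is no in-paper argument to compare against, and I am judging your attempt on its own terms (equivalently, against Jiang's original proof). Your construction (Gram--Schmidt applied to an i.i.d.\ Gaussian array, which produces a Haar orthogonal matrix coupled to the Gaussians) and the two-term error decomposition
$\sqrt{n}(u_{\alpha\beta}-g_{\alpha\beta})=G_{\alpha\beta}\bigl(\sqrt{n}/\|\tilde{\mathbf{u}}_\beta\|-1\bigr)-(\sqrt{n}/\|\tilde{\mathbf{u}}_\beta\|)\sum_{j<\beta}\langle\mathbf{g}_\beta,\mathbf{u}_j\rangle u_{\alpha j}$
are the correct skeleton, and the conditional distributional facts you list (the $\chi^2_{n-\beta+1}$ norm, the i.i.d.\ $N(0,1)$ projections, their independence) are right.

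The genuine gap is in the projection-removal term, which is where the entire content of the lemma lives. You propose to bound $\sum_{j<\beta}\langle\mathbf{g}_\beta,\mathbf{u}_j\rangle u_{\alpha j}$ by noting that its conditional variance $\sum_{j<\beta}u_{\alpha j}^2$ is at most $1$ and applying a plain Gaussian tail at level $t$. That yields a bound of order $e^{-t^2/2}$ per entry, which is not of the stated form $t^{-1}\bigl(1+t^2/(3(m+\sqrt{n}))\bigr)^{-n/2}$ and, worse, is vacuous precisely in the regime in which the paper applies the lemma, namely $t=n^{-1/2}(\log n)^2\to 0$. The stated factor is a Student-$t$/Beta-type tail: it encodes the fact that the conditional variance $\sum_{j<\beta}u_{\alpha j}^2$ --- which is the squared norm of part of a row of a Haar matrix, hence a $\mathrm{Beta}((\beta-1)/2,(n-\beta+1)/2)$ variable of typical size $(\beta-1)/n$ --- is of order $(m+\sqrt{n})/n$ with sufficiently high probability, so that the Gaussian combination concentrates at scale $\sqrt{(m+\sqrt{n})/n}$ rather than at scale $1$. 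Carrying this out (controlling the ratio statistic jointly and optimizing the exponential Markov bound to produce exactly the exponent $-n/2$ and the constant $3(m+\sqrt{n})$) is the heart of the proof; you explicitly flag it as ``the main obstacle'' but do not execute it, and the concrete step you do supply (variance $\le 1$) discards exactly the structure that makes the bound nontrivial, so the two halves of your paragraph are mutually inconsistent. The constant bookkeeping ($4m$ versus your $2$ per column, $3mn$ versus $2mn$) is likewise asserted rather than verified, though that part is only a matter of constants.
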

Due to symmetry, obviously, $\beta$ can range over any subset of $\{1,\ldots, n\}$ of cardinality $m$ in the above lemma. Now we choose,
\begin{eqnarray*}
r=n^{-1/2}\log n,\quad s=\log n, \quad t=n^{-1/2} (\log n)^2, \quad m=2,
\end{eqnarray*}
we can roughly but safely say that
\begin{eqnarray}
\max_{1\leq \alpha\leq n, \beta=i,j}|u_{\alpha\beta}-g_{\alpha\beta}|\leq n^{-1+\varepsilon} \label{071910}
\end{eqnarray}
with overwhelming probability for any small fixed $\varepsilon>0$. By definition, we know that $\boldsymbol{\upsilon}_\beta=(u_{1\beta},\ldots, u_{n \beta})'$ for $\beta=i,j$. Analogously, we let $\mathbf{g}_\beta=(g_{1\beta},\ldots g_{n \beta})$, with $\beta=i,j$ and $g_{\alpha\beta}$'s are those Gaussian variables in Lemma \ref{lem.07260001}. A direct consequence of (\ref{071910}) is that for any given $\varepsilon>0$, 
\begin{eqnarray}
||\boldsymbol{\upsilon}_\beta-\mathbf{g}_\beta||\leq n^{-1/2+\varepsilon},\quad \beta=i,j\label{071911}
\end{eqnarray}
holds with overwhelming probability.
 Now, observe that 
\begin{eqnarray*}
\boldsymbol{\upsilon}_i'A \boldsymbol{\upsilon}_i=\mathbf{g}'_i A\mathbf{g}_i+ (\boldsymbol{\upsilon}_i'-\mathbf{g}'_i) A\mathbf{g}_i+\mathbf{g}'_i A(\boldsymbol{\upsilon}_i-\mathbf{g}_i)+(\boldsymbol{\upsilon}_i'-\mathbf{g}'_i) A(\boldsymbol{\upsilon}_i-\mathbf{g}_i),
\end{eqnarray*}
from which we have
\begin{eqnarray}
||\boldsymbol{\upsilon}_i'A \boldsymbol{\upsilon}_i-\mathbf{g}'_i A\mathbf{g}_i||\leq 2||A||\cdot||\mathbf{g}_i||\cdot ||\boldsymbol{\upsilon}_i-\mathbf{g}_i||+||A||\cdot||\boldsymbol{\upsilon}_i-\mathbf{g}_i||^2\leq \varepsilon \label{0726200}
\end{eqnarray}
with overwhelming probability for any small constant $\varepsilon>0$, by using (\ref{071911}), together with the assumption that $||A||\leq C$ and the elementary fact that $||\mathbf{g}_i||$ is bounded with overwhelming probability. In a similar manner, on can get
\begin{eqnarray}
||\boldsymbol{\upsilon}_i'A \boldsymbol{\upsilon}_j-\mathbf{g}'_i A\mathbf{g}_j||\leq \varepsilon, \quad ||\boldsymbol{\upsilon}_i'\mathbf{b}-\mathbf{g}_i'\mathbf{b}||\leq \varepsilon \label{0726300}
\end{eqnarray}
with overwhelming probability for any small constant $\varepsilon>0$. Then, as mentioned above, (i)-(iii) do hold if we replace $\boldsymbol{\upsilon}_i$ and $\boldsymbol{\upsilon}_j$ by $\mathbf{g}_i$ and $\mathbf{g}_j$ respectively, hence, according to (\ref{0726200}) and (\ref{0726300}), we can get the conclusion by slightly adjusting the value of $\varepsilon$.

\subsection{Proof of Lemma \ref{lem.071101}}
To provide the concentration for the normalized trace of $\Phi^\varepsilon$ and $\Psi^\varepsilon$, we will rely on the commonly used martingale difference strategy. In the sequel, we will focus on the region $\mathcal{D}_4$, the uniform bound can be extended to the whole $\mathcal{D}_2\setminus\mathcal{D}_3=\mathcal{D}_4\cup\mathcal{D}_5$ via an analogous argument as (\ref{071143}).

Note that both $E$ and $H$ are scaled Wishart matrices. Hence, we can write $E=n^{-1}\sum_{i=1}^q \boldsymbol{\epsilon}_i \boldsymbol{\epsilon}_i'$ and $H=n^{-1}\sum_{i=1}^{n-q}\mathbf{h}_i\mathbf{h}_i'$, where $\boldsymbol{\epsilon}_1,\ldots, \boldsymbol{\epsilon}_q, \mathbf{h}_1,\ldots, \mathbf{h}_{n-q}$ are i.i.d. standard $p$-dimensional normal vectors.  For simplicity, we further set
\begin{eqnarray*}
\boldsymbol{\eta}_{i}=\left\{
\begin{array}{ccc}
\boldsymbol{\epsilon}_i, &\text{for }\quad i=1,\ldots, q\\\\
\mathbf{h}_{i-q}, &\qquad \text{for}\quad i=q+1,\ldots,n.
\end{array}
\right.
\end{eqnarray*}
In addition, let $\Phi_{(k)}^\varepsilon$ be the matrix obtained from $\Phi^\varepsilon$ via replacing $\boldsymbol{\eta}_k$ by $\mathbf{0}_{p\times 1}$. Correspondingly, we define $E_{(k)}$ (resp. $H_{(k)}$) to be the matrix obtained from $E$ (resp. $H$) via replacing $\boldsymbol{\eta}_i$ (resp. $\mathbf{h}_k$) by $\mathbf{0}_{p\times 1}$. Hence, for example, we have the relation $\Phi_{(k)}^\varepsilon=((1-z)E-zH_{(k-p)}-z\varepsilon I_p)^{-1}$ in case $k\geq p+1$. Now, let $\mathbb{E}_k$  be the operator of taking expectation with respect to $\{\boldsymbol{\eta}_{i}\}_{i=1}^k$ and make the convention that $\mathbb{E}_0$ is the identity operator. Then we can write
\begin{eqnarray}
\frac{1}{p}\text{tr}\Phi^\varepsilon(z)-\mathbb{E}\frac{1}{p}\text{tr}\Phi^\varepsilon(z)&=&\frac{1}{p}\sum_{i=1}^{n} (\mathbb{E}_{i-1}-\mathbb{E}_i) \text{tr}\Phi^\varepsilon(z)\nonumber\\
&=&\frac{1}{p}\sum_{i=1}^{n} (\mathbb{E}_{i-1}-\mathbb{E}_i) (\text{tr}\Phi^\varepsilon(z)-\text{tr}\Phi_{(i)}^\varepsilon(z)) \label{071144}
\end{eqnarray}
Now we calculate the one step difference. We just state the details for the case of $i\in\{1,\ldots,q\}$, the other case, $i\in\{q+1,\ldots,2q\}$, is just analogous. We start from the fact
\begin{eqnarray}
|\text{tr}\Phi^\varepsilon(z)-\text{tr}\Phi_{(i)}^\varepsilon(z)|&=&\left|\frac{n^{-1}(1-z)\boldsymbol{\eta}_i'(\Phi_{(i)}^\varepsilon(z))^2\boldsymbol{\eta}_i}{1+n^{-1}(1-z)\boldsymbol{\eta}_i'\Phi_{(i)}^\varepsilon(z)\boldsymbol{\eta}_i}\right|
\nonumber\\
&\leq&  \frac{|\boldsymbol{\eta}_i'(\Phi_{(i)}^\varepsilon(z))^2\boldsymbol{\eta}_i|}{|n\Im z/|1-z|^2+\Im (\boldsymbol{\eta}_i'\Phi_{(i)}^\varepsilon(z)\boldsymbol{\eta}_i)|}.
\label{0726400}
\end{eqnarray}
Now, we show that
\begin{eqnarray}
|\boldsymbol{\eta}_i'(\Phi_{(i)}^\varepsilon(z))^2\boldsymbol{\eta}_i|\leq |\boldsymbol{\eta}_i'\Phi_{(i)}^\varepsilon(z)\Phi_{(i)}^\varepsilon(\bar z)\boldsymbol{\eta}_i|. \label{071141}
\end{eqnarray}
To see this, we denote $\Phi_{(i)}^\varepsilon(z):=A+\sqrt{-1}B$, where $A$ and $B$ are the real and imaginary parts of $\Phi_{(i)}$ respectively. Denoting by $\mathfrak{R}_{1,(i)}^\varepsilon$ (resp. $\mathfrak{I}^\varepsilon_{1,(i)}$) the matrix obtained from $\mathfrak{R}^\varepsilon_1$ (resp. $\mathfrak{I}^\varepsilon_1$) via replacing $\boldsymbol{\eta}_i$ by $\mathbf{0}_{p\times 1}$, we have
\begin{eqnarray*}
\Phi_{(i)}^\varepsilon(z)=(\mathfrak{R}_{1,(i)}^\varepsilon-\sqrt{-1}\Im z \mathfrak{I}^\varepsilon_{1,(i)})^{-1}=(\mathfrak{I}^\varepsilon_{1,(i)})^{-1/2}\mathcal{I}^\varepsilon_{1,(i)}(z) (\mathfrak{I}^\varepsilon_{1,(i)}))^{-1/2},
\end{eqnarray*}
where we used the notation
\[\mathcal{I}^\varepsilon_{1,(i)}(z):=((\mathfrak{I}^\varepsilon_{1,(i)})^{-1/2} \mathfrak{R}_{1,(i)}^\varepsilon (\mathfrak{I}^\varepsilon_{1,(i)})^{-1/2}
-\sqrt{-1}\Im z I_p)^{-1}.\]
 Then, it is not difficult to see from the spectral decomposition of  $\mathcal{I}^\varepsilon_{1,(i)}(z)$ that both $A$ and $B$ are symmetric, and $\Phi_{(i)}^\varepsilon(\bar{z})=A-\sqrt{-1}B$. 
 Now, briefly writing $\boldsymbol{\eta}_i$ as $\boldsymbol{\eta}$, we need to compare
\begin{eqnarray*}
\text{l.h.s. of (\ref{071141})}=|\boldsymbol{\eta}'(A+\sqrt{-1}B)^2\boldsymbol{\eta}|^2=(\boldsymbol{\eta}'A^2\boldsymbol{\eta}-\boldsymbol{\eta}' B^2\boldsymbol{\eta}')^2+(\boldsymbol{\eta}'AB\boldsymbol{\eta}+\boldsymbol{\eta}'BA\boldsymbol{\eta})^2
\end{eqnarray*}
and
\begin{eqnarray*}
\text{r.h.s. of (\ref{071141})}=|\boldsymbol{\eta}'(A+\sqrt{-1}B)(A-\sqrt{-1}B)\boldsymbol{\eta}|^2=(\boldsymbol{\eta}'A^2\boldsymbol{\eta}+\boldsymbol{\eta}' B^2\boldsymbol{\eta}')^2+(\boldsymbol{\eta}'AB\boldsymbol{\eta}-\boldsymbol{\eta}'BA\boldsymbol{\eta})^2.
\end{eqnarray*}
Then, it is direct to see 
\begin{eqnarray*}
\text{r.h.s. of (\ref{071141})}-\text{l.h.s. of (\ref{071141})}=4(\boldsymbol{\eta}'A^2\boldsymbol{\eta}\cdot \boldsymbol{\eta}' B^2\boldsymbol{\eta}'-\boldsymbol{\eta}'AB\boldsymbol{\eta}\cdot \boldsymbol{\eta}'BA\boldsymbol{\eta})\geq 0
\end{eqnarray*}
by Cauchy-Schwarz inequality. Hence, we deduce from (\ref{0726400}) and (\ref{071141}) that
\begin{eqnarray}
|\text{tr}\Phi^\varepsilon(z)-\text{tr}\Phi_{(i)}^\varepsilon(z)|\leq
 \frac{|\boldsymbol{\eta}_i'\Phi_{(i)}^\varepsilon(z)\Phi_{(i)}^\varepsilon(\bar z)\boldsymbol{\eta}_i|}{|n\Im z/|1-z|^2+\Im (\boldsymbol{\eta}_i'\Phi_{(i)}^\varepsilon(z)\boldsymbol{\eta}_i)|}\leq \frac{|\boldsymbol{\eta}_i'\Phi_{(i)}^\varepsilon(z)\Phi_{(i)}^\varepsilon(\bar z)\boldsymbol{\eta}_i|}{|\Im (\boldsymbol{\eta}_i'\Phi_{(i)}^\varepsilon(z)\boldsymbol{\eta}_i)|}. \label{070804}
\end{eqnarray}
Here, in the last step we used the fact that $\Im (\boldsymbol{\eta}_i'\Phi_{(i)}^\varepsilon(z)\boldsymbol{\eta}_i)$ share the same sign with $\Im z$. 
Now, we arrive at the stage to bound the r.h.s. of (\ref{070804}).  Note that $\mathcal{I}^\varepsilon_{1,(i)}(\bar{z})=\overline{\mathcal{I}^\varepsilon_{1,(i)}(z)}$, thus
\begin{eqnarray*}
&&|\boldsymbol{\eta}_i'\Phi_{(i)}^\varepsilon(z)\Phi_{(i)}^\varepsilon(\bar z)\boldsymbol{\eta}_i|= \boldsymbol{\eta}_i'(\mathfrak{I}^\varepsilon_{1,(i)})^{-1/2}(\mathcal{I}_{1,(i)}^\varepsilon(z))^{-1} (\mathfrak{I}^\varepsilon_{1,(i)})^{-1}((\mathcal{I}_{1,(i)}^\varepsilon(\bar{z}))^{-1}
(\mathfrak{I}^\varepsilon_{1,(i)})^{-1/2}\boldsymbol{\eta}_i\nonumber\\
&& \leq ||(\mathfrak{I}^\varepsilon_{1,(i)})^{-1}||\cdot \boldsymbol{\eta}_i'(\mathfrak{I}^\varepsilon_{1,(i)})^{-1/2}|\mathcal{I}_{1,(i)}^\varepsilon(z)|^{-2} (\mathfrak{I}^\varepsilon_{1,(i)})^{-1/2}\boldsymbol{\eta}_i\nonumber\\
&& =| z|^{-1}||(\mathfrak{I}^\varepsilon_{1,(i)})^{-1}||\cdot \boldsymbol{\eta}_i'(\mathfrak{I}^\varepsilon_{1,(i)})^{-1/2}\Im (\mathcal{I}_{1,(i)}^\varepsilon(z)) (\mathfrak{I}^\varepsilon_{1,(i)})^{-1/2}\boldsymbol{\eta}_i.
\end{eqnarray*}
Moreover, obviously, we have
\begin{eqnarray*}
\Im (\boldsymbol{\eta}_i'\Phi_{(i)}^\varepsilon(z)\boldsymbol{\eta}_i)=\boldsymbol{\eta}_i'(\mathfrak{I}^\varepsilon_{1,(i)})^{-1/2}\Im (\mathcal{I}_{1,(i)}^\varepsilon(z)) (\mathfrak{I}^\varepsilon_{1,(i)})^{-1/2}\boldsymbol{\eta}_i.
\end{eqnarray*}
Consequently, we have
\begin{eqnarray*}
\sup_{z\in\mathcal{D}_4}|\text{tr}\Phi^\varepsilon(z)-\text{tr}\Phi_{(i)}^\varepsilon(z)|\leq \sup_{z\in\mathcal{D}_4}|\Im z|^{-1}||(\mathfrak{I}^\varepsilon_{1,(i)})^{-1}||\leq \varepsilon^{-2}.
\end{eqnarray*}
So on $\mathcal{D}_4$, we can bound the one step difference as above. Now, we go back to (\ref{071144}), and use a complex version of Burkholder's inequality for martingale difference sequence (see Lemma 2.11 of \cite{BS2009}), then we can get that for any positive number $\ell\geq 1/2$, 
\begin{eqnarray*}
\mathbb{E}|p^{-1}\text{tr}\Phi^\varepsilon(z)-p^{-1}\mathbb{E}\text{tr}\Phi^\varepsilon(z)|^{2\ell}\leq C_\ell p^{-2\ell}\mathbb{E}(\sum_{i=1}^{n} |(\mathbb{E}_{i-1}-\mathbb{E}_i) (\text{tr}\Phi^\varepsilon(z)-\text{tr}\Phi_{(i)}^\varepsilon(z))|^2)^{\ell}\leq C'_\ell\varepsilon^{-4\ell} p^{-\ell}
\end{eqnarray*}
for some positive constants $C_\ell$ and $C'_\ell$, in case $z\in \mathcal{D}_4$. Then Markov's inequality implies
\begin{eqnarray*}
\mathbb{P}(|p^{-1}\text{tr}\Phi^\varepsilon(z)-p^{-1}\mathbb{E}\text{tr}\Phi^\varepsilon(z)|\geq \varepsilon)\leq C'_\ell\varepsilon^{-6\ell} p^{-\ell}
\end{eqnarray*}
for any $\ell>\frac 12$ and  $z\in \mathcal{D}_4$. The uniform bound on $\mathcal{D}_4$ can be obtained via a standard lattice argument as that for (\ref{072504}), and the further extension to the whole $\mathcal{D}_2\setminus\mathcal{D}_3$ can be derived by an discussion similar to (\ref{071143}), as mentioned above. We just leave the details to the reader.

Now, we turn to $\Psi^\varepsilon$.  Again, we use the martingale difference argument. To this end, we write $\mathcal{E}=n^{-1}\sum_{i=1}^p\tilde{\boldsymbol{\varepsilon}}_i\tilde{\boldsymbol{\varepsilon}}_i$ and $\mathcal{H}=n^{-1}\sum_{i=1}^{n-p}\tilde{\mathbf{h}}_i\tilde{\mathbf{h}}_i'$, where $\tilde{\boldsymbol{\epsilon}}_1,\ldots, \tilde{\boldsymbol{\epsilon}}_p, \tilde{\mathbf{h}}_1,\ldots, \tilde{\mathbf{h}}_{n-p}$ are i.i.d. standard $q$-dimensional normal vectors. And for simplicity, we set
\begin{eqnarray*}
\tilde{\boldsymbol{\eta}}_i=\left\{
\begin{array}{ccc}
\tilde{\boldsymbol{\epsilon}}_i, &\text{for }\quad i=1,\ldots, p\\\\
\tilde{\mathbf{h}}_{i-p}, &\qquad \text{for}\quad i=p+1,\ldots,n.
\end{array}
\right.
\end{eqnarray*}
Similarly, let $\Psi_{(k)}^\varepsilon$ be the matrix obtain from $\Psi^\varepsilon$ via replacing $\tilde{\boldsymbol{\eta}}_k$ by $\mathbf{0}_{q\times 1}$. Analogously, we can define $\tilde{H}_{(k)}$ and $\tilde{E}_{(k)}$. Now, let $\tilde{\mathbb{E}}_k$  be the operator of taking expectation with respect to $\{\tilde{\boldsymbol{\eta}}_{i}\}_{i=1}^k$ and make the convention that $\tilde{\mathbb{E}}_0$ is the identity operator. Then we can write
\begin{eqnarray*}
\frac{1}{q}\text{tr}\Psi^\varepsilon(z)-\mathbb{E}\frac{1}{q}\text{tr}\Psi^\varepsilon(z)&=&\frac{1}{q}\sum_{i=1}^{n} (\tilde{\mathbb{E}}_{i-1}-\tilde{\mathbb{E}}_i) \text{tr}\Psi^\varepsilon(z)\nonumber\\
&=&\frac{1}{q}\sum_{i=1}^{n} (\tilde{\mathbb{E}}_{i-1}-\tilde{\mathbb{E}}_i) (\text{tr}\Psi^\varepsilon(z)-\text{tr}\Psi_{(i)}^\varepsilon(z))
\end{eqnarray*}
Similar to the discussion on $\Phi^\varepsilon$, it suffices to bound the one step difference. 
For simplicity, we only state the estimation for the case of $i\in\{1,\ldots,p\}$. Note that
\begin{eqnarray}
|\text{tr}\Psi^\varepsilon(z)-\text{tr}\Psi_{(i)}^\varepsilon(z)|&=&|z|\left|\frac{\tilde{\boldsymbol{\eta}}_i' \mathcal{E}_{(i)}^{-1}\Psi^\varepsilon(z)\Psi_{(i)}^\varepsilon(z) \mathcal{E}_{(i)}^{-1}\tilde{\boldsymbol{\eta}}_i}{1+\tilde{\boldsymbol{\eta}}_i' \mathcal{E}_{(i)}^{-1}\tilde{\boldsymbol{\eta}}_i}\right| \label{071401}
\end{eqnarray}
Now, denoting analogously
\begin{eqnarray*}
\Psi_{(i)}^\varepsilon(z)&=&(\mathfrak{R}_{2,(i)}^\varepsilon-\sqrt{-1}\Im z \mathfrak{I}^\varepsilon_{2,(i)})^{-1}\nonumber\\
&=&(\mathfrak{I}^\varepsilon_{2,(i)})^{-1/2}((\mathfrak{I}^\varepsilon_{2,(i)})^{-1/2} \mathfrak{R}_{2,(i)}^\varepsilon (\mathfrak{I}^\varepsilon_{2,(i)})^{-1/2}
-\sqrt{-1}\Im z I_p)^{-1} (\mathfrak{I}^\varepsilon_{2,(i)}))^{-1/2}\nonumber\\
&:=&(\mathfrak{I}^\varepsilon_{2,(i)})^{-1/2}\mathcal{I}^\varepsilon_{2,(i)}(z)(\mathfrak{I}^\varepsilon_{2,(i)})^{-1/2},
\end{eqnarray*}
It is not difficult to see that
\begin{eqnarray}
|\tilde{\boldsymbol{\eta}}_i' \mathcal{E}_{(i)}^{-1}\Psi^\varepsilon(z)\Psi_{(i)}^\varepsilon(z) \mathcal{E}_{(i)}^{-1}\tilde{\boldsymbol{\eta}}_i|\leq  ||\Psi^\varepsilon(z)||\cdot ||\mathcal{I}^\varepsilon_{2,(i)}(z)||\cdot  \tilde{\boldsymbol{\eta}}_i' \mathcal{E}_{(i)}^{-1}(\mathfrak{I}^\varepsilon_{2,(i)})^{-1}\mathcal{E}_{(i)}^{-1}\tilde{\boldsymbol{\eta}}_i. \label{0726500}
\end{eqnarray}
 Note that both $||\Psi^\varepsilon(z)||$ and $||\mathcal{I}^\varepsilon_{2,(i)}(z)||$ are bounded on $\mathcal{D}_2\setminus\mathcal{D}_3$. Moreover, it is obvious that 
\begin{eqnarray*}
\mathcal{E}_{(i)}^{-1/2}(\mathfrak{I}^\varepsilon_{2,(i)})^{-1}\mathcal{E}_{(i)}^{-1/2}\preceq I_q,
\end{eqnarray*}
which together with (\ref{0726500}) implies that 
\begin{eqnarray*}
|\tilde{\boldsymbol{\eta}}_i' \mathcal{E}_{(i)}^{-1}\Psi^\varepsilon(z)\Psi_{(i)}^\varepsilon(z) \mathcal{E}_{(i)}^{-1}\tilde{\boldsymbol{\eta}}_i|\leq C\tilde{\boldsymbol{\eta}}_i' \mathcal{E}_{(i)}^{-1}\tilde{\boldsymbol{\eta}}_i
\end{eqnarray*}
for some positive constant $C$, on $\mathcal{D}_2\setminus\mathcal{D}_3$. Plugging this bound into (\ref{071401}) yields 
\begin{eqnarray*}
|\text{tr}\Psi^\varepsilon(z)-\text{tr}\Psi_{(i)}^\varepsilon(z)|\leq C
\end{eqnarray*}
for some positive constant $C$, on $\mathcal{D}_2\setminus\mathcal{D}_3$. The remaining discussion is similar to that for $\Phi^\varepsilon$, hence omitted.
Therefore, we conclude the proof.

\end{document}